\newtheorem{theorem}{Theorem}
\newtheorem{lemma}[theorem]{Lemma}
\newtheorem{corollary}[theorem]{Corollary}
\theoremstyle{definition}
\newtheorem{remark}[theorem]{Remark}
\newtheorem{definition}[theorem]{Definition}
\numberwithin{equation}{section}
\numberwithin{theorem}{section}
\author{Bent Fuglede}
\address{Department of Mathematical Sciences, University of Copenhagen, Universitetsparken 5,
2100 Copenhagen, Denmark}
\email{fuglede@math.ku.dk}
\author{Natalia Zorii}
\address{Institute of Mathematics of
National Academy of Sciences of Ukraine, Tereshchenkivska 3, 01601,
Kyiv-4, Ukraine}
\email{natalia.zorii@gmail.com}
\thanks{This paper under the title "Green kernels associated with Riesz kernels" will appear in Annales Academi\ae\ Scientiarum Fennic\ae, Mathematica 43, 2018}
\begin{document}

\title[Riesz kernels and the associated Green kernels]{Balayage for Riesz kernels with application to potential theory for the associated Green kernels}

\begin{abstract}
We study properties of the $\alpha$-Green kernel $g_D^\alpha$ of order $0<\alpha\leqslant2$ for a domain $D\subset\mathbb R^n$, $n\geqslant3$. This kernel is associated with the Riesz kernel $|x-y|^{\alpha-n}$, $x,y\in\mathbb R^n$, in a manner particularly well known in the case $\alpha=2$. Besides the usual principles of potential theory, we establish for the $\alpha$-Green kernel the property of consistency. This allows us to prove the completeness of the cone of positive measures $\mu$ on $D$ with finite energy $g_D^\alpha(\mu,\mu):=\iint g_D^\alpha(x,y)\,d\mu(x)\,d\mu(y)$ in the topology defined by the energy norm $\|\mu\|_{g_D^\alpha}=\sqrt{g_D^\alpha(\mu,\mu)}$, as well as the existence of the $\alpha$-Green equilibrium measure for a relatively closed set in $D$ of finite $\alpha$-Green capacity. The main tool is a generalization of Cartan's theory of balayage (sweeping) for the Newtonian kernel to the $\alpha$-Riesz kernels with $0<\alpha<2$.
\end{abstract}
\maketitle

\section{Introduction}\label{sec1}

The $\alpha$-Riesz kernel $\kappa_\alpha$ of order $0<\alpha<n$, given by $\kappa_\alpha(x,y):=|x-y|^{\alpha-n}$, $x,y\in\mathbb R^n$, was studied first by M.~Riesz \cite{R}, see also Landkof \cite{L}.
Throughout this paper we assume that $0<\alpha\leqslant2$ and $n\geqslant3$, $n\in\mathbb N$.

The main purpose of the present paper is to study properties of the associated $\alpha${\it -Green\/} kernel $g_D^\alpha$ on a domain $D\subset\mathbb R^n$. The kernel $g_D^\alpha(x,y)$ is obtained from the $\alpha$-Riesz kernel $|x-y|^{\alpha-n}$ by subtracting the compensating term, which for given $y\in D$ is $\alpha$-harmonic for $x\in D$ and essentially agrees with the $\alpha$-Riesz kernel off $D$. We show that $g_D^\alpha$ has the basic properties of the classical Green kernel on $D$ (where $\alpha=2$ and $D$ typically is regular in the sense of the solvability of the classical Dirichlet problem). Besides the complete maximum principle in a form which includes Frostman's maximum principle and the domination principle, we establish the energy principle and the property of consistency which were known before in the classical case only. Consistency is a property related to the completeness of the cone of positive measures $\mu$ on $D$ with finite energy $g_D^\alpha(\mu,\mu):=\iint g_D^\alpha(x,y)\,d\mu(x)\,d\mu(y)$ in the topology defined by the energy norm $\|\mu\|_{g_D^\alpha}=\sqrt{g_D^\alpha(\mu,\mu)}$, cf.\ \cite{Fu1}, and it allows us for example to prove the existence of the $\alpha$-Green equilibrium measure for a relatively closed set in $D$ of finite $\alpha$-Green capacity. The results obtained generalize those for the Riesz kernel (see e.g. \cite{L}), corresponding to the case where the $\alpha$-Riesz capacity of the complement $D^c:=\mathbb R^n\setminus D$ equals $0$. Although the theory of $\alpha$-Green potentials recently has obtained an interesting development based on probabilistic arguments, see e.g. \cite{Chen,Ku}, the above-mentioned results are new, and they are obtained in the framework of the classical potential-analytic approach.

Our main tool is the development of H.~Cartan's \cite{Ca2} and Landkof's \cite{L} ideas concerning $\alpha$-Riesz balayage of Radon measures onto closed sets in $\mathbb R^n$. We chiefly draw on Cartan's work, though formulated for $\alpha=2$, because the corresponding results in \cite{L} have not all been completely justified (see Section~\ref{adequate1} below for details). For our purpose, where energy of measures plays a key role, such a generalization is only in part available in the setting of balayage spaces \cite{BH} or $H$-cones \cite{BBC}. In particular, the book on balayage spaces by Bliedtner and Hansen \cite{BH} studies thoroughly restrictions to open subsets, corresponding here to the case of $\alpha$-Green kernels, and contains a section on the $\alpha$-Riesz kernels. However, the notion of energy, decisive for our main results, has been excluded in \cite{BH}, cf.\ the Introduction therein. We have therefore chosen to adopt throughout a classical approach to balayage relative to a function kernel.

In the next Section~\ref{sec-basic} we recall some well-known notions and results from the general theory of potentials of real-valued (signed) Radon measures on a locally compact Hausdorff space $X$ relative to a positive, symmetric, lower semicontinuous (l.s.c.) {\it{kernel\/}} $\kappa: X\times X\to[0,+\infty]$. We denote by $\kappa\mu(x):=\int\kappa(x,y)\,d\mu(y)$ the {\it potential\/} of a signed Radon measure $\mu$ relative to the kernel $\kappa$ (whenever defined).

Such a kernel is said to be {\it positive definite\/} if, for every signed Radon measure $\mu$, the {\it energy\/} $\kappa(\mu,\mu):=\iint\kappa(x,y)\,d\mu(x)\,d\mu(y)$ is ${}\geqslant0$ whenever defined. Then the set $\mathcal E_\kappa$ of all signed measures of finite energy forms a pre-Hilbert space with the energy norm $\|\mu\|_\kappa:=\sqrt{\kappa(\mu,\mu)}$ and the associated inner product, called the {\it mutual energy\/}. In addition to the energy norm topology (also called the {\it strong\/} topology) on $\mathcal E_\kappa$, we have the {\it vague\/} topology, even on all of $\mathfrak M=\mathfrak M(X)$, the linear space of all real-valued signed Radon measures on~$X$. A net $(\mu_s)$ on $\mathfrak M$ converges vaguely to $\mu\in\mathfrak M$ if and only if $\int f\,d\mu_s\to\int f\,d\mu$ for every $f\in C_0(X)$, $C_0(X)$ being the space of all continuous functions on $X$ with compact support.\footnote{When speaking of a continuous numerical function we understand that the values are {\it finite\/} real numbers.}

A positive definite kernel $\kappa$ is said to be {\it consistent\/} if, for every positive measure $\mu$ of finite energy $\|\mu\|_{\kappa}^2=\kappa(\mu,\mu)$, the mutual energy $\kappa(\mu,\nu):=\iint\kappa(x,y)\,d\mu(x)\,d\nu(y)$ is vaguely continuous as a function of the positive measure $\nu$ of energy norm $\|\nu\|_\kappa\leqslant1$. Equivalently, every strong Cauchy sequence on the cone $\mathcal E_\kappa^+$ of positive measures of finite energy converges in that topology to any of its vague cluster points, \cite{Fu1,Fu2}.

In Section~\ref{sec2} we consider the $\alpha$-Riesz kernels $\kappa_\alpha$, $0<\alpha\leqslant2$, on $\mathbb R^n$, with frequent reference to~\cite{L}. We develop the theory of $\alpha$-Riesz balayage of a positive measure $\mu\in\mathfrak M(\mathbb R^n)$ onto a closed set $A\subset\mathbb R^n$, being based mainly on the ideas of Cartan for $\alpha=2$ \cite{Ca2}. The results obtained often seem to coincide with those from~\cite{L}, but in fact they are different, being based on different definitions and hence being obtained by different methods. For example, there is the integral representation
\[\mu^A=\int\varepsilon_y^A\,d\mu(y)\] of the balay\'ee $\mu^A$ of $\mu$ onto $A$ in terms of the balay\'ees $\varepsilon_y^A$ of the unit Dirac measures $\varepsilon_y$. In the literature
this integral representation seems to have been more or less taken for granted, though it has been pointed out in \cite[p.~18, Remarque]{Bou} that it requires that the family $(\varepsilon_y^A)_{y\in\mathbb R^n}$ is $\mu${\it -adequate\/} in the sense of \cite[Section~3, D\'efinition~1]{Bou}; see also counterexamples (without $\mu$-adequacy) in Exercises~1 and~2 at the end of that section. We therefore bring in Section~\ref{adequate1} a proof of this adequacy for $\mu$ carried by $\mathbb R^n\setminus A$.

Having proved the integral representation, we are now in a position to use the relation between $\varepsilon_y^A$ and the $\alpha$-Riesz equilibrium measure $\gamma_{A^*}$ of $A^*$, the inverse of $A$ relative to the unit sphere. The $\alpha$-Riesz equilibrium measure $\gamma_{A^*}$ is treated in the extended sense where it might have infinite $\alpha$-Riesz energy, cf.\ \cite[Chapter~V, Section~1, n$^\circ$\,1]{L}), while the relation between $\varepsilon_y^A$ and $\gamma_{A^*}$ is given with the aid of the Kelvin transform, cf.\ the proof of Theorem~\ref{equiv}. This approach enables us to establish the equivalence of $\alpha$-thinness of $A$ at infinity (alternatively, the equivalence of the existence of the $\alpha$-Riesz equilibrium measure $\gamma_A$ on $A$, treated in the extended sense) with the existence of a non-zero bounded positive measure carried by $\mathbb R^n\setminus A$ for which the total mass decreases strictly under sweeping on $A$ (Theorem~\ref{bal-mass-th}). This ties up with the principle of positivity of mass (Theorem~\ref{cor-mass'}), and will be used in subsequent work of the authors.

The final Section~\ref{sec4} on the $\alpha$-Green kernels, $0<\alpha\leqslant2$, is the main part of the paper. Just as the usual Green function on a domain $D$ in $\mathbb R^n$ is the difference between the fundamental harmonic function and its balay\'ee on the complement $D^c$, the $\alpha$-Green kernel $g_D^\alpha$ on $D$ was introduced and studied by Riesz \cite[Chapter~IV]{R} as follows (see also \cite[p.~263]{L}):
\[g^\alpha_D(x,y)=\kappa_\alpha\varepsilon_y(x)-\kappa_\alpha\varepsilon_y^{D^c}(x)\quad
\text{for all \ }x,y\in D.\]
As easily shown, $g^\alpha_D$ is l.s.c., $>0$, and infinite on the diagonal $x=y$. It is essentially known that $g_D^\alpha$ is symmetric: $g_D^\alpha(x,y)=g_D^\alpha(y,x)$.  Seemingly new properties are (as partly indicated above): $g_D^\alpha$ satisfies the
complete maximum principle (in a form which includes the Frostman maximum principle and the domination principle), the Evans--Vasilesco continuity principle, and the principle of positivity of mass. Finally, $g_D^\alpha$ is `{\it perfect\/}', that is, it satisfies the energy principle (Theorem~\ref{th-pos-def}) and it is consistent (Theorem~\ref{th-cons}), which allows us to show that every relatively closed subset $F$ of $D$ of finite $g_D^\alpha$-capacity has a unique $\alpha$-Green equilibrium measure $\gamma_F$ carried by $F$. We also provide a description of the properties of the $\alpha$-Green equilibrium potential $g^\alpha_D\gamma_F$ and single out its characteristic properties (Theorem~\ref{th-equi}).

\section{Basic notions of potential theory on locally compact spaces}\label{sec-basic}
\subsection{Measures, energies, potentials, capacities}\label{sec-ker}

Given a locally compact (Hausdorff) space $X$, we denote by $\mathfrak M=\mathfrak M(X)$ the linear space of all real-val\-ued (signed) Radon measures $\mu$ on $X$, equipped with the {\it vague\/} (weak$^*$) topology, i.e.\ the topology of pointwise convergence on the space $C_0(X)$ of all continuous functions on $X$ with compact support. The vague topology on $\mathfrak M$ is Hausdorff; hence, a vague limit of any sequence (net) in $\mathfrak M$ is unique (whenever it exists).
These and other notions and results from the theory of measures and integration on a locally compact space, to be used throughout the paper, can be found in \cite{B2,Bou,E2} (see also \cite{Fu1} for a short survey).

We denote by $\mu^+$ and $\mu^-$ the {\it positive\/} and {\it negative} parts, respectively, in the Hahn--Jor\-dan decomposition of a measure $\mu\in\mathfrak M$, by $|\mu|:=\mu^++\mu^-$ its total variation, and by $S^\mu_{X}=S(\mu)$ its (closed) {\it support\/}. A measure $\mu$ is said to be {\it bounded\/} if $|\mu|(X)<+\infty$. Let $\mathfrak M^+=\mathfrak M^+(X)$ stand for the (convex, vaguely closed) cone of all positive $\mu\in\mathfrak M$.

The following well-known fact (see, e.g., \cite[Section~1.1]{Fu1}) will often be used.

\begin{lemma}\label{lemma-semi}Let\/ $\psi:\ X\to(-\infty,+\infty]$ be a lower semicontinuous\/ {\rm(}l.s.c.{\rm)} function which is\/~${}\geqslant0$ unless\/ $X$ is compact. Then\/ $\mu\mapsto\int\psi\,d\mu$ is l.s.c.\ on\/ $\mathfrak M^+$ in the\/ {\rm(}induced\/{\rm)} vague topology.\end{lemma}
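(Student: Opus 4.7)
The strategy is to reduce to the case $\psi \geq 0$ and then express $\mu \mapsto \int \psi \, d\mu$ as a pointwise supremum of vaguely continuous functionals on $\mathfrak{M}^+$; since the pointwise supremum of a family of l.s.c.\ functions is l.s.c., the conclusion will follow at once.

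For the principal case $\psi \geq 0$, the key step is the approximation
\[
\psi(x) = \sup\{f(x) : f \in \mathcal{F}\}, \qquad \mathcal{F} := \{f \in C_0(X) : 0 \leq f \leq \psi\},
\]
valid on any locally compact $X$ for any non-negative l.s.c.\ $\psi$; this rests on Urysohn's lemma, applied at each point $x$ with $\psi(x)>0$ to produce an $f \in \mathcal{F}$ with $f(x)$ arbitrarily close to (but below) $\psi(x)$. The family $\mathcal{F}$ is stable under finite $\max$, hence upward directed. By the standard monotone convergence theorem for upward directed families of positive l.s.c.\ integrands against a positive Radon measure,
\[
\int \psi \, d\mu \;=\; \sup_{f \in \mathcal{F}} \int f \, d\mu \qquad \text{for every } \mu \in \mathfrak{M}^+(X).
\]
For each fixed $f \in C_0(X)$, the map $\mu \mapsto \int f \, d\mu$ is vaguely continuous by the very definition of the vague topology. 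The right-hand side therefore exhibits $\mu \mapsto \int \psi \, d\mu$ as a pointwise supremum of vaguely continuous functions on $\mathfrak{M}^+$, whence the stated l.s.c.\ property. When instead $X$ is compact, $\psi$ may take negative values but is bounded below by a finite constant $c := \inf_X \psi$; writing $\psi = (\psi - c) + c$ with $\psi - c \geq 0$ still l.s.c., one obtains $\int \psi \, d\mu = \int (\psi - c) \, d\mu + c \cdot \mu(X)$, and since $1 \in C(X) = C_0(X)$ makes $\mu \mapsto \mu(X)$ vaguely continuous, the result is the sum of a l.s.c.\ function of $\mu$ and a continuous one, hence l.s.c.

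The only non-mechanical ingredient is the directed-supremum identity $\int \psi \, d\mu = \sup_{\mathcal{F}} \int f \, d\mu$. For countable monotone sequences it is the usual monotone convergence theorem; the extension to upward directed families of positive l.s.c.\ integrands against a Radon measure is classical (see e.g.\ \cite{Bou}), and once that is granted the rest of the argument is routine.
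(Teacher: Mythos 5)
Your proof is correct and is essentially the standard argument that the paper relies on by citing \cite[Section~1.1]{Fu1}: for positive l.s.c.\ $\psi$ the upper integral is, in Bourbaki's framework, exactly $\sup\{\mu(f):f\in C_0(X),\ 0\leqslant f\leqslant\psi\}$, a pointwise supremum of vaguely continuous functionals, and the compact case reduces to this by adding the constant $\inf_X\psi>-\infty$. No issues.
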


By a (positive function) {\it kernel\/} $\kappa$ on $X$ we mean a symmetric l.s.c.\ function $\kappa: X\times X\to[0,+\infty]$ which is strictly positive on the diagonal: $\kappa(x,x)>0$ for all $x\in X$. For (signed Radon) measures $\mu,\nu$ on $X$ the {\it potential\/} $\kappa\mu$ and the {\it mutual energy\/} $\kappa(\mu,\nu)$ are defined by\footnote{When introducing notation about numerical quantities we assume
the corresponding object on the right to be well-defined~--- as a finite real number or~$\pm\infty$.}
\[\kappa\mu(x):=\int\kappa(x,y)\,d\mu(y),\quad\kappa(\mu,\nu):=\iint\kappa(x,y)\,d\mu(x)\,d\nu(y),\]
respectively. For $\mu=\nu$,
the mutual energy $\kappa(\mu,\nu)$ becomes the {\it energy\/} $\kappa(\mu,\mu)$ of $\mu$.
Observe that $\kappa\mu(x)$, $\mu\in\mathfrak M$, is well defined provided $\kappa\mu^+(x)$ and $\kappa\mu^-(x)$ are not both infinite, and then $\kappa\mu(x)=\kappa\mu^+(x)-\kappa\mu^-(x)$. In particular, if $\mu\geqslant0$, then $\kappa\mu$ is defined everywhere and represents a positive l.s.c.\ function on~$X$, cf.\ Lemma~\ref{lemma-semi} above.
Also note that $\kappa(\mu,\nu)$, $\mu,\nu\in\mathfrak M$, is well defined provided
$\kappa(\mu^+,\nu^+)+\kappa(\mu^-,\nu^-)$ or $\kappa(\mu^+,\nu^-)+\kappa(\mu^-,\nu^+)$ is finite.
Let $\mathcal E_\kappa=\mathcal E_\kappa(X)$ consist
of all $\mu\in\mathfrak M$ with $-\infty<\kappa(\mu,\mu)<+\infty$, the latter by definition means that $\kappa(\mu^+,\mu^+)$, $\kappa(\mu^-,\mu^-)$ and $\kappa(\mu^+,\mu^-)$ are all finite (see \cite[Section~2.1]{Fu1}).

For a set $Q\subset X$, let $\mathfrak M^+(Q)$ consist of all $\mu\in\mathfrak M^+$ {\it concentrated on\/} (or {\it carried by\/}) $Q$, which means that $X\setminus Q$ is locally $\mu$-negligible, or equivalently that $Q$ is $\mu$-measurable and $\mu=\mu_Q$ where $\mu_Q$ denotes the trace (restriction) of $\mu$ on $Q$. If $Q$ is closed then $\mu\in\mathfrak M^+$ is concentrated on $Q$ if and only if $S(\mu)\subset Q$. Also note that if either $X$ is {\it countable at infinity\/} (i.e.\ $X$ can be represented as a countable union of compact sets), or $\mu$ is bounded,  then the concept of local $\mu$-negligibility coincides with that of $\mu$-negligibility; and hence $\mu\in\mathfrak M^+(Q)$ if and only if $\mu^*(X\setminus Q)=0$, $\mu^*(\cdot)$ being the {\it outer measure\/} of a set. We denote by $\mathfrak M^+(Q,q)$, $q\in(0,+\infty)$, the (convex) subcone of $\mathfrak M^+(Q)$ consisting of all $\mu$ with $\mu(Q)=q$. Also write $\mathcal E_\kappa^+(Q,q):=\mathcal E_\kappa\cap\mathfrak M^+(Q,q)$, $\mathcal E_\kappa^+(Q):=\mathcal E_\kappa\cap\mathfrak M^+(Q)$, and $\mathcal E_\kappa^+:=\mathcal E_\kappa^+(X)$.

In contrast to \cite{Fu4,Fu5} where a capacity has been treated as a functional acting on positive numerical functions on $X$, in the present study we consider the (standard) concept of capacity as a set function. Thus the ({\it inner\/}) {\it capacity\/} of a set $Q$ relative to the kernel $\kappa$, denoted $c_\kappa(Q)$, is defined as\footnote{Here and in what follows the
infimum over the empty set is taken to be~$+\infty$. We put
$1\bigl/(+\infty)=0$ and $1\bigl/0=+\infty$.}
\begin{equation}\label{cap}1\bigl/c_\kappa(Q):=\inf_{\mu\in\mathcal E_\kappa^+(Q,1)}\,\kappa(\mu,\mu).\end{equation}
Obviously, $0\leqslant c_\kappa(Q)\leqslant+\infty$. Furthermore, by \cite[p.~153, Eq.~2]{Fu1},
\begin{equation}\label{compact}c_\kappa(Q)=\sup\,c_\kappa(K)\quad(K\subset Q, \ K\text{\ compact}).\end{equation}

Throughout the paper, we shall often use the fact that $c_\kappa(Q)=0$ if and only if $\mu_*(Q)=0$ for every $\mu\in\mathcal E_\kappa^+$, $\mu_*(\cdot)$ being the {\it inner measure\/} of a set; cf.\ \cite[Lemma~2.3.1]{Fu1}.

As in \cite[p.\ 134]{L}, we call a measure $\mu\in\mathfrak M^+$ $c_\kappa${\it -absolutely continuous\/} if $\mu(K)=0$ for every compact set $K\subset X$ with $c_\kappa(K)=0$. It follows from (\ref{compact}) that, for such $\mu$, $\mu_*(Q)=0$ whenever $c_\kappa(Q)=0$. Hence every $\mu\in\mathcal E_\kappa^+$ is $c_\kappa$-absolutely continuous, but not conversely, cf.\ \cite[pp.~134--135]{L}.

\subsection{Potential-theoretic principles. Consistency}\label{sec-pr} Among the variety of poten\-tial-theore\-tic principles investigated for example in the comprehensive work by Ohtsuka~\cite{O} (see also the references therein), in the present study we focus mainly on the following four:
 \begin{itemize}\item[\rm(i)] A kernel~$\kappa$ is said to satisfy the {\it continuity principle\/} (Evans--Vasilesco), or to be {\it regular\/} (Choquet \cite{Ch1}) if, for any $\mu\in\mathfrak M^+$ with compact~$S^\mu_{X}$, the potential $\kappa\mu$ is continuous throughout~$X$ whenever its restriction to $S^\mu_{X}$ is continuous.
 \item[\rm(ii)] A kernel $\kappa$ is said to satisfy {\it Frostman's maximum principle\/} if, for any $\mu\in\mathfrak M^+$ with compact support,
 \[\sup_{x\in X}\,\kappa\mu(x)=\sup_{x\in S^\mu_{X}}\,\kappa\mu(x).\]
\item[\rm(iii)] A kernel $\kappa$ is said to satisfy the {\it complete maximum principle} (introduced by Cartan and Deny \cite{CD}) if, for any $\mu\in\mathcal E^+_\kappa$ and $\nu\in\mathfrak M^+$ such that $\kappa\mu\leqslant\kappa\nu+c$ $\mu$-a.e., where $c\geqslant0$ is a constant, the same inequality holds everywhere on $X$.
\item[\rm(iv)] A kernel $\kappa$ is called {\it positive definite\/} if $\kappa(\mu,\mu)\geqslant0$ for every (signed) measure $\mu\in\mathcal E_\kappa$. And $\kappa$ is said to be {\it strictly positive definite\/}, or to satisfy the {\it energy principle\/} if in addition $\kappa(\mu,\mu)>0$ except if $\mu=0$.
\end{itemize}

The complete maximum principle with $c=0$ is called the {\it domination principle\/} (introduced by Cartan \cite{C0} under the name {\it second maximum principle\/}).

The above-mentioned principles are not completely independent of one another. In particular, every kernel satisfying the Frostman maximum principle or the domination principle is positive definite, \cite{N,Ch3}. And for a kernel which is finite off the diagonal and continuous in the extended sense on $X\times X$ we have $\text{\rm(ii)}\Rightarrow\text{\rm(i)}$, see \cite{O1}, \cite{O2}, \cite[Eq.~1.3]{O}, and independently \cite{Ch1}.

In the rest of this section, $\kappa$ is assumed to be positive definite. Then $\mathcal E_\kappa$ forms a pre-Hil\-bert space with the energy seminorm $\|\mu\|_\kappa:=\sqrt{\kappa(\mu,\mu)}$ and the inner product $\kappa(\mu,\nu)$ (see~\cite{Fu1}). The topology
on $\mathcal E_\kappa$ defined by the energy seminorm $\|\cdot\|_\kappa$ is called the {\it strong topology\/}. Clearly, $\|\cdot\|_\kappa$ is a norm if and only if $\kappa$ is strictly positive definite.

Write $\mathcal E^\circ_\kappa:=\bigl\{\mu\in\mathcal E_\kappa^+:\ \|\mu\|_\kappa\leqslant1\bigr\}$. Since $\kappa$ is strictly positive on the diagonal, $\mathcal E_\kappa^\circ$ is {\it vaguely compact\/}, cf.\ \cite[Lemma~2.5.1]{Fu1}.

\begin{definition}\label{def-cons}A (positive definite) kernel $\kappa$ is said to be {\it consistent\/} if, for every $\lambda\in\mathcal E_\kappa^+$, the function $\mu\mapsto\kappa(\lambda,\mu)$, $\mu\in\mathcal E_\kappa^\circ$, is vaguely continuous.\end{definition}

This is property (CW) in \cite[Lemma~3.4.1]{Fu1}. It has been shown in \cite{Fu2} that (CW) is equivalent to the property~(C) of consistency as defined in~\cite{Fu1}:
\begin{itemize}
\item[\rm(C)] Every strong Cauchy sequence in $\mathcal E_\kappa^+$ converges strongly to any of its vague cluster
points.\end{itemize}

\begin{definition}A (positive definite) kernel $\kappa$ is said to be {\it perfect\/} if it is consistent and strictly positive definite; or equivalently, if
$\mathcal E_\kappa^+$ is strongly complete and the strong topology on $\mathcal E_\kappa^+$ is finer than the induced vague topology on $\mathcal E_\kappa^+$ (see \cite[p.~166 and Theorem~3.3]{Fu1}).\end{definition}

\begin{remark}Even for a perfect kernel $\kappa$ the whole pre-Hilbert space $\mathcal E_\kappa$ is, in general, strongly {\it incomplete\/}, and this is the case also for the Coulomb kernel $|x-y|^{-1}$ on $\mathbb R^3$, $|x-y|$ being the Euclidean distance between $x$ and $y$
(cf.~\cite{Ca}). Compare with \cite[Theorem~1]{ZUmzh} where the strong completeness has been established for the metric subspace of all signed measures $\nu\in\mathcal E_{\kappa_\alpha}(\mathbb R^n)$, $n\geqslant3$, such that $\nu^+$ and $\nu^-$ are supported by closed nonintersecting sets $A_1,A_2\subset\mathbb R^n$; here $\kappa_\alpha(x,y)=|x-y|^{\alpha-n}$ is the $\alpha$-Riesz kernel of order $\alpha\in(0,n)$. This result from \cite{ZUmzh} has been proved with the aid of Deny's  theorem \cite{D1} stating that $\mathcal E_{\kappa_\alpha}(\mathbb R^n)$ can be completed by making use of tempered distributions on $\mathbb R^n$ with finite
$\alpha$-Riesz energy.\end{remark}

The property of consistency (or perfectness) is particularly useful in minimum energy problems over subclasses of~$\mathcal E_\kappa$. E.g., if $Q$ is a closed set with $c_\kappa(Q)\in(0,+\infty)$ and $\kappa$ is a consistent kernel, then the infimum in (\ref{cap}) is an actual minimum. If, moreover, $\kappa$ is perfect, then the corresponding minimizing measure is unique. See \cite[Theorem~4.1]{Fu1}.

\section{$\alpha$-Riesz sweeping in $\mathbb R^n$}\label{sec2}
Throughout this section we fix $n>2$, $n\in\mathbb N$, and $\alpha\in(0,2]$, and consider the $\alpha$-Riesz kernel $\kappa_\alpha$. We shall simply write $\alpha$ instead of $\kappa_\alpha$ if it serves as an index. For example, $c_\alpha(\cdot)$ denotes the $\alpha$-Riesz inner capacity of a set. In all that follows, `{\it n.e.}' ({\it nearly everywhere\/}) means that a proposition involving a variable point holds everywhere except for a subset with $c_\alpha(\cdot)=0$.\footnote{To be precise, one should write `$c_\alpha$-n.e.' instead of `n.e.', but for the sake of brevity we shall always use the latter short form.
This will not cause any misunderstanding, for the order $\alpha$ of the Riesz kernel is fixed.}

We denote by $\omega_{\mathbb R^n}$ the {\it Alexandroff point\/}  of $\mathbb R^n$, and write
$\overline{\mathbb R^n}:=\mathbb R^n\cup\{\omega_{\mathbb R^n}\}$.

When speaking of a positive (Radon) measure $\mu\in\mathfrak M^+=\mathfrak M^+(\mathbb R^n)$ on $\mathbb R^n$, we always assume that $\kappa_\alpha\mu\not\equiv+\infty$.
This implies that
\begin{equation}\label{1.3.10}\int_{|y|>1}\,\frac{d\mu(y)}{|y|^{n-\alpha}}<+\infty,\end{equation}
cf.\ \cite[Eq.~1.3.10]{L}, and consequently, $\kappa_\alpha\mu$ is finite n.e.\ on $\mathbb R^n$, cf.\ \cite[Chap\-ter~III, Section~1]{L}; these two implications can actually be reversed. Under these (permanent) requirements, $\kappa_\alpha$ satisfies all the principles (i)--(iv) from Section~\ref{sec-pr} and it is perfect; see Theorems 1.7, 1.10, 1.15, 1.18, 1.27 and 1.29 in~\cite{L}.

Throughout this section $A$ is a {\it closed\/} proper subset of $\mathbb R^n$. To avoid triviality, we shall always assume that $c_\alpha(A)>0$.

\subsection{$\alpha$-thinness at $y\in\overline{\mathbb R^n}$. $\alpha$-Riesz equilib\-rium measure in an extended sense}\label{sec-thin}
A point $y\in A$ is said to be
$\alpha${\it -irregular\/} if and only if $A$ is $\alpha${\it -thin\/} at $y$, that is, $A$ is thin at $y$ in the sense of Brelot \cite{Br-Pisa}, applied to the cone of all positive $\alpha$-superharmonic functions on $\mathbb R^n$ (together with the constant function $+\infty$). All others points of $A$ are said to be $\alpha${\it -reg\-ular\/}. See also \cite[Chapter~V, Section~3, n$^\circ$\,9]{L}. Regarding the notion of $\alpha${\it -superharmonic\/} function, see \cite[Chapter~I, Section~6, n$^\circ$\,20]{L}.

Alternatively, by Wiener's criterion \cite[Theorem~5.2]{L}, $y\in A$ is $\alpha$-ir\-reg\-ular if and only if
\begin{equation}\label{W}\sum_{k\in\mathbb N}\,\frac{c_\alpha(A_k)}{q^{k(n-\alpha)}}<+\infty,\end{equation}
where $q\in(0,1)$ and $A_k:=A\cap\bigl\{x\in\mathbb R^n:\ q^{k+1}\leqslant|x-y|<q^k\bigr\}$.
Denote by $A_I=A_{I,\alpha}$ the set of all $\alpha$-irregular points of $A$; then $A_I\subset\partial_{\mathbb R^n}A$ and $c_\alpha(A_I)=0$, cf.\ \cite[Lemma~5.2]{L}.

It follows from the perfectness of $\kappa_\alpha$ and Frostman's maximum principle that, for any (closed) set $A\subset\mathbb R^n$ with $c_\alpha(A)<+\infty$, there exists a unique {\it equilibrium measure\/} $\gamma_A=\gamma_{A,\alpha}\in\mathcal E_\alpha^+(A)$ on~$A$, possessing the following properties:
\begin{align}\label{sec2-1}\|\gamma_A\|_\alpha^2&=\gamma_A(A)=c_\alpha(A),\\
\label{sec2-2}\kappa_\alpha\gamma_A&=1\quad\text{n.e.\ on \ }A,\\
\label{sec2-3}\kappa_\alpha\gamma_A&\leqslant1\quad\text{everywhere on \ }\mathbb R^n,\\
\label{sec2-4}\kappa_\alpha\gamma_A&=\min_{\theta\in\Theta_A}\,\kappa_\alpha\theta,
\end{align}
where $\Theta_A$ consists of all $\theta\in\mathfrak M^+(\mathbb R^n)$ with $\kappa_\alpha\theta\geqslant1$ n.e.\ on~$A$;
see \cite[Theorem~4.1]{Fu1}, \cite[Theorem 2.6]{L} and also \cite[Lemma 4.5]{L}.

We extend the notion of $\alpha$-Riesz equilibrium measure to an (unbounded closed) set $A$ with $c_\alpha(A)=+\infty$. Following~\cite{Z2} (or~\cite{Br} for $\alpha=2$), we call $A$ $\alpha${\it -thin\/} at the Alexandroff point $\omega_{\mathbb R^n}$ if the inverse $A^*$ of $A\cup\{\omega_{\mathbb R^n}\}$ relative to the $(n-1)$-dim\-en\-sional unit sphere $S(y,1)$ centered at $y\in\mathbb R^n$ is $\alpha$-thin at $y$ as defined above, or equivalently if either $y\not\in A^*$ or $y$ is an $\alpha$-irregular point of $A^*$, cf.\ \cite[Theorem~5.10]{L}. The notion of $\alpha$-thinness of $A$ at $\omega_{\mathbb R^n}$ does not depend on the choice of~$y$, cf.~\cite{Z2}. If (and only if) $A$ is $\alpha$-thin at $\omega_{\mathbb R^n}$ there exists $\gamma_A\in\mathfrak M^+(A)$ possessing the properties (\ref{sec2-2})--(\ref{sec2-4}) (see \cite[Chapter~V, Section~1, n$^\circ$\,1]{L}). It is clear from the proof of \cite[Theorems~5.1]{L} that such $\gamma_A$ is $c_\alpha$-absolutely continuous and hence it is unique (cf.\ \cite[p.~178, Remark]{L}). Furthermore, (\ref{sec2-1}) also holds in the sense that all its three terms are~$+\infty$.\footnote{Equality (\ref{sec2-2}) in fact holds everywhere on $A\setminus A_I$, cf.\ Corollary~\ref{reg-com}.}

\subsection{$\alpha$-Riesz sweeping: definition and statements on existence and uniqueness}
Throughout this section, fix $\mu\in\mathfrak M^+$. We first consider the case where $\mu$ has finite energy.

\begin{theorem}\label{th-bala-f}
For\/ $\mu\in\mathcal E^+_\alpha$ and\/ $A$ closed in\/ $\mathbb R^n$ there exists\/ $\mu^A\in\mathcal E_\alpha^+(A)$ such that
\begin{align}\label{eq-bala-f1}
\kappa_\alpha\mu^A&=\kappa_\alpha\mu\quad\text{n.e.\ on \ }A,\\
\kappa_\alpha\mu^A&\leqslant\kappa_\alpha\mu\quad\text{everywhere\ on \ }\mathbb R^n.\label{eq-bala-f2}
\end{align}
Such\/ $\mu^A$ is actually the orthogonal projection of\/ $\mu$ in the pre-Hilbert space\/ $\mathcal E_\alpha$ onto the convex cone\/ $\mathcal E^+_\alpha(A)$, i.e.\footnote{See, e.g., \cite[Chapter~III, Sections~8--10]{Ca2} and \cite[Proposition~1.12.4]{E2}.}
\[\|\mu-\mu^A\|_\alpha<\|\mu-\nu\|_\alpha\quad\text{for all \ }\nu\in\mathcal E^+_\alpha(A), \ \nu\ne\mu^A,\]
and\/ $\mu^A$ is determined uniquely within\/ $\mathcal E_\alpha^+(A)$ by relation\/~{\rm(\ref{eq-bala-f1})}.
\end{theorem}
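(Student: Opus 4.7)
The plan is to realize $\mu^A$ as the orthogonal projection of $\mu$ onto the convex cone $\mathcal E_\alpha^+(A)\subset\mathcal E_\alpha$ and to translate the resulting variational characterization into the pointwise relations (\ref{eq-bala-f1}), (\ref{eq-bala-f2}) using the perfectness of $\kappa_\alpha$ and the domination principle. Existence of the projection is obtained as follows: for a minimizing sequence $(\nu_k)\subset\mathcal E_\alpha^+(A)$ of $\|\mu-\cdot\|_\alpha^2$, the parallelogram identity applied to the midpoints $(\nu_k+\nu_\ell)/2\in\mathcal E_\alpha^+(A)$ forces $\|\nu_k-\nu_\ell\|_\alpha\to 0$. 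Since $\kappa_\alpha$ is perfect, $(\nu_k)$ converges strongly in $\mathcal E_\alpha^+$ and, by consistency, also vaguely to some $\mu^A\in\mathcal E_\alpha^+$; closedness of $A$ keeps $\mu^A$ carried by $A$. The strict inequality $\|\mu-\mu^A\|_\alpha<\|\mu-\nu\|_\alpha$ for $\nu\in\mathcal E_\alpha^+(A)$, $\nu\ne\mu^A$, then follows from the parallelogram identity combined with the energy principle.

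Next I would extract variational inequalities by testing with competitors $\mu^A+t\sigma$ ($\sigma\in\mathcal E_\alpha^+(A)$, $t\geqslant 0$) and $(1-t)\mu^A$ ($t\in[0,1]$); first-order optimality yields
\[\kappa_\alpha(\mu^A,\sigma)\geqslant\kappa_\alpha(\mu,\sigma)\quad\text{for all}\ \sigma\in\mathcal E_\alpha^+(A),\qquad \kappa_\alpha(\mu,\mu^A)=\|\mu^A\|_\alpha^2.\]
If the set $E_j:=\{x\in A:\kappa_\alpha\mu(x)\geqslant\kappa_\alpha\mu^A(x)+1/j\}$ had positive inner capacity for some $j\in\mathbb N$, then a nonzero $\sigma\in\mathcal E_\alpha^+(E_j)$, obtained from a compact subset of positive capacity, would contradict the first inequality; hence $\kappa_\alpha\mu^A\geqslant\kappa_\alpha\mu$ n.e.\ on $A$, and therefore $\mu^A$-a.e.\ by $c_\alpha$-absolute continuity of $\mu^A\in\mathcal E_\alpha^+$. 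The energy identity then forces $\kappa_\alpha\mu^A=\kappa_\alpha\mu$ $\mu^A$-a.e.

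The main obstacle is upgrading this $\mu^A$-a.e.\ identity to the global bound (\ref{eq-bala-f2}), for which I invoke the domination principle (Section~\ref{sec-pr}(iii) with $c=0$) with $\mu^A\in\mathcal E_\alpha^+$ and $\mu\in\mathfrak M^+$: the condition $\kappa_\alpha\mu^A\leqslant\kappa_\alpha\mu$ $\mu^A$-a.e.\ propagates to all of $\mathbb R^n$, yielding (\ref{eq-bala-f2}) and, together with $\kappa_\alpha\mu^A\geqslant\kappa_\alpha\mu$ n.e.\ on $A$, also (\ref{eq-bala-f1}). Uniqueness of $\mu^A$ within $\mathcal E_\alpha^+(A)$ under (\ref{eq-bala-f1}) alone is then immediate: any two solutions $\mu_1,\mu_2$ satisfy $\kappa_\alpha(\mu_1-\mu_2)=0$ n.e.\ on $A$, hence $\mu_i$-a.e.\ for $i=1,2$, so $\|\mu_1-\mu_2\|_\alpha^2=\int\kappa_\alpha(\mu_1-\mu_2)\,d(\mu_1-\mu_2)=0$ and $\mu_1=\mu_2$ by the energy principle.
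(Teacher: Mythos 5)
Your proof is correct and follows essentially the approach the paper itself invokes: realizing $\mu^A$ as the orthogonal projection of $\mu$ onto the convex cone $\mathcal E_\alpha^+(A)$ (using perfectness of $\kappa_\alpha$ for existence of the projection and the domination principle to upgrade the $\mu^A$-a.e.\ inequality to (\ref{eq-bala-f2})). The paper merely cites Cartan, Landkof and Fuglede for this argument, whereas you have supplied the details, all of which check out.
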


\begin{proof} Indeed, since $\kappa_\alpha$ is perfect, this can be obtained by generalizing arguments from \cite[Chapter~IV, Section~5, n$^\circ$\,22]{L} (cf.\ also \cite[pp.~243--244]{Ca2} for $\alpha=2$;  in \cite{Ca2,L}, $A=K$ was assumed to be compact). Actually, this has been proved more generally in a locally compact space $X$ for any quasiclosed set  and any perfect kernel $\kappa$ satisfying $\kappa$-domination principle, cf.\ \cite[Theorem~4.12]{Fu5}.\end{proof}

\begin{remark}One could equally well write `q.e.' (quasi everywhere) instead of `n.e.' in relation~(\ref{eq-bala-f1}), where `q.e.' refers to the {\it outer\/} $\alpha$-Riesz capacity of a set, \cite[p.~143]{L}. Indeed,  $\psi:=\kappa_\alpha\mu^A-\kappa_\alpha\mu$, being the difference between two l.s.c.\ functions, is Borel measurable, which yields that the set $\{x\in A:\ \psi(x)\ne0\}$ is $\kappa_\alpha${\it-cap\-acit\-able\/} (see~\cite[Theorem~30.1]{Ch0}, \cite[Theorem~4.5]{Fu1}, \cite[Theorem~2.8]{L}). A similar remark applies to relation~(\ref{sec2-2}) as well.\end{remark}

\begin{corollary}\label{rest-fin}Let\/ $F$ be a closed subset of\/ $A$ with\/ $c_\alpha(F)>0$. Then, in the notations of the preceding theorem,
\begin{equation}\label{muFA}\mu^F=(\mu^A)^F\quad\text{for every \ }\mu\in\mathcal E^+_\alpha.\end{equation}
\end{corollary}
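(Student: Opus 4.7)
The plan is to rely on the uniqueness clause in Theorem~\ref{th-bala-f}: within $\mathcal E_\alpha^+(F)$, the balay\'ee of any $\lambda\in\mathcal E_\alpha^+$ onto $F$ is characterized by the single requirement that its potential agree with $\kappa_\alpha\lambda$ nearly everywhere on~$F$. So to prove \eqref{muFA} it will suffice to verify that the candidate $(\mu^A)^F$ lies in $\mathcal E_\alpha^+(F)$ and satisfies $\kappa_\alpha(\mu^A)^F=\kappa_\alpha\mu$ n.e.\ on~$F$; uniqueness then forces it to coincide with $\mu^F$.

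The first step is to observe that $\mu^A\in\mathcal E_\alpha^+(A)\subset\mathcal E_\alpha^+$, so Theorem~\ref{th-bala-f} may be applied a second time, now to the measure $\mu^A$ and the closed set $F$. This produces $(\mu^A)^F\in\mathcal E_\alpha^+(F)$ together with the identity
\[\kappa_\alpha(\mu^A)^F=\kappa_\alpha\mu^A\quad\text{n.e.\ on \ }F.\]
The second step is to invoke Theorem~\ref{th-bala-f} for $\mu$ and $A$, which gives
\[\kappa_\alpha\mu^A=\kappa_\alpha\mu\quad\text{n.e.\ on \ }A,\]
and therefore in particular n.e.\ on the subset $F\subset A$.

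Combining these two relations yields $\kappa_\alpha(\mu^A)^F=\kappa_\alpha\mu$ nearly everywhere on $F$; here one only uses that the union of two sets of inner $\alpha$-Riesz capacity zero still has capacity zero, which is immediate from (\ref{compact}) and the countable subadditivity of $c_\alpha$ on Borel sets (equivalently, from Lemma~2.3.1 of~\cite{Fu1} quoted in Section~\ref{sec-basic}). The uniqueness statement at the end of Theorem~\ref{th-bala-f} now identifies $(\mu^A)^F$ with $\mu^F$, proving~\eqref{muFA}.

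I do not expect any genuine obstacle: the argument is a formal composition of two applications of the projection characterization, and the only thing to watch is that the exceptional n.e.\ sets behave well under finite union, which is standard. No use of the integral representation, of $\mu$-adequacy, or of the Kelvin transform is required for this corollary.
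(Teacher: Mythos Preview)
Your argument is correct and matches the paper's own proof essentially line for line: both apply Theorem~\ref{th-bala-f} twice to obtain $(\mu^A)^F\in\mathcal E_\alpha^+(F)$ with $\kappa_\alpha(\mu^A)^F=\kappa_\alpha\mu^A=\kappa_\alpha\mu$ n.e.\ on~$F$, and then invoke the uniqueness clause of that theorem. Your added remark about the union of two n.e.\ exceptional sets is the only elaboration beyond what the paper writes, and it is indeed routine.
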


\begin{proof}By Theorem \ref{th-bala-f}, both $\mu^F$ and $(\mu^A)^F$ belong to $\mathcal E_\alpha^+(F)$ and moreover
\[\kappa_\alpha(\mu^A)^F=\kappa_\alpha\mu^A=\kappa_\alpha\mu\quad\text{n.e.\ on \ }F.\]
Since relation (\ref{eq-bala-f1}) (for $F$ instead of $A$) determines $\mu^F$ uniquely among $\mathcal E_\alpha^+(F)$, identity (\ref{muFA}) follows.\end{proof}

Likewise as it has been done in the paragraph before \cite[Theorem 4.16$'$]{L}, (\ref{eq-bala-f1}) implies for $A$ closed
\begin{equation}\label{fine}
\kappa_\alpha(\mu^A,\lambda)=\kappa_\alpha(\mu,\lambda^A)\quad\text{for all \ }\mu,\lambda\in\mathcal E^+_\alpha.
 \end{equation}

Next, we follow Cartan \cite[p.\ 257]{Ca2} and use the symmetry relation (\ref{fine}) to define sweeping of an arbitrary $\mu\in\mathfrak M^+$.

\begin{definition}\label{def-swpt}For\/ $\mu\in\mathfrak M^+$ we call\/ $\mu^A\in\mathfrak M^+(A)$ a {\it balay\'ee\/} of\/ $\mu$ onto\/~$A$ if
\begin{equation}\label{alternative}
\kappa_\alpha(\mu^A,\lambda)=\kappa_\alpha(\mu,\lambda^A)\quad\text{for all \ }\lambda\in\mathcal E^+_\alpha,
 \end{equation}
where for every\/ $\lambda\in\mathcal E^+_\alpha$, $\lambda^A\in\mathcal E^+_\alpha(A)$ is determined uniquely by Theorem\/~{\rm\ref{th-bala-f}}.\end{definition}

\begin{remark}\label{ba-finite}In view of this definition and relation~(\ref{fine}) the measure $\mu^A\in\mathcal E_\alpha^+(A)$ from Theorem~\ref{th-bala-f} may now be called the {\it balay\'ee\/} of $\mu\in\mathcal E^+_\alpha$ onto $A$.\end{remark}

\begin{theorem}\label{th-bala-2}For any\/ $\mu\in\mathfrak M^+$ there exists a unique balay\'ee\/ $\mu^A\in\mathfrak M^+(A)$.
Furthermore, such\/ $\mu^A$ satisfies both relations\/ {\rm(\ref{eq-bala-f1})} and\/ {\rm(\ref{eq-bala-f2})}.
\end{theorem}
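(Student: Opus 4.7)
The plan is to define $\mu^A$ as the monotone limit of balayées of finite-energy approximants, then verify the defining duality relation. To this end, I construct an increasing sequence $(\mu_k) \subset \mathcal{E}^+_\alpha$ with $\mu_k \uparrow \mu$ by setting $\nu_k := \mathbf{1}_{B(0,k)}\mu$ and $\mu_k := \mathbf{1}_{\{\kappa_\alpha \nu_k \leq k\}}\nu_k$, so each $\mu_k$ is bounded with potential bounded by $k$ on its support, hence of finite energy. Local integrability of $\kappa_\alpha\mu$, a standard consequence of (\ref{1.3.10}), guarantees that $\mu_k \uparrow \mu$ at least up to the $c_\alpha$-polar set $\{\kappa_\alpha\mu = +\infty\}$.

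By Theorem~\ref{th-bala-f}, each balayée $\mu_k^A \in \mathcal{E}^+_\alpha(A)$ is well defined. Balayage is additive on $\mathcal{E}^+_\alpha$: if $\nu = \nu_1 + \nu_2$ with $\nu_i \in \mathcal{E}^+_\alpha$, then $\nu_1^A + \nu_2^A \in \mathcal{E}^+_\alpha(A)$ has potential equal to $\kappa_\alpha\nu$ n.e.\ on $A$, so by uniqueness in Theorem~\ref{th-bala-f} it coincides with $\nu^A$; hence $(\mu_k^A)$ is monotonically increasing. To see that the limit is a Radon measure I establish local uniform boundedness: given a compact $K$, choose $\lambda \in \mathcal{E}^+_\alpha$ of compact support with $\kappa_\alpha\lambda \geq 1$ on $K$ and $\kappa_\alpha(\mu,\lambda) < \infty$ (possible by local integrability of $\kappa_\alpha\mu$; take for instance a suitable scalar multiple of Lebesgue measure restricted to a ball containing $K$). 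Using (\ref{fine}) and $\mu_k \leq \mu$,
\[\mu_k^A(K) \leq \int \kappa_\alpha\lambda\,d\mu_k^A = \kappa_\alpha(\mu_k^A,\lambda) = \kappa_\alpha(\mu_k,\lambda^A) \leq \kappa_\alpha(\mu,\lambda^A) \leq \kappa_\alpha(\mu,\lambda) < \infty,\]
so $\mu^A := \sup_k \mu_k^A$ is a Radon measure in $\mathfrak{M}^+(A)$.

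The required properties now follow by monotone convergence. Pointwise on $\mathbb{R}^n$, $\kappa_\alpha\mu_k^A \uparrow \kappa_\alpha\mu^A$ and $\kappa_\alpha\mu_k \uparrow \kappa_\alpha\mu$, so (\ref{eq-bala-f2}) is inherited from $\kappa_\alpha\mu_k^A \leq \kappa_\alpha\mu_k$ at each level, and (\ref{eq-bala-f1}) is inherited from $\kappa_\alpha\mu_k^A = \kappa_\alpha\mu_k$ n.e.\ on $A$ (the countable union of the exceptional $c_\alpha$-null sets being still $c_\alpha$-null). For every $\lambda \in \mathcal{E}^+_\alpha$, monotone convergence applied to both sides of the identity $\kappa_\alpha(\mu_k^A,\lambda) = \kappa_\alpha(\mu_k,\lambda^A)$ furnished by (\ref{fine}) yields (\ref{alternative}) in $[0,+\infty]$. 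For uniqueness, if $\nu_1,\nu_2 \in \mathfrak{M}^+(A)$ both satisfy (\ref{alternative}), then testing with $\lambda = f\,dx$ for $f \in C_c^+(\mathbb{R}^n)$ (which lies in $\mathcal{E}^+_\alpha$ since $\kappa_\alpha$ is locally integrable) and applying Fubini gives $\int f(x)\,\kappa_\alpha\nu_1(x)\,dx = \int f(x)\,\kappa_\alpha\nu_2(x)\,dx$ for every such $f$, whence $\kappa_\alpha\nu_1 = \kappa_\alpha\nu_2$ a.e.\ on $\mathbb{R}^n$; by classical injectivity of the Riesz potential, $\nu_1 = \nu_2$.

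The main obstacle is the monotone approximation step when $\mu$ charges the polar set $\{\kappa_\alpha\mu = +\infty\}$, since the truncation above loses this portion of the mass. I expect to handle this by decomposing $\mu$ into a $c_\alpha$-absolutely continuous part (to which the above construction applies verbatim) and a remainder supported on the polar set, treating the remainder by a separate limiting procedure—e.g., vague approximation by smoothed finite-energy measures, in which monotonicity is lost but the duality equality can still be recovered from lower semicontinuity in both arguments—then combining the two balayées by linearity of the defining relation (\ref{alternative}).
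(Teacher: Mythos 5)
Your construction of the approximating sequence is where the argument breaks down, and it breaks down precisely in the case the whole paper needs most. Setting $\mu_k:=\mathbf 1_{\{\kappa_\alpha\nu_k\leqslant k\}}\nu_k$ produces an increasing sequence whose supremum is only the restriction of $\mu$ to the set $\{\kappa_\alpha\mu<+\infty\}$; any mass that $\mu$ places on the polar set $\{\kappa_\alpha\mu=+\infty\}$ is lost. Since $0<\alpha\leqslant2<n$, every single point is polar, so for $\mu=\varepsilon_y$ your truncation gives $\mu_k=0$ for all $k$ (the point $y$ never lies in $\{\kappa_\alpha\nu_k\leqslant k\}$), and the scheme produces $\varepsilon_y^A=0$ — yet $\varepsilon_y^A$ is exactly the object from which the $\alpha$-Green kernel is built in Section~4. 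You flag this yourself, but the proposed repair is not a proof: once you pass to non-monotone smoothed approximants, vague convergence together with lower semicontinuity of the energy yields only $\liminf$-inequalities in one direction, which cannot by itself recover the \emph{equality} $\kappa_\alpha(\mu^A,\lambda)=\kappa_\alpha(\mu,\lambda^A)$; you would also still have to show that the balay\'ees of the smoothed measures converge vaguely to something carried by $A$. So the "remainder on the polar set" is not a fringe case to be patched later — it is the heart of the theorem.

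The paper's proof avoids this by choosing approximants $\mu_k\in\mathcal E_\alpha^+$ whose \emph{potentials} increase, $\kappa_\alpha\mu_k\uparrow\kappa_\alpha\mu$, rather than the measures themselves (the classical construction of Cartan and Landkof, which works for arbitrary $\mu$ including Dirac measures). Monotonicity of $\kappa_\alpha\mu_k^A$ then comes from the domination principle (not from additivity of balayage), the existence of a limit measure $\nu$ with $\kappa_\alpha\mu_k^A\uparrow\kappa_\alpha\nu$ and $\mu_k^A\to\nu$ vaguely is supplied by Landkof's convergence theorem for increasing sequences of potentials, and the duality passes to the limit by monotone convergence. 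Your remaining steps — the local mass bound via a test measure with $\kappa_\alpha\lambda\geqslant1$ on $K$, and the uniqueness argument via Fubini and injectivity of the Riesz potential (essentially the paper's convolution argument) — are sound, but they rest on an existence construction that does not cover general $\mu\in\mathfrak M^+$.
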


\begin{proof} This follows from Theorem~\ref{th-bala-f} and relation (\ref{fine}) with arguments similar to those in the proof of \cite[Theorem~4.16$'$]{L} (now for a closed set $A$ instead of a compact $K$). Indeed, likewise as in \cite[p.~272]{L} (see also \cite[p.~257, footnote]{Ca2}) for $\mu\ne0$ one can construct  a sequence of measures $\mu_k\in\mathcal E^+_\alpha$ such that $\kappa_\alpha\mu_k\uparrow\kappa_\alpha\mu$ and $\mu_k\to\mu$ vaguely (as $k\to+\infty$). Then, applying relation~(\ref{eq-bala-f1}) to $\mu_k\in\mathcal E^+_\alpha$, cf.\ Theorem~\ref{th-bala-f}, we obtain
\[\kappa_\alpha\mu_k^A=\kappa_\alpha\mu_k\leqslant\kappa_\alpha\mu_{k+1}=\kappa_\alpha\mu_{k+1}^A\]
n.e.\ on $A$ and hence $\mu_k^A$-a.e., for $\mu_k^A\in\mathcal E_\alpha^+(A)$. By the $\kappa_\alpha$-dom\-in\-ation principle \cite[Theorems~1.27, 1.29]{L},
\[\kappa_\alpha\mu_k^A\leqslant\kappa_\alpha\mu_{k+1}^A\quad\text{everywhere on \ }\mathbb R^n.\]
Thus, $\kappa_\alpha\mu_k^A$ increases along with $\kappa_\alpha\mu_k$ and does not exceed $\kappa_\alpha\mu$. According to \cite[Theorem~3.9]{L}, there exists $\nu\in\mathfrak M^+$ such that
\[\kappa_\alpha\mu_k^A\uparrow\kappa_\alpha\nu\]
and $\mu_k^A\to\nu$ vaguely (as $k\to+\infty$). Since $A$ is closed, the latter implies $\nu\in\mathfrak M^+(A)$. Besides, having written relation (\ref{fine}) for $\mu_k\in\mathcal E^+_\alpha$ and then applied
\cite[Chapter~IV, Section~1, Th\'eor\`eme~3]{B2}, we arrive at (\ref{alternative}) with $\mu^A:=\nu$. The measure $\mu^A\in\mathfrak M^+(A)$ constructed just above is thus a balay\'ee of $\mu\in\mathfrak M^+$ onto~$A$, and $\kappa_\alpha\mu_k^A\uparrow\kappa_\alpha\mu^A$. Now, having written relations (\ref{eq-bala-f1}) and (\ref{eq-bala-f2}) for $\mu_k\in\mathcal E^+_\alpha$, cf.\ Theorem~\ref{th-bala-f}, and then letting $k\to\infty$, we arrive at relations (\ref{eq-bala-f1}) and (\ref{eq-bala-f2}) for $\mu\in\mathfrak M^+$ as claimed.

For uniqueness, having assumed that (\ref{alternative}) also holds for some $\nu'\in\mathfrak M^+(A)$ in place of $\mu^A$, we conclude that, for any $r>0$,
\[\kappa_\alpha\mu^A\ast m^{(r)}=\kappa_\alpha\nu'\ast m^{(r)},\]
where $m^{(r)}$ is the measure obtained by uniformly distributing unit mass over the open ball $B(0,r):=\{x\in\mathbb R^n: \ |x|<r\}$ and $\ast$ denotes the convolution. Letting $r\to0$ in the last display and applying  \cite[Theorems~1.11, 1.12]{L} establishes $\nu'=\mu^A$.\end{proof}

\begin{corollary}\label{ineq-en} For any\/ $\mu\in\mathfrak M^+$ we have\/ $\kappa_\alpha(\mu^A,\mu^A)\leqslant\kappa_\alpha(\mu,\mu)$.
\end{corollary}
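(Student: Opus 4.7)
The plan is to split according to whether $\mu$ has finite energy. If $\kappa_\alpha(\mu,\mu)=+\infty$ the asserted inequality is vacuous, so it suffices to treat $\mu\in\mathcal E_\alpha^+$. In that case Remark~\ref{ba-finite} (equivalently Theorem~\ref{th-bala-f}) ensures $\mu^A\in\mathcal E_\alpha^+(A)$, so all inner products involving $\mu^A$ are finite and the argument can be carried out inside the pre-Hilbert space $\mathcal E_\alpha$.

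The key step is to apply the symmetry relation~(\ref{alternative}) with the test measure $\lambda:=\mu^A\in\mathcal E_\alpha^+$. Before doing so I record the idempotence $(\mu^A)^A=\mu^A$: the measure $\mu^A$ is already carried by $A$, so it trivially satisfies (\ref{eq-bala-f1}) with itself in place of the balay\'ee, and the uniqueness statement in Theorem~\ref{th-bala-f} forces $(\mu^A)^A=\mu^A$. Specializing (\ref{alternative}) accordingly gives
\[\kappa_\alpha(\mu^A,\mu^A)=\kappa_\alpha(\mu^A,(\mu^A)^A)=\kappa_\alpha(\mu,\mu^A).\]
The Cauchy--Schwarz inequality in $\mathcal E_\alpha$ then yields $\kappa_\alpha(\mu,\mu^A)\leqslant\|\mu\|_\alpha\,\|\mu^A\|_\alpha$, so combining the two displays and dividing by $\|\mu^A\|_\alpha$ (the case $\|\mu^A\|_\alpha=0$ being trivial) produces $\|\mu^A\|_\alpha\leqslant\|\mu\|_\alpha$, as required.

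An equivalent, more conceptual route bypasses~(\ref{alternative}) and appeals directly to the variational description in Theorem~\ref{th-bala-f}: since $\mu^A$ is the orthogonal projection of $\mu$ onto the convex cone $\mathcal E_\alpha^+(A)\ni 0$, the standard contraction property of orthogonal projections onto closed convex sets containing the origin immediately yields $\|\mu^A\|_\alpha\leqslant\|\mu\|_\alpha$. No substantive obstacle is anticipated; the only small points to verify are the infinite-energy reduction at the outset and the idempotence $(\mu^A)^A=\mu^A$, both of which follow at once from the uniqueness clauses of the balayage theorems already established.
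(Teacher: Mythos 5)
Your proof is correct, but it follows a different route from the paper's. The paper argues directly for arbitrary $\mu\in\mathfrak M^+$, with no case distinction and no Hilbert-space machinery: it integrates the everywhere-valid pointwise inequality $\kappa_\alpha\mu^A\leqslant\kappa_\alpha\mu$ from (\ref{eq-bala-f2}) (guaranteed for all $\mu\in\mathfrak M^+$ by Theorem~\ref{th-bala-2}) once against $\mu^A$ and once against $\mu$, obtaining $\kappa_\alpha(\mu^A,\mu^A)\leqslant\kappa_\alpha(\mu,\mu^A)=\kappa_\alpha(\mu^A,\mu)\leqslant\kappa_\alpha(\mu,\mu)$; since all integrands are positive, every term is well defined in $[0,+\infty]$ and the chain holds even when the energies are infinite. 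You instead dispose of the infinite-energy case by vacuity and then work inside the pre-Hilbert space $\mathcal E_\alpha$, using the idempotence $(\mu^A)^A=\mu^A$ together with (\ref{alternative}) (or equivalently (\ref{fine})) to get $\kappa_\alpha(\mu^A,\mu^A)=\kappa_\alpha(\mu,\mu^A)$ and finishing with Cauchy--Schwarz; your second variant, the contraction property of the projection onto the convex cone $\mathcal E_\alpha^+(A)\ni0$ from Theorem~\ref{th-bala-f}, is the same computation in geometric dress (the variational inequality for the projection at $0$ and at $2\mu^A$ yields exactly $\kappa_\alpha(\mu,\mu^A)=\|\mu^A\|_\alpha^2$). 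All the small points you flag --- the reduction, the idempotence via the uniqueness clause of Theorem~\ref{th-bala-f}, and the legitimacy of dividing by $\|\mu^A\|_\alpha$ --- do go through. What the paper's argument buys is uniformity (no case split) and elementarity (only positivity and Tonelli); what yours buys is a conceptual explanation of the inequality as the nonexpansiveness of a projection.
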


\begin{proof}Applying relation (\ref{eq-bala-f2}), cf.\ Theorem~\ref{th-bala-2}, we obtain
\[\kappa_\alpha(\mu^A,\mu^A)\leqslant\kappa_\alpha(\mu,\mu^A)=\kappa_\alpha(\mu^A,\mu)\leqslant\kappa_\alpha(\mu,\mu)\]
as claimed.\end{proof}

Finally, the symmetry relation (\ref{fine}) is extended to arbitrary $\mu,\nu\in\mathfrak M^+$.

\begin{theorem}\label{th-alt2}For any\/ $\mu,\nu\in\mathfrak M^+$ we have
\begin{equation}\label{alternative2}
\kappa_\alpha(\mu^A,\nu)=\kappa_\alpha(\mu,\nu^A).
\end{equation}
\end{theorem}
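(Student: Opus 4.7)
The plan is to extend the symmetry relation from Definition~\ref{def-swpt} (which covers the case where one of the two arguments has finite energy) to arbitrary positive measures by a monotone approximation argument. The key observation is that the construction in the proof of Theorem~\ref{th-bala-2} produces, for any $\nu\in\mathfrak M^+$, a sequence $(\nu_j)\subset\mathcal E^+_\alpha$ with $\kappa_\alpha\nu_j\uparrow\kappa_\alpha\nu$ pointwise on $\mathbb R^n$ and, crucially, with $\kappa_\alpha\nu_j^A\uparrow\kappa_\alpha\nu^A$ pointwise as well.

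Fix $\mu,\nu\in\mathfrak M^+$ and choose such a sequence $(\nu_j)\subset\mathcal E^+_\alpha$ for $\nu$. Since each $\nu_j$ has finite energy, Definition~\ref{def-swpt} (applied with $\lambda=\nu_j$) yields
\[\kappa_\alpha(\mu^A,\nu_j)=\kappa_\alpha(\mu,\nu_j^A)\quad\text{for every \ }j\in\mathbb N.\]
It remains to pass to the limit in $j$ on both sides.

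Using Fubini's theorem (all measures and the kernel are $\geqslant0$), the left-hand side may be written as
\[\kappa_\alpha(\mu^A,\nu_j)=\int\kappa_\alpha\nu_j\,d\mu^A\ \xrightarrow[j\to\infty]{}\ \int\kappa_\alpha\nu\,d\mu^A=\kappa_\alpha(\mu^A,\nu),\]
by the monotone convergence theorem applied to $\kappa_\alpha\nu_j\uparrow\kappa_\alpha\nu$. Similarly, the right-hand side may be written as
\[\kappa_\alpha(\mu,\nu_j^A)=\int\kappa_\alpha\nu_j^A\,d\mu\ \xrightarrow[j\to\infty]{}\ \int\kappa_\alpha\nu^A\,d\mu=\kappa_\alpha(\mu,\nu^A),\]
again by monotone convergence, now using $\kappa_\alpha\nu_j^A\uparrow\kappa_\alpha\nu^A$ as supplied by the proof of Theorem~\ref{th-bala-2}. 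Comparing the two limits gives (\ref{alternative2}).

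There is no real obstacle here: once one has the monotone approximation from Theorem~\ref{th-bala-2} (which itself is where the work lives), the present statement reduces to a double application of the monotone convergence theorem together with Fubini's theorem, valid because everything in sight is positive. The only point to verify carefully is that the approximating sequence for $\nu$ can be chosen so that \emph{both} $\kappa_\alpha\nu_j\uparrow\kappa_\alpha\nu$ \emph{and} $\kappa_\alpha\nu_j^A\uparrow\kappa_\alpha\nu^A$ hold pointwise, and this is precisely what was established in the proof of Theorem~\ref{th-bala-2}.
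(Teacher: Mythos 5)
Your proposal is correct and follows essentially the same route as the paper: both take the approximating sequence $(\nu_k)\subset\mathcal E_\alpha^+$ with $\kappa_\alpha\nu_k\uparrow\kappa_\alpha\nu$ and $\kappa_\alpha\nu_k^A\uparrow\kappa_\alpha\nu^A$ from the proof of Theorem~\ref{th-bala-2}, apply (\ref{alternative}) with $\lambda=\nu_k$, and pass to the limit. The paper simply cites Bourbaki's monotone convergence theorem for the limit passage where you invoke Fubini--Tonelli plus monotone convergence explicitly; the content is the same.
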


\begin{proof}It is seen from the proof of Theorem~\ref{th-bala-2} that $\kappa_\alpha\nu^A$ is the pointwise limit of an increasing sequence $\kappa_\alpha\nu_k^A$, $k\in\mathbb N$, where $\nu_k\in\mathcal E_\alpha^+$ and $\kappa_\alpha\nu_k\uparrow\kappa_\alpha\nu$ (as $k\to+\infty$). Hence, by (\ref{alternative}) for $\nu_k$ in place of $\lambda$,
\[\kappa_\alpha(\mu^A,\nu_k)=\kappa_\alpha(\mu,\nu_k^A)\quad\text{for all \ }k\in\mathbb N.\]
Letting $k\to+\infty$ and applying \cite[Chapter~IV, Section~1, Th\'eor\`eme~3]{B2}, we thus get (\ref{alternative2}), as was to be proved.\end{proof}

\begin{corollary}\label{cor-sym}For any\/ $x,y\in\mathbb R^n$,
\begin{equation}\label{symD}\kappa_\alpha\varepsilon_x^A(y)=\kappa_\alpha(\varepsilon_x^A,\varepsilon_y)=\kappa_\alpha(\varepsilon_x,\varepsilon^A_y)=\kappa_\alpha\varepsilon_y^A(x),\end{equation}
where\/ $\varepsilon_z$ denotes the unit Dirac measure at a point\/ $z\in\mathbb R^n$. More generally, for every\/ $\mu\in\mathfrak M^+$ and every\/ $y\in\mathbb R^n$,
\begin{equation}\label{symD2}\kappa_\alpha\mu^A(y)=\kappa_\alpha(\mu^A,\varepsilon_y)=\kappa_\alpha(\mu,\varepsilon^A_y)=\int\kappa_\alpha\varepsilon^A_y(x)\,d\mu(x).\end{equation}
\end{corollary}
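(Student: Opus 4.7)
The plan is to deduce Corollary \ref{cor-sym} directly from Theorem \ref{th-alt2} by specializing one of the measures to a Dirac mass, together with unwinding the definitions of ``potential'' and ``mutual energy'' via Tonelli's theorem. Everything in sight is positive (the kernel is nonnegative, the measures are in $\mathfrak M^+$, and Dirac masses trivially lie in $\mathcal E_\alpha^+$), so there will be no integrability obstruction and each identity will hold as an extended real number in $[0,+\infty]$.

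First I would establish the two ``collapsing'' identities coming purely from symmetry of $\kappa_\alpha$ and the definition of a potential. For any $\sigma\in\mathfrak M^+$ and any $y\in\mathbb R^n$,
\[\kappa_\alpha(\sigma,\varepsilon_y)=\iint\kappa_\alpha(z,w)\,d\sigma(z)\,d\varepsilon_y(w)=\int\kappa_\alpha(z,y)\,d\sigma(z)=\kappa_\alpha\sigma(y),\]
using $\kappa_\alpha(z,y)=\kappa_\alpha(y,z)$. Applied to $\sigma=\varepsilon_x^A$ this gives $\kappa_\alpha\varepsilon_x^A(y)=\kappa_\alpha(\varepsilon_x^A,\varepsilon_y)$, and applied to $\sigma=\mu^A$ it gives $\kappa_\alpha\mu^A(y)=\kappa_\alpha(\mu^A,\varepsilon_y)$, which are the leftmost equalities in (\ref{symD}) and (\ref{symD2}).

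Next I would invoke Theorem~\ref{th-alt2}, which states $\kappa_\alpha(\mu^A,\nu)=\kappa_\alpha(\mu,\nu^A)$ for arbitrary $\mu,\nu\in\mathfrak M^+$. Taking $\mu=\varepsilon_x$ and $\nu=\varepsilon_y$ yields the middle equality of (\ref{symD}); taking $\nu=\varepsilon_y$ and keeping $\mu$ general yields the middle equality of (\ref{symD2}). The rightmost equality of (\ref{symD}) is then the symmetric-kernel identity applied to $\sigma=\varepsilon_y^A$ with the roles of $x$ and $y$ interchanged. For the rightmost equality of (\ref{symD2}), I would apply Tonelli's theorem to the nonnegative integrand:
\[\kappa_\alpha(\mu,\varepsilon_y^A)=\iint\kappa_\alpha(x,z)\,d\mu(x)\,d\varepsilon_y^A(z)=\int\Bigl(\int\kappa_\alpha(x,z)\,d\varepsilon_y^A(z)\Bigr)\,d\mu(x)=\int\kappa_\alpha\varepsilon_y^A(x)\,d\mu(x).\]

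There is really no obstacle beyond bookkeeping; the only point worth noting is that although $\kappa_\alpha\varepsilon_y^A(x)$ and $\kappa_\alpha\mu^A(y)$ may equal $+\infty$ for exceptional $x$ or $y$, all quantities in both displays are simultaneously $[0,+\infty]$-valued and the identities hold as extended reals, so Tonelli's theorem is the only measure-theoretic tool required.
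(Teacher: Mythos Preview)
Your proof is correct and takes essentially the same approach as the paper: the paper's proof is a single sentence stating that both (\ref{symD}) and (\ref{symD2}) follow directly from the symmetry relation (\ref{alternative2}) of Theorem~\ref{th-alt2}, and you have simply spelled out the bookkeeping behind that sentence.
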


\begin{proof}Indeed, both (\ref{symD}) and (\ref{symD2}) follow directly from (\ref{alternative2}).\end{proof}

\begin{lemma}\label{l-char} Given\/ $\mu\in\mathfrak M^+$ and\/ $A$, the swept potential\/ $\kappa_\alpha\mu^A$ {\rm(}and, hence, the swept measure\/ $\mu^A${\rm)} can be characterized uniquely by the relation
\begin{equation}\label{bal-char}\kappa_\alpha\mu^A=\min\,\kappa_\alpha\xi,\end{equation} where\/ $\xi\in\mathfrak M^+$ ranges over all measures with
\begin{equation}\label{eq-char}\kappa_\alpha\xi\geqslant\kappa_\alpha\mu\quad\text{n.e.\ on \ }A.\end{equation}
\end{lemma}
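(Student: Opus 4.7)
The plan is to establish the characterization in two halves. First I verify that $\mu^A$ belongs to the competing class (so the pointwise minimum is no bigger than $\kappa_\alpha\mu^A$); then I show that every competitor $\xi$ majorises $\kappa_\alpha\mu^A$ everywhere. Uniqueness of $\mu^A$ given its potential follows from Landkof's injectivity theorems for Riesz potentials (Theorems~1.11, 1.12 in \cite{L}), already invoked at the end of the proof of Theorem~\ref{th-bala-2}.

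For the first half, Theorem~\ref{th-bala-2} directly gives $\kappa_\alpha\mu^A=\kappa_\alpha\mu$ n.e.\ on $A$, so $\mu^A$ itself satisfies (\ref{eq-char}). For the second half, let $\xi\in\mathfrak{M}^+$ satisfy (\ref{eq-char}). The natural strategy is to reduce to the finite-energy case via the approximation used in the proof of Theorem~\ref{th-bala-2}: choose $\mu_k\in\mathcal{E}^+_\alpha$ with $\kappa_\alpha\mu_k\uparrow\kappa_\alpha\mu$, $\mu_k\to\mu$ vaguely, so that also $\kappa_\alpha\mu_k^A\uparrow\kappa_\alpha\mu^A$. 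Since $\kappa_\alpha\mu_k\leqslant\kappa_\alpha\mu$ everywhere, the hypothesis on $\xi$ yields $\kappa_\alpha\xi\geqslant\kappa_\alpha\mu_k$ n.e.\ on $A$, and combining this with $\kappa_\alpha\mu_k^A=\kappa_\alpha\mu_k$ n.e.\ on $A$ (Theorem~\ref{th-bala-f}) gives
\[
\kappa_\alpha\xi\geqslant\kappa_\alpha\mu_k^A\quad\text{n.e.\ on \ }A.
\]
Because $\mu_k^A\in\mathcal{E}_\alpha^+(A)$ has finite energy it is $c_\alpha$-absolutely continuous, so the exceptional subset of $S(\mu_k^A)\subset A$ is $\mu_k^A$-null; hence the inequality holds $\mu_k^A$-a.e. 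The $\kappa_\alpha$-domination principle (\cite[Theorems~1.27, 1.29]{L}) now promotes it to all of $\mathbb{R}^n$:
\[
\kappa_\alpha\xi\geqslant\kappa_\alpha\mu_k^A\quad\text{everywhere on \ }\mathbb{R}^n.
\]
Letting $k\to\infty$ and using $\kappa_\alpha\mu_k^A\uparrow\kappa_\alpha\mu^A$ yields $\kappa_\alpha\xi\geqslant\kappa_\alpha\mu^A$ pointwise on $\mathbb{R}^n$, as required.

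Finally, for the uniqueness claim in the lemma: if $\eta\in\mathfrak{M}^+$ is another measure whose potential equals the pointwise minimum in (\ref{bal-char}), then $\kappa_\alpha\eta=\kappa_\alpha\mu^A$ on $\mathbb{R}^n$, and the injectivity of the $\alpha$-Riesz potential map on positive Radon measures (convolving with $m^{(r)}$ and letting $r\to0$, as in the uniqueness argument of Theorem~\ref{th-bala-2}) forces $\eta=\mu^A$.

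The main obstacle I anticipate is the last step of the domination argument: the hypothesis (\ref{eq-char}) is phrased n.e.\ on $A$, whereas Landkof's domination principle wants the inequality $\mu^A$-a.e. Rather than proving $c_\alpha$-absolute continuity of $\mu^A$ directly (which would require care because vague limits of $c_\alpha$-absolutely continuous measures need not be $c_\alpha$-absolutely continuous in a straightforward way), the approximation $\mu_k^A$ sidesteps the issue, since for the finite-energy measures $\mu_k^A$ the $c_\alpha$-absolute continuity is automatic and the monotone limit of potentials preserves the final pointwise inequality.
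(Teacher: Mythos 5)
Your proposal is correct and follows essentially the same route as the paper's own proof: both verify that $\mu^A$ is a competitor via (\ref{eq-bala-f1}), then use the increasing approximation $\kappa_\alpha\mu_k^A\uparrow\kappa_\alpha\mu^A$ with $\mu_k\in\mathcal E_\alpha^+$ from the proof of Theorem~\ref{th-bala-2}, pass from the n.e.\ inequality on $A$ to a $\mu_k^A$-a.e.\ one (since $\mu_k^A$ has finite energy), apply the $\kappa_\alpha$-domination principle, and let $k\to+\infty$. Your added remarks on uniqueness via convolution with $m^{(r)}$ and on why one works with $\mu_k^A$ rather than $\mu^A$ directly are consistent with the paper.
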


\begin{proof}Since the swept measure $\mu^A$ satisfies relation (\ref{eq-char}) in accordance with (\ref{eq-bala-f1}), cf.\ Theorem~\ref{th-bala-2}, it is enough to show that
\begin{equation}\label{eq-char1}\kappa_\alpha\mu^A\leqslant\kappa_\alpha\xi\quad\text{everywhere on \ }\mathbb R^n\end{equation}
for every $\xi\in\mathfrak M^+$ possessing the property (\ref{eq-char}). As seen from the proof of Theorem~\ref{th-bala-2}, $\kappa_\alpha\mu^A$ is the pointwise limit of an increasing sequence $\kappa_\alpha\mu_k^A$, $k\in\mathbb N$, where $\mu_k\in\mathcal E_\alpha^+$ and $\kappa_\alpha\mu_k\uparrow\kappa_\alpha\mu$ (as $k\to+\infty$). Since
\[\kappa_\alpha\mu_k^A\leqslant\kappa_\alpha\mu_k\leqslant\kappa_\alpha\mu\leqslant\kappa_\alpha\xi\]
n.e.\ on $A$ and hence $\mu_k^A$-a.e., the $\kappa_\alpha$-domination principle \cite[Theorems~1.27, 1.29]{L} yields $\kappa_\alpha\mu_k^A\leqslant\kappa_\alpha\xi$ on all of $\mathbb R^n$. Letting here $k\to+\infty$ leads to relation (\ref{eq-char1}).\end{proof}

\subsection{Properties of the swept measure.~I} Our next goal is to show that sweeping of a positive measure does not increase the total mass. Actually, the following more general statement holds.

\begin{theorem}\label{cor-mass'} {\rm(Principle of positivity of mass\footnote{The principle of positivity of mass has been introduced by Deny \cite[p.~165]{D2}.})} For any\/ $\mu,\nu\in\mathfrak M^+$ such that\/ $\kappa_\alpha\mu\geqslant\kappa_\alpha\nu$ everywhere on\/ $\mathbb R^n$ we have\/ $\mu(\mathbb R^n)\geqslant\nu(\mathbb R^n)$. In particular,
\begin{equation}\label{t-mass}\mu(\mathbb R^n)\geqslant\mu^A(\mathbb R^n)\quad\text{for any \ }\mu\in\mathfrak M^+.\end{equation}
\end{theorem}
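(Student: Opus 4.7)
The plan is to derive (\ref{t-mass}) from the general principle by setting $\nu:=\mu^A$ and invoking $\kappa_\alpha\mu^A\leq\kappa_\alpha\mu$ from Theorem~\ref{th-bala-2}, so only the general principle really needs to be established.

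For the general principle, the idea is to integrate the pointwise inequality $\kappa_\alpha\mu\geq\kappa_\alpha\nu$ against a well-chosen test measure: the $\alpha$-Riesz equilibrium measure $\omega_R:=\gamma_{\overline{B(0,R)}}\in\mathcal E_\alpha^+$ of the closed ball of radius $R>0$, which exists because the ball has finite capacity. By (\ref{sec2-3}) one has $\kappa_\alpha\omega_R\leq 1$ on all of $\mathbb R^n$, and by (\ref{sec2-2}) one has $\kappa_\alpha\omega_R=1$ at least n.e.\ on $\overline{B(0,R)}$. I would upgrade the latter to a pointwise equality by first observing that $\kappa_\alpha\omega_R\equiv 1$ on $S(\omega_R)$ (if it dipped below $1$ at a support point, lower semicontinuity would produce a neighborhood where $\kappa_\alpha\omega_R<1$, and the $c_\alpha$-absolute continuity of $\omega_R\in\mathcal E_\alpha^+$ would then contradict $x$ being in the support), then applying the Evans--Vasilesco continuity principle to conclude that $\kappa_\alpha\omega_R$ is continuous on $\mathbb R^n$; since $c_\alpha$-null sets have empty interior, continuity combined with the n.e.\ identity forces $\kappa_\alpha\omega_R=1$ at every point of $\overline{B(0,R)}$.

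The main computation is then the Tonelli chain
\[
\mu(\mathbb R^n)\geq\int\kappa_\alpha\omega_R\,d\mu=\int\kappa_\alpha\mu\,d\omega_R\geq\int\kappa_\alpha\nu\,d\omega_R=\int\kappa_\alpha\omega_R\,d\nu\geq\nu(\overline{B(0,R)}),
\]
where the middle equalities come from Tonelli's theorem (applicable since $\kappa_\alpha\geq 0$), the first and last inequalities use $\kappa_\alpha\omega_R\leq 1$ globally and $\kappa_\alpha\omega_R=1$ on $\overline{B(0,R)}$ respectively, and the middle inequality integrates the hypothesis against the positive measure $\omega_R$. Letting $R\to\infty$ then yields $\mu(\mathbb R^n)\geq\nu(\mathbb R^n)$ by monotone convergence.

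The main obstacle is precisely the upgrade from n.e.\ to pointwise on the ball: since $\nu$ need not be $c_\alpha$-absolutely continuous, the raw n.e.\ content of (\ref{sec2-2}) would leak mass in the final step of the chain. The fix relies on the Evans--Vasilesco continuity principle available for $\kappa_\alpha$ (Section~\ref{sec2}) together with the elementary observation that $c_\alpha$-null sets have empty interior; alternatively, one could invoke Wiener's criterion (\ref{W}) directly to show that $\partial B(0,R)$ contains no $\alpha$-irregular points and thereby bypass the continuity argument.
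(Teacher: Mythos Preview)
Your proof is correct and follows essentially the same route as the paper: integrate the inequality $\kappa_\alpha\mu\geqslant\kappa_\alpha\nu$ against the equilibrium measure of a large ball, use Fubini/Tonelli and the fact that this equilibrium potential equals $1$ on the ball and is ${}\leqslant1$ everywhere, then let the radius tend to infinity. The only real difference is that the paper simply cites \cite[Chapter~II, Section~3, n$^\circ$\,13]{L} for the pointwise identity $\kappa_\alpha\gamma_{\overline B(0,R)}=1$ on $\overline B(0,R)$, whereas you supply an argument (support analysis plus the continuity principle) to upgrade the n.e.\ identity to a pointwise one; both are fine.
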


\begin{proof}
Consider the sequence of the closed balls $\overline{B}_k:=\overline{B}(0,k):=\{x\in\mathbb R^n: \ |x|\leqslant k\}$, $k\in\mathbb N$, and let $\gamma_k$ be the $\alpha$-Riesz equilibrium measure on $\overline{B}_k$.
Then $1=\kappa_\alpha\gamma_k=\kappa_\alpha\gamma_{k+1}$ everywhere on $\overline{B}_k$, cf.\ \cite[Chapter~II, Section~3, n$^\circ$\,13]{L}, and by the $\kappa_\alpha$-domination principle $\kappa_\alpha\gamma_k\leqslant\kappa_\alpha\gamma_{k+1}$
on all of $\mathbb R^n$. Thus the sequence $\kappa_\alpha\gamma_k$, $k\in\mathbb N$, is increasing, clearly with the pointwise limit~$1$. For $\mu,\nu\in\mathfrak M^+$ with $\kappa_\alpha\mu\geqslant\kappa_\alpha\nu$ everywhere on $\mathbb R^n$, it follows that
\[\int\kappa_\alpha\gamma_k\,d\nu=\int\kappa_\alpha\nu\,d\gamma_k\leqslant
\int\kappa_\alpha\mu\,d\gamma_k=\int\kappa_\alpha\gamma_k\,d\mu,\]
whence the former part of the theorem by letting $k\to+\infty$. Taking here $\mu^A$ instead of $\nu$, which is possible in view of (\ref{eq-bala-f2}), we obtain relation~(\ref{t-mass}).\end{proof}

The latter part of Theorem~\ref{cor-mass'} is specified by Theorem~\ref{bal-mass-th} below.

\begin{theorem}\label{equiv} For any\/ $\alpha$-reg\-ular point\/ $y\in A$ we have\/ $\varepsilon_y^A=\varepsilon_y$. For any other\/ $y\in\mathbb R^n$, $\varepsilon_y^A$ is\/ $c_\alpha$-ab\-sol\-utely continuous.\end{theorem}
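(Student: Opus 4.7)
I will realize $\varepsilon_y^A$ via the Kelvin transform relative to the unit sphere $S(y,1)$ centered at $y$, in the spirit of the approach announced in the introduction. Let $\phi$ denote this inversion (swapping $y$ and $\omega_{\mathbb R^n}$), and let $A^*$ be the image of $A\cup\{\omega_{\mathbb R^n}\}$ under $\phi$. By Section~\ref{sec-thin}, $A^*$ is $\alpha$-thin at $\omega_{\mathbb R^n}$ if and only if $A$ is $\alpha$-thin at $y$, i.e.\ iff $y$ is not an $\alpha$-regular point of $A$ (vacuously if $y\notin A$), and in that case the extended-sense $\alpha$-Riesz equilibrium measure $\gamma_{A^*}$ exists, is unique, $c_\alpha$-absolutely continuous, and satisfies $\kappa_\alpha\gamma_{A^*}=1$ n.e.\ on $A^*$ with $\leqslant 1$ everywhere. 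Throughout I will use the $\alpha$-Kelvin transform of measures (cf.\ \cite{L}): to a $\lambda\in\mathfrak M^+$ with $\lambda(\{y\})=0$ it assigns a measure $\lambda^\phi\in\mathfrak M^+(\mathbb R^n\setminus\{y\})$ characterized by
\[
\kappa_\alpha\lambda^\phi(w)=|w-y|^{\alpha-n}\,\kappa_\alpha\lambda(\phi(w)),\qquad w\in\mathbb R^n\setminus\{y\},
\]
together with the inversion-invariance of $c_\alpha$-null sets off $y$.

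For the second assertion (where $y$ is not $\alpha$-regular for $A$) I would apply this transform to $\lambda=\gamma_{A^*}$. Using $|\phi(x)-y|=1/|x-y|$ together with $\kappa_\alpha\gamma_{A^*}(\phi(x))=1$ for n.e.\ $x\in A$, the resulting measure $\tilde\mu:=(\gamma_{A^*})^\phi\in\mathfrak M^+(A)$ satisfies $\kappa_\alpha\tilde\mu(x)=|x-y|^{\alpha-n}=\kappa_\alpha\varepsilon_y(x)$ n.e.\ on $A$ and $\kappa_\alpha\tilde\mu\leqslant\kappa_\alpha\varepsilon_y$ everywhere on $\mathbb R^n$. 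Lemma~\ref{l-char} then identifies $\tilde\mu$ with $\varepsilon_y^A$, and the $c_\alpha$-absolute continuity of $\varepsilon_y^A$ is inherited from $\gamma_{A^*}$ via the $\phi$-invariance of $c_\alpha$-null sets.

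For the first assertion (where $y\in A$ is $\alpha$-regular) the measure $\varepsilon_y$ itself lies in $\mathfrak M^+(A)$ and is a trivial competitor in Lemma~\ref{l-char}, so $\kappa_\alpha\varepsilon_y^A\leqslant\kappa_\alpha\varepsilon_y$ on $\mathbb R^n$; combined with Theorem~\ref{th-bala-2} one has $\kappa_\alpha\varepsilon_y^A=\kappa_\alpha\varepsilon_y$ n.e.\ on $A$. Setting $c:=\varepsilon_y^A(\{y\})\in[0,1]$ and $\sigma:=\varepsilon_y^A-c\,\varepsilon_y\in\mathfrak M^+(A)$, one has $\sigma(\{y\})=0$, $\kappa_\alpha\sigma=(1-c)\kappa_\alpha\varepsilon_y$ n.e.\ on $A$, and $\kappa_\alpha\sigma\leqslant(1-c)\kappa_\alpha\varepsilon_y$ everywhere. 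Applying the min-characterization of Lemma~\ref{l-char} to the source $(1-c)\varepsilon_y$ with $\sigma$ as competitor, and invoking linearity of the balayage to identify $((1-c)\varepsilon_y)^A$ with $(1-c)\varepsilon_y^A$, yields $(1-c)\kappa_\alpha\varepsilon_y^A\leqslant\kappa_\alpha\sigma=\kappa_\alpha\varepsilon_y^A-c\,\kappa_\alpha\varepsilon_y$; rearranging gives $c\,\kappa_\alpha\varepsilon_y^A\geqslant c\,\kappa_\alpha\varepsilon_y$, which together with the earlier upper bound forces $c\,\kappa_\alpha\varepsilon_y^A=c\,\kappa_\alpha\varepsilon_y$ and hence, by injectivity of the $\alpha$-Riesz potential, $c\,\varepsilon_y^A=c\,\varepsilon_y$. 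When $c>0$ this gives $\varepsilon_y^A=\varepsilon_y$.

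To exclude $c=0$ under the $\alpha$-regularity of $y$, I apply the Kelvin transform to $\varepsilon_y^A$ itself (permitted since by assumption it carries no mass at $y$). A symmetric computation using the identity above shows $\kappa_\alpha(\varepsilon_y^A)^\phi=1$ n.e.\ on $A^*$ and $\leqslant 1$ everywhere on $\mathbb R^n$, so $(\varepsilon_y^A)^\phi$ would be an extended-sense $\alpha$-Riesz equilibrium measure on $A^*$. But the $\alpha$-regularity of $y$ in $A$ amounts, via Section~\ref{sec-thin}, to $A^*$ being \emph{not} $\alpha$-thin at $\omega_{\mathbb R^n}$, so no such $\gamma_{A^*}$ exists --- a contradiction. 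Thus $c>0$, and $\varepsilon_y^A=\varepsilon_y$. The principal technical obstacle I anticipate is the careful setup of the $\alpha$-Kelvin transform on arbitrary positive measures (not merely those of finite energy), together with the bookkeeping required for the distinguished point $y$ and its image $\omega_{\mathbb R^n}$ when transporting the defining n.e.\ equalities for the equilibrium measure across the inversion.
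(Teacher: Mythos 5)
Your overall strategy --- realizing $\varepsilon_y^A$ through the Kelvin transform with respect to $S(y,1)$ and the (extended-sense) $\alpha$-Riesz equilibrium measure $\gamma_{A^*}$ of the inverse set --- is the paper's. One sub-argument is genuinely different: for the implication $\varepsilon_y^A(\{y\})>0\Rightarrow\varepsilon_y^A=\varepsilon_y$ you use positive homogeneity of balayage and the minimality characterization of Lemma~\ref{l-char} applied to the source $(1-c)\varepsilon_y$ with $\sigma=\varepsilon_y^A-c\,\varepsilon_y$ as competitor, whereas the paper normalizes $\chi=\varepsilon_y^A-c\,\varepsilon_y$ by $1/(1-c)$ and Kelvin-transforms $\chi_1$ into a putative equilibrium measure of $A^*\cap\mathbb R^n$, contradicting regularity. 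Your variant is correct and avoids a second appeal to the inversion at that point; the decisive step (excluding $c=0$) is carried out identically in both arguments.

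There is, however, a small but genuine gap in your second assertion. You derive only $\kappa_\alpha\tilde\mu=\kappa_\alpha\varepsilon_y$ n.e.\ on $A$ and $\kappa_\alpha\tilde\mu\leqslant\kappa_\alpha\varepsilon_y$ everywhere, and then assert that Lemma~\ref{l-char} identifies $\tilde\mu$ with $\varepsilon_y^A$. These two relations alone do not determine the balay\'ee within $\mathfrak M^+(A)$: for an $\alpha$-irregular $y\in A$ the Dirac measure $\varepsilon_y$ itself satisfies both, yet $\varepsilon_y^A\ne\varepsilon_y$ there. From your data Lemma~\ref{l-char} yields only the one-sided inequality $\kappa_\alpha\varepsilon_y^A\leqslant\kappa_\alpha\tilde\mu$, because $\tilde\mu$ is an admissible competitor; you still need the reverse inequality. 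The paper closes this by transporting the minimality property (\ref{sec2-4}) of $\gamma_{A^*}$ (namely $\kappa_\alpha\gamma_{A^*}=\min\kappa_\alpha\theta$ over all $\theta$ with $\kappa_\alpha\theta\geqslant1$ n.e.\ on $A^*$) through the Kelvin transform: this is precisely the statement that $\tilde\mu$ attains the minimum in (\ref{bal-char}) for $\mu=\varepsilon_y$, whence $\tilde\mu=\varepsilon_y^A$. Your plan records only properties (\ref{sec2-2})--(\ref{sec2-3}) of $\gamma_{A^*}$, so you must add this use of (\ref{sec2-4}); with that one-line repair the proof is complete, and the $c_\alpha$-absolute continuity then transfers from $\gamma_{A^*}$ exactly as you say.
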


\begin{proof}We first need to recall the well-known notion of Kelvin transform of measures (see \cite{R} and \cite[pp.~260--261]{L}).

Define the inversion with respect to $S(y,1)$ mapping each point $x\ne y$ to the point~$x^*$ on the ray through~$x$ issuing from~$y$ which is determined uniquely by
\[|x-y|\cdot|x^*-y|=1.\]
This is a homeomorphism of $\mathbb
R^n\setminus\{y\}$ onto itself; furthermore,
\begin{equation}\label{inv}|x^*-z^*|=\frac{|x-z|}{|x-y||z-y|}.\end{equation}
It can be extended to a homeomorphism of $\overline{\mathbb R^n}$ onto itself such that $y$ and $\omega_{\mathbb R^n}$ are mapped to each other.

To each $\nu\in\mathfrak M$ with
$\nu(\{y\})=0$ we assign the Kelvin transform
$\nu^*\in\mathfrak M$ by means of the formula
\begin{equation}\label{kelv-m}d\nu^*(x^*)=|x-y|^{\alpha-n}\,d\nu(x),\quad x^*\in\mathbb R^n.\end{equation}
Then, in view of (\ref{inv}),
\begin{equation}\label{KP}\kappa_\alpha\nu^*(x^*)=|x-y|^{n-\alpha}\kappa_\alpha\nu(x),\quad x^*\in\mathbb R^n,\end{equation}
and therefore
\begin{equation}\label{K}\kappa_\alpha(\mu^*,\nu^*)=\kappa_\alpha(\mu,\nu)\end{equation}
for every $\mu\in\mathfrak M$ with $\mu(\{y\})=0$. The last display is obtained by multiplying (\ref{kelv-m}) (with $\mu$ in place of $\nu$) by (\ref{KP}) and next integrating with respect to $d\mu(x)$ over~$\mathbb R^n$.\footnote{Each of equalities (\ref{KP}) and (\ref{K}) is understood in the sense that the value on the left is well-defined if (and only if) so is that on the right, and then they coincide.}
Furthermore, by (\ref{kelv-m}), $\nu^*(\mathbb R^n)=\kappa_\alpha\nu(y)$, which in view of the relation $(\nu^*)^*=\nu$ proves the equality
\begin{equation}\label{kelv-mmm}\nu(\mathbb R^n)=\kappa_\alpha\nu^*(y).\end{equation}

For the proof of Theorem~\ref{equiv}, fix a point $y\in\mathbb R^n$ and consider $A^*$, the inverse of $A\cup\{\omega_{\mathbb R^n}\}$ with respect to $S(y,1)$.
Having assumed that $y$ is an $\alpha$-regular point of $A$ we first assert that then $\varepsilon_y^A(\{y\})>0$.  Indeed, if not, then by (\ref{KP}) the Kelvin transform $\bigl(\varepsilon_y^A\bigr)^*$ of $\varepsilon_y^A$ has the $\alpha$-Riesz potential equal to $1$ n.e.\ on $A^*\cap\mathbb R^n$, which means that $\bigl(\varepsilon_y^A\bigr)^*$ is the $\alpha$-Riesz equilibrium measure on $A^*\cap\mathbb R^n$, treated in the sense of \cite[Chapter~V, Section~1]{L}. Hence, $A^*\cap\mathbb R^n$ is $\alpha$-thin at $\omega_{\mathbb R^n}$, cf.\ Section~\ref{sec-thin}, which contradicts the $\alpha$-regularity of~$y$. We next proceed by proving that the relation $\varepsilon_y^A(\{y\})>0$ thus obtained yields $\varepsilon_y^A=\varepsilon_y$. Indeed, if not, then $\varepsilon_y^A=c\varepsilon_y+\chi$, where $\chi\in\mathfrak M^+(A\setminus\{y\})$, $\chi\ne0$, and $0<c<1$, the latter inequality being clear from relation (\ref{t-mass}) applied to $\mu=\varepsilon_y$.  Then, by (\ref{eq-bala-f1}), cf.\ Theorem~\ref{th-bala-2},
\[|x-y|^{\alpha-n}=\kappa_\alpha\varepsilon_y^A(x)=c|x-y|^{\alpha-n}+\kappa_\alpha\chi(x)\quad\text{n.e.\ on \ }A,\]
hence $\kappa_\alpha\chi_1(x)=|x-y|^{\alpha-n}$ n.e.\ on $A$, where $\chi_1:=\chi/(1-c)$. Since $\chi_1(\{y\})=0$, (\ref{KP}) applied to $\nu=\chi_1$ shows that the Kelvin transform of $\chi_1$ is the equilibrium measure on $A^*\cap\mathbb R^n$, which is impossible by the $\alpha$-regularity of $y$.

To establish the latter statement of the theorem, suppose first that $y\in A$ is $\alpha$-irregular. Then the (unbounded closed) set $A^*\cap\mathbb R^n$ is $\alpha$-thin at $\omega_{\mathbb R^n}$ and hence there exists the equilibrium measure $\gamma_{A^*}\in\mathfrak M^+(A^*\cap\mathbb R^n)$ on $A^*\cap\mathbb R^n$, which is characterized uniquely by relations (\ref{sec2-1})--(\ref{sec2-4}) with $A^*$ in place of $A$.
Denoting by $\delta$ the Kelvin transform of $\gamma_{A^*}$, we conclude from (\ref{sec2-2}) (with $\gamma_{A^*}$ instead of $\gamma_A$) and (\ref{KP}) that
\[\kappa_\alpha\delta(x)=|x-y|^{\alpha-n}=\kappa_\alpha\varepsilon_y(x)\quad\text{n.e.\ on \ }A.\]
Here we have used the fact that the assertions $c_\alpha(E^*)=0$ and $c_\alpha(E)=0$, $E\subset A$, are equivalent, cf.\ \cite[p.~261]{L}. This observation also yields that $\delta$ is $c_\alpha$-absolutely continuous along with $\gamma_{A^*}$. Using (\ref{sec2-4}) (with $\gamma_{A^*}$ instead of $\gamma_A$), we also observe that $\delta$ satisfies (\ref{bal-char}) for $\varepsilon_y$ in place of $\mu$, and so the ($c_\alpha$-absolutely continuous) measure $\delta$ is, in fact, the swept measure $\varepsilon_y^A$.

Finally, suppose that $y\in\mathbb R^n\setminus A$. Then the inverse $A^*$ of $A\cup\{\omega_{\mathbb R^n}\}$ is a compact subset of $\mathbb R^n$ containing $y$. The rest of the proof runs in the same way as in the preceding paragraph, even with the standard notion of the $\alpha$-Riesz equilibrium measure~$\gamma_{A^*}$.\end{proof}

\begin{remark}\label{remark3.12} Theorem~\ref{equiv} is a particular case of results obtained in \cite{BH} in the very general setting of balayage spaces. The former assertion follows from \cite[Chapter~VII, Proposition 3.1]{BH} and the latter by combining \cite[Chapter~VI, Proposition 5.6]{BH} and \cite[Chapter~VII, Proposition 4.1]{BH}. We have, however, chosen to bring the above alternative proof based on the Kelvin transform because we want to make a presentation of our results based on a single approach, while for this purpose the general balayage theory is insufficient anyway, cf.\ the Introduction for details. Moreover, the relation between $\varepsilon_y^A$ and the $\alpha$-Riesz equilibrium measure $\gamma_{A^*}$ of $A^*$, given with the aid of the Kelvin transform, is decisive for the proof of Theorem \ref{bal-mass-th} below.
\end{remark}

\begin{corollary}\label{cor-bal-reg}For any\/ $\mu\in\mathfrak M^+$ we have
\begin{equation}\label{eq-reg1}\kappa_\alpha\mu^A=\kappa_\alpha\mu\quad\text{everywhere on \ }A\setminus A_I.\end{equation}
\end{corollary}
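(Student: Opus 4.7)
The plan is to derive equation (\ref{eq-reg1}) as an immediate consequence of the integral representation (\ref{symD2}) from Corollary~\ref{cor-sym} combined with the first assertion of Theorem~\ref{equiv}. Since relation (\ref{eq-bala-f1}) only gives $\kappa_\alpha\mu^A=\kappa_\alpha\mu$ n.e.\ on $A$, the point of this corollary is to upgrade ``n.e.\ on $A$'' to ``everywhere on $A\setminus A_I$'', i.e.\ to show that the exceptional set lies entirely inside the set $A_I$ of $\alpha$-irregular points.

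Fix $y\in A\setminus A_I$, so that $y$ is an $\alpha$-regular point of $A$. By the first part of Theorem~\ref{equiv} we have $\varepsilon_y^A=\varepsilon_y$, hence
\[\kappa_\alpha\varepsilon_y^A(x)=\kappa_\alpha\varepsilon_y(x)=|x-y|^{\alpha-n}\quad\text{for every \ }x\in\mathbb R^n.\]
On the other hand, by the integral representation (\ref{symD2}) of Corollary~\ref{cor-sym} applied to an arbitrary $\mu\in\mathfrak M^+$,
\[\kappa_\alpha\mu^A(y)=\int\kappa_\alpha\varepsilon_y^A(x)\,d\mu(x).\]
Combining the two displays yields
\[\kappa_\alpha\mu^A(y)=\int|x-y|^{\alpha-n}\,d\mu(x)=\kappa_\alpha\mu(y),\]
which is exactly (\ref{eq-reg1}), valid pointwise on $A\setminus A_I$.

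There is no real obstacle here: the whole argument is just a reduction to the Dirac case already settled in Theorem~\ref{equiv}, the symmetry relation (\ref{alternative2}) having been used in the form (\ref{symD2}) to push the information from $\varepsilon_y^A=\varepsilon_y$ out to a general $\mu\in\mathfrak M^+$. The only non-obvious ingredient is the identification $\varepsilon_y^A=\varepsilon_y$ at $\alpha$-regular points $y\in A$, whose proof (via the Kelvin transform and the characterization of $\alpha$-thinness at $\omega_{\mathbb R^n}$) has already been carried out above.
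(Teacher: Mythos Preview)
Your proof is correct and follows exactly the same route as the paper: apply Theorem~\ref{equiv} to obtain $\varepsilon_y^A=\varepsilon_y$ at an $\alpha$-regular point $y$, and then use the integral representation (\ref{symD2}) to conclude $\kappa_\alpha\mu^A(y)=\kappa_\alpha\mu(y)$. The paper's version is simply a one-line compression of what you wrote out in detail.
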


\begin{proof}
Indeed, for every $\alpha$-regular point $y\in A$, $\varepsilon_y^A=\varepsilon_y$ by Theorem~\ref{equiv}, and therefore $\kappa_\alpha\mu^A(y)=\kappa_\alpha\mu(y)$ by (\ref{symD2}).\end{proof}

\begin{corollary}\label{reg-com} Assume\/ $A$ to be\/ $\alpha$-thin at\/ $\omega_{\mathbb R^n}$. Then
\begin{equation}\label{eq-reg2}\kappa_\alpha\gamma_{A,\alpha}=1\quad\text{everywhere on \ }A\setminus A_I.\end{equation}
\end{corollary}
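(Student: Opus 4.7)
The plan is to reduce the claim to the compact case, which is already accessible via Corollary~\ref{cor-bal-reg}, and then pass to the limit through an exhaustion of $A$. Before exhausting, I would first settle the compact case: for compact $K\subset\mathbb R^n$ with $0<c_\alpha(K)<+\infty$, choose $R$ with $K\subset\overline B_R$ and let $\gamma_R$ be the $\alpha$-Riesz equilibrium measure of $\overline B_R$. Since $\kappa_\alpha\gamma_R=1$ on $\overline B_R\supset K$ (as used in the proof of Theorem~\ref{cor-mass'}), Lemma~\ref{l-char} characterizes the swept potential $\kappa_\alpha\gamma_R^K$ as the pointwise minimum of $\kappa_\alpha\xi$ over positive $\xi$ with $\kappa_\alpha\xi\geqslant 1$ n.e.\ on $K$; but by (\ref{sec2-4}) that minimum equals $\kappa_\alpha\gamma_K$, so $\gamma_R^K=\gamma_K$. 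Applying Corollary~\ref{cor-bal-reg} to $\mu:=\gamma_R$ and $A:=K$ then yields $\kappa_\alpha\gamma_K(y)=\kappa_\alpha\gamma_R(y)=1$ for every $\alpha$-regular $y\in K$.

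For the general $A$, I would set $B_k:=A\cap\overline B_k$, which is compact with $0<c_\alpha(B_k)<+\infty$ for $k$ sufficiently large, and work with its equilibrium measure $\gamma_{B_k}$. Since $B_k\subset B_{k+1}\subset A$, both $\gamma_{B_{k+1}}$ and $\gamma_A$ satisfy $\kappa_\alpha(\cdot)\geqslant 1$ n.e.\ on $B_k$ and thus lie in $\Theta_{B_k}$; so by (\ref{sec2-4}) one has $\kappa_\alpha\gamma_{B_k}\leqslant\kappa_\alpha\gamma_{B_{k+1}}\leqslant\kappa_\alpha\gamma_A\leqslant 1$ everywhere on $\mathbb R^n$. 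By \cite[Theorem~3.9]{L} there is then $\gamma\in\mathfrak M^+$ with $\gamma_{B_k}\to\gamma$ vaguely and $\kappa_\alpha\gamma_{B_k}\uparrow\kappa_\alpha\gamma$ pointwise; closedness of $A$ forces $\gamma\in\mathfrak M^+(A)$, and in the limit $\kappa_\alpha\gamma\leqslant\kappa_\alpha\gamma_A$ everywhere.

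To finish, I fix $y\in A\setminus A_I$ and invoke the local nature of $\alpha$-thinness embodied in Wiener's criterion~(\ref{W}): its summands depend only on $A$ in arbitrarily small neighborhoods of $y$, so once $y$ lies in the interior of $\overline B_k$, the criteria for $A$ and for $B_k$ agree from some index on and $y\notin(B_k)_I$. The first paragraph then gives $\kappa_\alpha\gamma_{B_k}(y)=1$, and letting $k\to\infty$ yields $\kappa_\alpha\gamma(y)=1$. Since every $y\in A$ eventually lies in the interior of some $\overline B_k$ and $c_\alpha(A_I)=0$, we have $\kappa_\alpha\gamma=1$ n.e.\ on $A$; combined with $\kappa_\alpha\gamma\leqslant 1$ everywhere, the minimality~(\ref{sec2-4}) of $\gamma_A$ forces $\kappa_\alpha\gamma_A\leqslant\kappa_\alpha\gamma$, which together with the reverse inequality from the previous paragraph yields $\kappa_\alpha\gamma=\kappa_\alpha\gamma_A$ and hence $\kappa_\alpha\gamma_A(y)=1$. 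The main obstacle I anticipate is precisely this locality step: one must carefully justify that if $A$ is not $\alpha$-thin at $y$, then neither is $B_k=A\cap\overline B_k$ once $y$ lies in the interior of $\overline B_k$, which I would handle by a direct tail comparison of the Wiener series for $A$ and $B_k$.
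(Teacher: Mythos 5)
Your proof is correct, but it takes a genuinely different route from the paper. The paper disposes of the corollary in a few lines via the Kelvin transform: it fixes $y\notin A$, notes that the inversion with respect to $S(y,1)$ carries $A\setminus A_I$ onto $A^*\setminus A^*_I$, recalls from the proof of Theorem~\ref{equiv} that $\gamma_{A,\alpha}$ is the Kelvin transform of $\varepsilon_y^{A^*}$, and then reads off (\ref{eq-reg2}) from (\ref{eq-reg1}) applied to $\varepsilon_y^{A^*}$ on $A^*$ together with the transformation formula (\ref{KP}). You avoid the Kelvin transform entirely and instead argue by compact exhaustion plus the locality of Wiener's criterion; the tail-comparison step you flag as the main obstacle is indeed sound, since for $k$ with $q^k$ smaller than the distance from $y$ to $\partial\overline B_m$ the annular pieces of $A$ and of $B_m=A\cap\overline B_m$ coincide, so the two Wiener series have identical tails. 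Your treatment of the compact case (identifying $\kappa_\alpha\gamma_R^K$ with $\kappa_\alpha\gamma_K$ via Lemma~\ref{l-char} and (\ref{sec2-4}), then invoking Corollary~\ref{cor-bal-reg}) is also correct. What each approach buys: the paper's argument is essentially free given the machinery already assembled for Theorem~\ref{equiv}, whereas yours is more self-contained and in particular gives the compact case by a direct argument. One remark: your third paragraph is longer than necessary --- once you know $y\in B_k\setminus(B_k)_I$ and $\kappa_\alpha\gamma_{B_k}\leqslant\kappa_\alpha\gamma_A\leqslant1$ everywhere (from (\ref{sec2-4}) and (\ref{sec2-3})), the chain $1=\kappa_\alpha\gamma_{B_k}(y)\leqslant\kappa_\alpha\gamma_A(y)\leqslant1$ already yields (\ref{eq-reg2}) at $y$, so the construction of the limit measure $\gamma$ via \cite[Theorem~3.9]{L} and the subsequent identification $\kappa_\alpha\gamma=\kappa_\alpha\gamma_A$ can be deleted without loss.
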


\begin{proof}Fix $y\notin A$ and consider the inversion with respect to $S(y,1)$. It follows from \cite[Chapter~IV, Section~5, n$^\circ$\,19]{L} (see the first two displays on p.~261 therein) and Wiener's criterion (\ref{W}) that then $A\setminus A_I$ is mapped onto $A^*\setminus A^*_I$, where $A^*$ is the inverse of $A\cup\{\omega_{\mathbb R^n}\}$. As seen from the proof of Theorem~\ref{equiv} (with $A$ replaced by $A^*$), the equilibrium measure $\gamma_{A,\alpha}$ is the Kelvin transform of the swept measure $\varepsilon_y^{A^*}$. Combined with equalities (\ref{KP}) and (\ref{eq-reg1}) this establishes (\ref{eq-reg2}).\end{proof}

If now $\nu\in\mathfrak M(\mathbb R^n)$ is a {\it signed\/} (Radon) measure, then $\nu^A:=(\nu^+)^A-(\nu^-)^A$ is said to be a {\it balay\'ee\/} of $\nu$ onto the (closed) set $A$. The balay\'ee $\nu^A$ is unique, for so are $(\nu^+)^A$ and $(\nu^-)^A$, and it is supported by $A$.
Its $\alpha$-Riesz potential $\kappa_\alpha\nu^A$ is well-defined and finite n.e.\ on $\mathbb R^n$, and $\kappa_\alpha\nu^A(x)=\kappa_\alpha\nu(x)$ at every $x\in A\setminus A_I$ where either of $\kappa_\alpha\nu^\pm(x)$ is finite, cf.\ Corollary~\ref{cor-bal-reg}.

\subsection{$\mu$-adequate family of measures. Integral representation of $\mu^A$}\label{adequate1} For the notion of a $\mu${\it -adequate\/} family of measures, see \cite[Section~3, D\'efinition~1]{Bou}. Write $D:=A^c$.

\begin{lemma}\label{adequate} For every $\mu\in\mathfrak M^+(D)$ the family $(\varepsilon_y^A)_{y\in D}$ is $\mu$-adeq\-uate, that is,
\begin{itemize}
\item[\rm{(a)}] for any function $f\in C_0(\mathbb R^n)$ the numerical function $y\mapsto\int f\,d\varepsilon_y^A$ on $D$ is essentially $\mu$-in\-tegrable;
\item[\rm{(b)}] the map $y\mapsto\varepsilon_y^A$ is vaguely $\mu$-measurable on $D$.
\end{itemize}
\end{lemma}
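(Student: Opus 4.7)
The plan is to prove the stronger statement that $y\mapsto\varepsilon_y^A$ is vaguely continuous from $D$ into $\mathfrak M^+(\mathbb R^n)$. Granting this, both conclusions follow: for $f\in C_0(\mathbb R^n)$ the function $y\mapsto\varepsilon_y^A(f)$ is then continuous on $D$ and uniformly bounded by $\|f\|_\infty$ (via Theorem~\ref{cor-mass'}), hence essentially $\mu$-integrable, which gives~(a); and vague continuity of the measure-valued map is a fortiori vague $\mu$-measurability, giving~(b).

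The engine for vague continuity is the symmetry identity~(\ref{alternative2}): for any bounded $\sigma\in\mathfrak M^+$,
\[\int \kappa_\alpha\sigma\,d\varepsilon_y^A=\kappa_\alpha(\sigma,\varepsilon_y^A)=\kappa_\alpha\sigma^A(y),\quad y\in D.\]
I would first verify that the right-hand side is continuous on $D$ for every bounded $\sigma\in\mathfrak M^+$. Indeed, $\sigma^A$ is then likewise bounded by Theorem~\ref{cor-mass'} and carried by the closed set $A$; for $y$ ranging over a compact neighborhood $V\subset D$ of a given $y_0\in D$, the integrand $x\mapsto|x-y|^{\alpha-n}$ is uniformly bounded on $A$ by $d(V,A)^{\alpha-n}$ (we use $\alpha<n$), so dominated convergence yields continuity of $\kappa_\alpha\sigma^A$ at $y_0$. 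Thus $y\mapsto\varepsilon_y^A(\kappa_\alpha\sigma)$ is continuous on $D$ whenever $\sigma\in\mathfrak M^+$ is bounded.

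To upgrade this to continuity of $y\mapsto\varepsilon_y^A(f)$ for arbitrary $f\in C_0(\mathbb R^n)$, I would invoke the Riesz inversion formula. For $f\in C_c^\infty(\mathbb R^n)$, the function $g:=c_{n,\alpha}(-\Delta)^{\alpha/2}f$ is smooth, and for $x$ outside $\mathrm{supp}(f)$ admits the pointwise representation $g(x)=-C_{n,\alpha}\int f(y)|x-y|^{-n-\alpha}\,dy$, which forces the decay bound $|g(x)|=O(|x|^{-n-\alpha})$ at infinity; in particular $g\in L^1(\mathbb R^n)\cap L^\infty(\mathbb R^n)$ and $f=\kappa_\alpha g$ pointwise. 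Writing $g=g^+-g^-$ and setting $\sigma^\pm:=g^\pm\,dx$ gives bounded positive measures with $f=\kappa_\alpha\sigma^+-\kappa_\alpha\sigma^-$, whence
\[\varepsilon_y^A(f)=\kappa_\alpha(\sigma^+)^A(y)-\kappa_\alpha(\sigma^-)^A(y)\]
is continuous on $D$ by the previous paragraph. For general $f\in C_0(\mathbb R^n)$, uniform approximation by $f_n\in C_c^\infty(\mathbb R^n)$ combined with the bound $\varepsilon_y^A(\mathbb R^n)\leqslant 1$ yields $\varepsilon_y^A(f_n)\to\varepsilon_y^A(f)$ uniformly in $y\in D$, so continuity persists in the uniform limit. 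This establishes the desired vague continuity of $y\mapsto\varepsilon_y^A$ on $D$.

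The main technical obstacle is the decay estimate $|g(x)|=O(|x|^{-n-\alpha})$ at infinity for $g=c_{n,\alpha}(-\Delta)^{\alpha/2}f$ with $f\in C_c^\infty$; for $0<\alpha<2$ it comes from the singular-integral representation above, while for $\alpha=2$ it is trivial since $g=-\Delta f/c_{n,2}\in C_c^\infty$ has compact support.
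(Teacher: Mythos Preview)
Your argument for (b) is correct and is essentially the paper's: prove vague continuity of $y\mapsto\varepsilon_y^A$ on $D$ via the Riesz representation $f=\kappa_\alpha\sigma^+-\kappa_\alpha\sigma^-$ for $f\in C_c^\infty$, then pass to general $f\in C_0$ by uniform approximation using $\varepsilon_y^A(\mathbb R^n)\leqslant1$. In fact your version is slightly cleaner here: by invoking the general symmetry~(\ref{alternative2}) you only need $\sigma^\pm$ to be bounded, whereas the paper uses~(\ref{alternative}) and therefore must establish $\sigma^\pm\in\mathcal E_\alpha^+$, which costs a Kelvin-transform computation.

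There is, however, a genuine gap in your argument for (a). You assert that $y\mapsto\varepsilon_y^A(f)$ is continuous on $D$ and bounded by $\|f\|_\infty$, ``hence essentially $\mu$-integrable''. This implication is false. Since $D$ is $\sigma$-compact, essential $\mu$-integrability coincides with ordinary $\mu$-integrability (as the paper itself notes), and a bounded continuous function on $D$ is certainly \emph{not} $\mu$-integrable for an unbounded $\mu\in\mathfrak M^+(D)$. Nothing in the hypotheses forces $\mu$ to be bounded; the standing assumption~(\ref{1.3.10}) only gives $\int_{|y|>1}|y|^{\alpha-n}\,d\mu(y)<\infty$, and measures satisfying this can have infinite total mass.

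What is needed is a decay estimate matching~(\ref{1.3.10}). The paper bounds $|\varepsilon_y^A(f)|\leqslant\kappa_\alpha(\psi^+)(y)+\kappa_\alpha(\psi^-)(y)$ via~(\ref{eq-bala-f2}) and then proves the key estimate~(\ref{kappa}), namely $\kappa_\alpha\nu(y)\leqslant C\min\{1,|y|^{\alpha-n}\}$ for $\nu=\psi^\pm\,dx$; combined with~(\ref{1.3.10}) this gives $\mu$-integrability. Your decay $|g(x)|=O(|x|^{-n-\alpha})$ is exactly the input for such an estimate, but you stop short of deriving the corresponding $|y|^{\alpha-n}$ behavior of $\kappa_\alpha\sigma^\pm$ at infinity, and you never invoke~(\ref{1.3.10}). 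Without that step, (a) remains unproved.
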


\begin{proof} Fix $\mu\in\mathfrak M^+(D)$.

(a) Essential integrability over $D$ is the same as integrability because the locally compact space $D$ is countable at $\omega_D$, the Alexandroff point of~$D$ (cf.\ \cite[Section~2, Proposition~3]{Bou}).

Suppose to begin with that $f\in C_0^{\infty}(\mathbb R^n)$. As in \cite[Lemma~1.1]{L} define a function $\psi=\kappa_{-\alpha}\ast f$, which amounts to $f=\kappa_{\alpha}\psi$. The convolution $\psi$ of the distribution $\kappa_{-\alpha}$ with $f\in C_0^\infty(\mathbb R^n)$ is a $C^\infty$-function, by \cite[Th\'eor\`eme~XI]{S}.
According to \cite[Eq.~1.3.16]{L}, $\psi(x)=O(|x|^{-n-\alpha})$ as $|x|\to+\infty$. It follows that
\begin{equation}\label{psi}
\psi^{\pm}(x)\leqslant C\min\,\bigl\{1,|x|^{-n-\alpha}\bigr\},
\end{equation}
$C$ denoting a constant (not necessarily the same at each occurrence).
Denote by $\nu$ the measure on $\mathbb R^n$ with density $\psi^\pm$: $d\nu(x)=\psi^\pm(x)\,dx$, where $dx$ refers to the $n$-dimensional Lebesgue measure. We begin by proving that
$\nu\in\mathcal E^+_\alpha$. Denote by $\overline{B}=\overline{B}(0,1)$ the closed unit ball in $\mathbb R^n$ and by $\nu_0$ and $\nu_1$ the restrictions of $\nu$ to $\overline{B}$ and $\overline{B}\,^c$, respectively. Then
$\kappa_\alpha\nu_0=\kappa_\alpha\ast(1_{\overline{B}}\psi^{\pm})$, $1_{\overline{B}}$ being the indicator function for $\overline{B}$, is bounded on $\overline{B}$, and hence $\nu_0$ has finite energy $\kappa_\alpha(\nu_0,\nu_0)$. Furthermore, $\kappa_\alpha\nu_0(x)=O\bigl(|x|^{\alpha-n}\bigr)$ as $|x|\to+\infty$,
and so altogether
  \begin{equation}\label{3.24a}\kappa_\alpha\nu_0(x)\leqslant C\min\,\bigl\{1,|x|^{\alpha-n}\bigr\}.
  \end{equation}

Next, let $\nu_1^*$ denote the image of $\nu_1$ under Kelvin transformation with respect to the unit circle $S(0,1)$ (noting that  $\nu_1(\{0\})=0$). By (\ref{kelv-m}) and (\ref{KP}) (both with $y=0$),
 \begin{equation}\label{kelvin}
  d\nu_1^*(x^*)=|x|^{\alpha-n}\,d\nu_1(x),
\quad\kappa_\alpha\nu_1^*(x^*)=|x|^{n-\alpha}\kappa_\alpha\nu_1(x).
\end{equation}
Hence $\nu_1$ and $\nu_1^*$ have the same $\alpha$-Riesz energy, cf.\  (\ref{K}).
According to inequality (\ref{psi}),
\[d\nu_1^*(x^*)=|x|^{\alpha-n}1_{\overline{B}\,^c}(x)\psi^{\pm}(x)\,dx\leqslant C|x|^{\alpha-n}|x|^{-\alpha-n}\,dx=C|x|^{-2n}\,dx=C\,dx^*,
\]
the latter equality being valid because $|x|^{-n}\,dx=|x^*|^n\,dx^*$. In fact, write $x=r\xi$ with $r=|x|$ and where $\xi$ ranges over the unit sphere $S(0,1)$ endowed with its surface measure $d\xi$. We obtain $dx=r^{n-1}\,dr\,d\xi$ and similarly $dx^*=(r^*)^{n-1}\,dr^*\,d\xi$ with $r^*=r^{-1}$, hence $dr^*=-r^{-2}\,dr$. We may neglect the minus sign (change of orientation) and conclude that indeed $dx^*=|x|^{-2n}\,dx$.

Thus the situation for $\nu_1^*$ is essentially the same as above for $\nu_0$, both being supported by the ball $\overline{B}$ and having a bounded density, and so
\[\kappa_\alpha\nu_1^*(x^*)\leqslant C\min\,\bigl\{1,|x^*|^{\alpha-n}\bigr\}\]
and hence, by the latter equation (\ref{kelvin}),
\[\kappa_\alpha\nu_1(x)=|x|^{\alpha-n}\kappa_\alpha\nu_1^*(x^*)
\leqslant C\min\,\bigl\{1,|x|^{\alpha-n}\bigr\}.\]
When combined with inequality (\ref{3.24a}) this leads to
\begin{equation}\label{kappa}
  \kappa_\alpha\nu(x)\leqslant C\min\,\bigl\{1,|x|^{\alpha-n}\bigr\}.
\end{equation}
In particular, $\kappa_\alpha\nu_1(x)\leqslant C|x|^{\alpha-n}$ on $\overline{B}\,^c$ and
\[\kappa_\alpha(\nu_1,\nu_1)=\int\kappa_\alpha\nu_1\,d\nu_1\leqslant C\int|x|^{\alpha-n}\,d\nu_1(x)=C\kappa_\alpha\nu_1(0)<+\infty.\]
As $\nu=\nu_0+\nu_1$, we thus get
\begin{equation}\label{ad1}\nu\in\mathcal E_\alpha^+.\end{equation}

Identifying the measures $\psi^+\,dx$ and $\psi^-\,dx$ with their densities $\psi^+$ and $\psi^-$, respectively,  we obtain
\[\int f\,d\varepsilon_y^A=\int\kappa_\alpha\psi^+\,d\varepsilon_y^A-\int\kappa_\alpha\psi^-\,d\varepsilon_y^A=\kappa_\alpha(\psi^+)^A(y)-\kappa_\alpha(\psi^-)^A(y)
\]
according to (\ref{alternative}) applied to $\mu=\varepsilon_y$ and $\lambda=\nu=\psi^\pm\,dx$. The last member in the above display is the difference between two finite l.s.c.\ functions. For the proof that $y\mapsto\int f\,d\varepsilon_y^A=\kappa_\alpha\psi^A$ is $\mu$-integrable it suffices to show that $\int\kappa_\alpha\nu\,d\mu<+\infty$. According to inequality (\ref{kappa}) we obtain
\[\int_{\overline{B}}\kappa_\alpha\nu\,d\mu\leqslant C\int_{\overline{B}}\,d\mu<+\infty\]
and
\[\int_{{\overline{B}}\,^c}\kappa_\alpha\nu\,d\mu
\leqslant C\int_{{\overline{B}}\,^c}|x|^{\alpha-n}\,d\mu(x)=C\int_{\overline{B}}\,d\mu^*(x^*)<+\infty,\]
the equality being valid by the former equation (\ref{kelvin}) with $\nu_1$ replaced by $1_{{\overline{B}}^c}\mu$, assuming that $\mu\bigl(\{0\}\bigr)=0$. If $\mu\bigl(\{0\}\bigr)>0$ we remove the mass at $0$ from $\mu$, which does not affect the $\mu$-integrability of the finite valued function $\kappa_\alpha\nu$.

For general $f\in C_0(\mathbb R^n)$, or just as well $f\in C_0^+(\mathbb R^n)$, we regularize $f$
in the standard way, as in \cite[p.~22]{S}, thereby obtaining a sequence of positive functions $f_j\in C_0^\infty(\mathbb R^n)$ supported by a fixed compact neighborhood of the support of $f$ and such that $f_j$ converges uniformly to $f$. Since for every $y\in\mathbb R^n$, $\varepsilon_y^A(\mathbb R^n)\leqslant1$ by inequality (\ref{t-mass}), it follows that the sequence $\int f_j\,d\varepsilon_y^A$ converges uniformly on $\mathbb R^n$ to $\int f\,d\varepsilon_y^A$. As shown above, each of the functions $y\mapsto\int f_j\,d\varepsilon_y^A$ is $\mu$-integrable, and so is therefore their uniform limit $\int f\,d\varepsilon_y^A$ (see \cite[Chapter~IV, Section~3, Proposition~4]{B2}).

(b) For the proof that the map
$D\ni y\mapsto\varepsilon_y^A\in\mathfrak M^+(D)$ is vaguely $\mu$-meas\-urable, cf.\ \cite[Section~3, n$^\circ$\,1]{Bou}, it suffices according to \cite[Section~1, n$^\circ$\,2]{Bou} to show that this map is {\it vaguely continuous\/} on $D$. (As pointed out in \cite[p.~18, Remarque]{Bou} it is not enough to verify that each of the functions $y\mapsto\varepsilon_y^A$ is $\mu$-measurable, as it is done in  \cite[p.~214, footnote~12]{L}.)

Likewise as in the proof of assertion (a) above, consider first a function $f\in C_0^\infty(\mathbb R^n)$ and choose a signed measure $\psi\in\mathcal E_\alpha$ so that $\kappa_\alpha\psi=f$. According to relation (\ref{ad1}), $\psi^{\pm}\in\mathcal E^+_\alpha$, which in view of (\ref{alternative}) for $\mu=\varepsilon_y$ and $\lambda=\psi^\pm$ yields
\[\int f\,d\varepsilon_y^A
=\int\kappa_\alpha\psi\,d\varepsilon_y^A
=\int\kappa_\alpha\psi^A\,d\varepsilon_y
=\kappa_\alpha\psi^A(y).
\]
When varying $y$, $\kappa_\alpha\psi^A(y)$ is a (finite and) continuous function of $y\in D$ (because $(\psi^{\pm})^A$ is supported by~$A$), and so is therefore $\int f\,d\varepsilon_y^A$ in the present case $f\in C_0^\infty(\mathbb R^n)$. But the same holds for
any $f\in C_0(\mathbb R^n)$. Indeed, likewise as above, one may choose a sequence of $C_0^\infty$-fun\-ctions $f_j$ on $\mathbb R^n$ converging uniformly to the given function $f\in C_0(\mathbb R^n)$. Then, by relation (\ref{t-mass}) for $\mu=\varepsilon_y$,
 \[\Bigl|\int(f-f_j)\,d\varepsilon_y^A\Bigr|\leqslant\sup_j\,|f-f_j|\to0\quad\text{(as $j\to+\infty$)},\]
and so $\int f\,d\varepsilon_y^A$ is indeed a (finite) continuous function of $y\in D$, being the uniform limit of the continuous functions $\int f_j\,d\varepsilon_y^A$ on~$D$.\end{proof}

\begin{theorem}\label{th-int-rep}For any $\mu\in\mathfrak M^+(D)$, we have the integral representation
  \begin{equation}\label{eq-int-rep}\mu^A=\int\varepsilon_y^A\,d\mu(y).
\end{equation}\end{theorem}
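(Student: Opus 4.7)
The plan is to identify $\nu := \int \varepsilon_y^A\,d\mu(y)$ as the balayée of $\mu$ by verifying the defining symmetry relation from Definition~\ref{def-swpt} and then invoking the uniqueness of balayée established in Theorem~\ref{th-bala-2}.

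First, Lemma~\ref{adequate} guarantees that the family $(\varepsilon_y^A)_{y\in D}$ is $\mu$-adequate, so by \cite[Section~3, Proposition~1]{Bou} the integral $\nu:=\int\varepsilon_y^A\,d\mu(y)$ is a well-defined positive Radon measure on $\mathbb R^n$, characterized by
\[\int f\,d\nu=\int\left(\int f\,d\varepsilon_y^A\right)d\mu(y)\quad\text{for all \ }f\in C_0(\mathbb R^n).\]
Since each $\varepsilon_y^A$ is carried by the closed set $A$, so is $\nu$, i.e.\ $\nu\in\mathfrak M^+(A)$.

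By Definition~\ref{def-swpt} and the uniqueness assertion in Theorem~\ref{th-bala-2}, it is enough to show that
\begin{equation}\label{plan-goal}
\kappa_\alpha(\nu,\lambda)=\kappa_\alpha(\mu,\lambda^A)\quad\text{for all \ }\lambda\in\mathcal E^+_\alpha.
\end{equation}
Fix $\lambda\in\mathcal E^+_\alpha$. I would compute
\[\kappa_\alpha(\nu,\lambda)=\int\kappa_\alpha\lambda\,d\nu=\int\left(\int\kappa_\alpha\lambda\,d\varepsilon_y^A\right)d\mu(y)=\int\kappa_\alpha\lambda^A(y)\,d\mu(y)=\kappa_\alpha(\mu,\lambda^A),\]
where the middle equality is the Fubini-type identity for $\mu$-adequate families, and the third equality is the symmetry of sweeping (\ref{symD2}) applied to $\lambda$ in place of $\mu$: $\int\kappa_\alpha\lambda\,d\varepsilon_y^A=\kappa_\alpha(\varepsilon_y^A,\lambda)=\kappa_\alpha(\varepsilon_y,\lambda^A)=\kappa_\alpha\lambda^A(y)$. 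This gives (\ref{plan-goal}), and uniqueness yields $\nu=\mu^A$.

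The main technical obstacle is to justify the Fubini step, since $\kappa_\alpha\lambda$ is merely nonneg\-ative and l.s.c., not in $C_0(\mathbb R^n)$. I would handle this by representing $\kappa_\alpha\lambda$ as the pointwise supremum of an upward-directed net (or increasing sequence, using $\sigma$-compact\-ness of $\mathbb R^n$) $(f_j)$ in $C_0^+(\mathbb R^n)$; then the adequate-integral identity holds for each $f_j$, and passing to the supremum via the monotone convergence theorem applied both to $\nu$ and to $\mu$ (the latter after interchanging with $\int\cdot\,d\varepsilon_y^A$, which is legitimate by monotone convergence on $\varepsilon_y^A$) transfers the identity to $\kappa_\alpha\lambda$. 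All three integrals are a priori $\leqslant+\infty$, so no delicate finiteness issue obstructs the interchange; the computation leading to $\kappa_\alpha(\mu,\lambda^A)$ then shows a posteriori that they are finite since $\lambda\in\mathcal E_\alpha^+$ and, by Corollary~\ref{ineq-en}, $\lambda^A\in\mathcal E_\alpha^+$.
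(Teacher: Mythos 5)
Your proof is correct and follows essentially the same route as the paper: both define $\nu=\int\varepsilon_y^A\,d\mu(y)$ via the $\mu$-adequacy of Lemma~\ref{adequate}, extend the defining identity from $C_0(\mathbb R^n)$ to positive l.s.c.\ integrands (the paper cites \cite[Section~3, Proposition~1]{Bou} for exactly this step, which your monotone-approximation argument re-derives), and verify (\ref{alternative}) using the symmetry $\kappa_\alpha(\varepsilon_y^A,\lambda)=\kappa_\alpha(\varepsilon_y,\lambda^A)$ — the only cosmetic difference being that the paper applies the extension to $f=\kappa_\alpha(x,\cdot)$ and then interchanges with $d\lambda(x)$ by Tonelli, whereas you apply it directly to $f=\kappa_\alpha\lambda$. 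One inessential slip: your closing claim that $\kappa_\alpha(\mu,\lambda^A)$ is a posteriori finite does not follow for arbitrary $\mu\in\mathfrak M^+(D)$ (it may well be $+\infty$), but this is harmless since, as you correctly note, all integrands are positive and the identity is to be read in $[0,+\infty]$.
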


\begin{proof}
Fix $\mu\in\mathfrak M^+(D)$. Since, by Lemma~\ref{adequate}, the family of measures $(\varepsilon_y^A)_{y\in D}$ is $\mu$-adequate we may according to \cite[Section~3, n$^\circ$\,2]{Bou} define the integral $\nu=\int\varepsilon_y^A\,d\mu(y)$ by
$$
\int f(z)\,d\nu(z)=\int\Bigl(\int f(z)\,d\varepsilon_y^A(z)\Bigr)\,d\mu(y),
$$
$f\in C_0(\mathbb R^n)$ being arbitrary. According to \cite[Section~3, Proposition~1]{Bou} this identity remains valid when $f$ is allowed to be any positive l.s.c.\ function on $\mathbb R^n$ (the integrals being then understood as upper integrals).\footnote{For still more general integrands  see \cite[Section 4, Th\'eor\`eme 1]{Bou}.} For given $x\in\mathbb R^n$ we apply this to $f(z)=\kappa_\alpha(x,z)$, $z\in\mathbb R^n$:
\begin{equation}\label{repr-th1}
\kappa_\alpha\nu(x)=\int\Bigl(\int\kappa_\alpha(x,z)\,d\varepsilon_y^A(z)\Bigr)\,d\mu(y)=\int\kappa_\alpha\varepsilon_y^A(x)\,d\mu(y).
\end{equation}
To establish (\ref{eq-int-rep}) it remains to show that $\nu=\mu^A$, that is,
\[\kappa_\alpha(\nu,\lambda)=\kappa_\alpha(\mu,\lambda^A)\quad\text{for every \ }\lambda\in\mathcal E^+_\alpha,\]
cf.\ Definition~\ref{def-swpt}. Applying (\ref{alternative}) with $\varepsilon_y$ in place of $\mu$ and (\ref{repr-th1}) we get by Fubini's theorem
\begin{align*}\kappa_\alpha(\nu,\lambda)&=\int\kappa_\alpha\nu(x)\,d\lambda(x)=\int\Bigl(\int\kappa_\alpha\varepsilon_y^A(x)\,d\mu(y)\Bigr)\,d\lambda(x)\\
  {}&={\int\Bigl(\int\kappa_\alpha\varepsilon_y^A(x)\,d\lambda(x)\Bigr)\,d\mu(y)
  =\int\Bigl(\int\kappa_\alpha\varepsilon_y(x)\,d\lambda^A(x)\Bigr)\,d\mu(y)}\\
  {}&=\int\Bigl(\int\kappa_\alpha(x,y)\,d\mu(y)\Bigr)\,d\lambda^A(x)
  =\int\kappa_\alpha\mu\,d\lambda^A
  =\kappa_\alpha(\mu,\lambda^A),
\end{align*}
as claimed.\end{proof}

\begin{remark}\label{La} An assertion similar to Theorem \ref{th-int-rep} can be found in \cite[Chapter~V, Section~1]{L}, but the proof given there is incomplete, as noted above in the proof of Lemma~\ref{adequate}.
\end{remark}

\subsection{Properties of the swept measure.~II}\label{sec-II} Based on the results obtained above, we proceed with analyzing properties of the $\kappa_\alpha$-swept measure $\mu^A$. Recall that $D$ denotes the complement of $A$ to $\mathbb R^n$.

\begin{corollary}\label{C}For any\/ $\mu\in\mathfrak M^+(D)$, $\mu^A$ is\/ $c_\alpha$-absolutely continuous.\end{corollary}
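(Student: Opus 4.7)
The plan is to derive this corollary as an immediate consequence of the integral representation in Theorem~\ref{th-int-rep} together with the second part of Theorem~\ref{equiv}. By the very definition of $c_\alpha$-absolute continuity, it suffices to fix an arbitrary compact set $K\subset\mathbb R^n$ with $c_\alpha(K)=0$ and verify that $\mu^A(K)=0$.

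Since $\mu$ is carried by $D=A^c$, every $y$ that enters the integration against $\mu$ lies outside $A$; by the second assertion of Theorem~\ref{equiv}, the balay\'ee $\varepsilon_y^A$ is then itself $c_\alpha$-absolutely continuous, and hence $\varepsilon_y^A(K)=0$ for each such $y$. Inserting the characteristic function $1_K$ into the integral representation $\mu^A=\int\varepsilon_y^A\,d\mu(y)$ of Theorem~\ref{th-int-rep} would then give
\[\mu^A(K)=\int 1_K\,d\mu^A=\int_D\varepsilon_y^A(K)\,d\mu(y)=0,\]
which is the required conclusion. The measurability of $y\mapsto\varepsilon_y^A(K)$ that is implicit here is not an issue, since $y\mapsto\varepsilon_y^A$ is vaguely continuous on $D$ by Lemma~\ref{adequate}(b) and $K$ is closed, making the map in question upper semicontinuous.

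The main (and essentially only) technical point is the justification of the integral identity for the non-continuous integrand $f=1_K$: Theorem~\ref{th-int-rep} is stated for $f\in C_0(\mathbb R^n)$. However, its proof explicitly notes (via \cite[Section~3, Proposition~1]{Bou} and, in a footnote, \cite[Section~4, Th\'eor\`eme~1]{Bou}) that the identity extends to arbitrary positive l.s.c.\ functions and, more generally, to $\mu$-mea\-sur\-able integrands under the natural integrability hypothesis; since $1_K$ is bounded Borel with compact support and $\mu^A$ is a Radon measure, this is immediate. As a fallback I would approximate $K$ from above by a decreasing sequence of relatively compact open sets $U_j\downarrow K$, apply the l.s.c.\ version of the representation to each $1_{U_j}$, and pass to the limit by dominated convergence under the $\mu$-integral, using the uniform envelope $\varepsilon_y^A(U_1)\leqslant\varepsilon_y^A(\mathbb R^n)\leqslant1$ supplied by Theorem~\ref{cor-mass'}, together with the outer regularity of the Radon measure $\mu^A$ at $K$. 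Beyond this routine measure-theoretic check, no serious obstacle is expected.
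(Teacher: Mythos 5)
Your proposal is correct and follows essentially the same route as the paper: the paper likewise fixes a compact $K$ with $c_\alpha(K)=0$, invokes the latter assertion of Theorem~\ref{equiv} to get $\varepsilon_y^A(K)=0$ for $y\in D$, and then applies the integral representation (\ref{eq-int-rep}) to $1_K$ via Bourbaki's \cite[Section~3, Th\'eor\`eme~1]{Bou}. Your additional remarks on measurability and the approximation fallback are sound but amount to the same justification the paper delegates to that reference.
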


\begin{proof}Consider a compact set $K\subset\mathbb R^n$ with $c_\alpha(K)=0$; then for any $y\in D$, $\varepsilon_y^A(K)=0$ by the latter assertion of Theorem~\ref{equiv}. Applying \cite[Section~3, Th\'eor\`eme~1]{Bou}, we then conclude from (\ref{eq-int-rep}) that
\[\int 1_K\,d\mu^A=\int\,d\mu(y)\int 1_K(x)\,d\varepsilon_y^A(x)=0,\]
and so $\mu^A$ is indeed $c_\alpha$-absolutely continuous.\end{proof}

\begin{corollary}\label{C1}For any\/ $\mu\in\mathfrak M^+(D)$, $\mu^A$ is determined uniquely by relation\/ {\rm(\ref{eq-bala-f1})} among the\/ $c_\alpha$-absolutely continuous positive measures supported by\/~$A$.\end{corollary}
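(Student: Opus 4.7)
The plan is to verify that any such $\nu$ satisfies the defining relation~(\ref{alternative}) of Definition~\ref{def-swpt} for the balay\'ee of $\mu$ onto $A$, and then to invoke the uniqueness assertion in Theorem~\ref{th-bala-2} to conclude $\nu=\mu^A$. This route sidesteps any direct appeal to the $\kappa_\alpha$-domination principle or to convolution regularization.

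Given $\nu\in\mathfrak M^+(A)$ that is $c_\alpha$-absolutely continuous with $\kappa_\alpha\nu=\kappa_\alpha\mu$ n.e.\ on~$A$, and given an arbitrary $\lambda\in\mathcal E_\alpha^+$, I would rewrite $\kappa_\alpha(\nu,\lambda)$ in two moves. First, since Theorem~\ref{th-bala-f} yields $\kappa_\alpha\lambda^A=\kappa_\alpha\lambda$ n.e.\ on $A$ and $\nu$ is $c_\alpha$-absolutely continuous and carried by $A$, the exceptional subset of $A$ has $\nu$-measure zero, so $\kappa_\alpha\lambda$ and $\kappa_\alpha\lambda^A$ are interchangeable under integration against $\nu$. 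Fubini's theorem (all quantities being positive) then turns the resulting expression into an integral of $\kappa_\alpha\nu$ against $\lambda^A$. Second, $\lambda^A\in\mathcal E_\alpha^+(A)$ is itself $c_\alpha$-absolutely continuous (having finite energy) and carried by $A$, so the hypothesis $\kappa_\alpha\nu=\kappa_\alpha\mu$ n.e.\ on $A$ upgrades to $\kappa_\alpha\nu=\kappa_\alpha\mu$ $\lambda^A$-a.e.; substituting $\kappa_\alpha\mu$ for $\kappa_\alpha\nu$ in the integral produces precisely $\kappa_\alpha(\mu,\lambda^A)$. The upshot is $\kappa_\alpha(\nu,\lambda)=\kappa_\alpha(\mu,\lambda^A)$ for every $\lambda\in\mathcal E_\alpha^+$, which is exactly~(\ref{alternative}).

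With (\ref{alternative}) verified, Definition~\ref{def-swpt} declares $\nu$ to be a balay\'ee of $\mu$ onto $A$, and the uniqueness part of Theorem~\ref{th-bala-2} forces $\nu=\mu^A$. The whole argument turns on a double application of the elementary principle that a $c_\alpha$-absolutely continuous measure carried by $A$ ignores every $c_\alpha$-negligible subset of $A$; this is the only place where the $c_\alpha$-absolute continuity of $\nu$ enters, and it is clearly indispensable, since (as the authors note in passing in Section~\ref{sec-thin} and elsewhere) potentials generated by different $c_\alpha$-non-absolutely-continuous measures can coincide n.e.\ without the measures being equal.
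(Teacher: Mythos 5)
Your argument is correct, but it takes a genuinely different route from the paper. The paper's proof is essentially a citation: since $\mu^A$ is $c_\alpha$-absolutely continuous (Corollary~\ref{C}) and satisfies~(\ref{eq-bala-f1}) (Theorem~\ref{th-bala-2}), any competitor $\nu$ has $\kappa_\alpha\nu=\kappa_\alpha\mu^A$ n.e.\ on $A$, and the conclusion $\nu=\mu^A$ is drawn from the Remark on p.~178 of Landkof (a uniqueness statement for $c_\alpha$-absolutely continuous measures on a closed set whose potentials agree n.e.\ there). You instead verify the defining symmetry relation~(\ref{alternative}) directly: the chain $\kappa_\alpha(\nu,\lambda)=\kappa_\alpha(\nu,\lambda^A)=\kappa_\alpha(\mu,\lambda^A)$ is justified at each step because the relevant exceptional sets are Borel with $c_\alpha(\cdot)=0$, hence null for the $c_\alpha$-absolutely continuous measures $\nu$ and $\lambda^A$ carried by $A$ (inner regularity on the $\sigma$-compact space $\mathbb R^n$ upgrades inner measure zero to measure zero), and Fubini--Tonelli applies since everything is positive. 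Uniqueness then comes from Theorem~\ref{th-bala-2}, whose proof used convolution with $m^{(r)}$. What your approach buys is self-containment: it replaces the external appeal to Landkof by machinery already established in the paper, at the cost of routing through the convolution-based uniqueness argument rather than the potential-comparison one. Two small points worth making explicit if you write this up: (i) the statement also asserts that $\mu^A$ itself belongs to the competing class, which is exactly Corollary~\ref{C} together with Theorem~\ref{th-bala-2}; (ii) the competitor $\nu$ is tacitly subject to the standing assumption $\kappa_\alpha\nu\not\equiv+\infty$, which is what makes $\kappa_\alpha(\nu,\lambda)$ meaningful.
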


\begin{proof}The balay\'{e}e $\mu^A$ is $c_\alpha$-absolutely continuous, by Corollary~\ref{C}, and satisfies relation (\ref{eq-bala-f1})  according to Theorem~\ref{th-bala-2}. If $\nu\in\mathfrak M^+(A)$ possesses these two properties then $\kappa_\alpha\nu=\kappa_\alpha\mu^A$ n.e.\ on $A$, and an application of \cite[p.~178, Remark]{L} results in $\nu=\mu^A$.\end{proof}

\begin{corollary}\label{rest'}For every\/ $\mu\in\mathfrak M^+(D)$ and every closed subset\/ $F$ of\/ $A$ with\/ $c_\alpha(F)>0$,
\[\mu^F=(\mu^A)^F.\]
\end{corollary}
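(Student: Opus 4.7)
The plan is to show that $(\mu^A)^F$ satisfies the very relation that, by uniqueness in Theorem~\ref{th-bala-2}, characterizes $\mu^F$. That is, I aim to verify
\[\kappa_\alpha\bigl((\mu^A)^F,\lambda\bigr)=\kappa_\alpha(\mu,\lambda^F)\quad\text{for every \ }\lambda\in\mathcal E^+_\alpha,\]
after which, since $(\mu^A)^F\in\mathfrak M^+(F)$ by Theorem~\ref{th-bala-2} applied to $\mu^A$, the uniqueness clause of that theorem (applied with $F$ in the role of $A$ and $\mu$ the measure being swept) forces $(\mu^A)^F=\mu^F$.

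The required identity is obtained by the chain
\[\kappa_\alpha\bigl((\mu^A)^F,\lambda\bigr)\stackrel{(\ref{alternative})}{=}\kappa_\alpha\bigl(\mu^A,\lambda^F\bigr)\stackrel{(\ref{alternative2})}{=}\kappa_\alpha\bigl(\mu,(\lambda^F)^A\bigr)=\kappa_\alpha(\mu,\lambda^F)=\kappa_\alpha(\mu^F,\lambda),\]
where the middle step uses the extended symmetry relation from Theorem~\ref{th-alt2}, and the last step is the definition of $\mu^F$ (Definition~\ref{def-swpt}). The only nontrivial intermediate equality is $(\lambda^F)^A=\lambda^F$. This holds because by Theorem~\ref{th-bala-f} we have $\lambda^F\in\mathcal E^+_\alpha(F)\subset\mathcal E^+_\alpha(A)$ (as $F\subset A$), and the balay\'ee of a measure already lying in $\mathcal E^+_\alpha(A)$ onto $A$ coincides with the measure itself, since by Theorem~\ref{th-bala-f} the balay\'ee is characterized as the orthogonal projection in $\mathcal E_\alpha$ onto $\mathcal E^+_\alpha(A)$.

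I expect no serious obstacle here: the statement is a straightforward propagation of Corollary~\ref{rest-fin} from the finite-energy case to general $\mu\in\mathfrak M^+(D)$, and all the ingredients — the defining identity (\ref{alternative}), its extension (\ref{alternative2}) to arbitrary positive measures, and the uniqueness of the balay\'ee — are already in place. Note that the support hypothesis $\mu\in\mathfrak M^+(D)$ is not actually exploited in the argument; it is inherited only through the well-definedness of $\mu^F$ and $\mu^A$ from the previous results and is needed to stay within the framework of this section, while the algebra above works for any $\mu\in\mathfrak M^+$.
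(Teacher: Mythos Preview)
Your proof is correct and takes a genuinely different route from the paper's. The paper argues, in the spirit of Corollary~\ref{rest-fin}, via the potential identity: from Theorem~\ref{th-bala-2} one has $\kappa_\alpha(\mu^A)^F=\kappa_\alpha\mu^A=\kappa_\alpha\mu$ n.e.\ on $F$, and then invokes the uniqueness criterion of Corollary~\ref{C1} (with $F$ in place of $A$), which requires the candidate measures to be $c_\alpha$-absolutely continuous --- supplied by Corollary~\ref{C}. Your argument instead verifies the defining relation~(\ref{alternative}) directly, using the extended symmetry~(\ref{alternative2}) together with the elementary observation $(\lambda^F)^A=\lambda^F$ for $\lambda\in\mathcal E^+_\alpha$. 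This bypasses the integral representation and the $c_\alpha$-absolute continuity machinery entirely and, as you correctly note, yields the conclusion for arbitrary $\mu\in\mathfrak M^+$, not just $\mu\in\mathfrak M^+(D)$ --- a strict strengthening. The paper's route has the virtue of making the support hypothesis do visible work (through Corollary~\ref{C}), but your approach shows that hypothesis is in fact unnecessary for the statement as formulated.
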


\begin{proof}Indeed, the assertion follows from Corollaries~\ref{C} and~\ref{C1} in a way similar to that in the proof of Corollary~\ref{rest-fin}.\end{proof}

The following assertion specifies the latter part of Theorem \ref{cor-mass'}.

\begin{theorem}\label{bal-mass-th} For\/ $A$ to be\/ $\alpha$-thin at\/ $\omega_{\mathbb R^n}$ it is necessary and sufficient that there exists a non-ze\-ro bounded measure\/ $\mu\in\mathfrak M^+(D)$ such that
\begin{equation}\label{t-mass'}\mu^A(\mathbb R^n)<\mu(\mathbb R^n).\end{equation}
If moreover $D$ is connected then the inequality\/ {\rm(\ref{t-mass'})} holds for every non-ze\-ro bounded\/ $\mu\in\mathfrak M^+(D)$ \rm{(}provided that\/ $A$ is\/ $\alpha$-thin at\/ $\omega_{\mathbb R^n}${\rm)}.\footnote{For $\alpha<2$ the latter assertion of the theorem remains valid even if the requirement of connectedness of $D$ is omitted.}
\end{theorem}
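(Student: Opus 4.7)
\emph{Step 1: reduction to a pointwise inequality.} The identity
\[
\mu^A(\mathbb R^n) \;=\; \int \varepsilon_y^A(\mathbb R^n)\,d\mu(y)
\]
follows from the integral representation of Theorem~\ref{th-int-rep} extended to positive lower semicontinuous integrands (specifically to the constant~$1$), as in the proof of that theorem. Consequently $\mu(\mathbb R^n)-\mu^A(\mathbb R^n)=\int\bigl(1-\varepsilon_y^A(\mathbb R^n)\bigr)\,d\mu(y)\geq 0$ by (\ref{t-mass}). Moreover, for $y\in D$ the proof of Theorem~\ref{equiv} identifies $\varepsilon_y^A$ as the Kelvin transform with respect to $S(y,1)$ of the $\alpha$-Riesz equilibrium measure $\gamma_{A^*_y}$ of the compact set $A^*_y$ (the inverse of $A\cup\{\omega_{\mathbb R^n}\}$); because $\varepsilon_y^A$ is $c_\alpha$-absolutely continuous, (\ref{kelv-mmm}) applied with $\nu=\varepsilon_y^A$ and $\nu^*=\gamma_{A^*_y}$ yields the crucial identity
\[
\varepsilon_y^A(\mathbb R^n) \;=\; \kappa_\alpha\gamma_{A^*_y}(y) \qquad (y\in D).
\]

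\emph{Step 2: sufficiency.} If $\mu^A(\mathbb R^n)<\mu(\mathbb R^n)$ for some bounded non-zero $\mu\in\mathfrak M^+(D)$, Step~1 produces a point $y\in D$ with $\kappa_\alpha\gamma_{A^*_y}(y)<1$. Being the $T_y$-image of $\omega_{\mathbb R^n}$, the point $y$ lies in $A^*_y$; since $A^*_y$ is compact and therefore trivially $\alpha$-thin at $\omega_{\mathbb R^n}$, Corollary~\ref{reg-com} forces $\kappa_\alpha\gamma_{A^*_y}=1$ at every $\alpha$-regular point of $A^*_y$. Hence $y$ must be $\alpha$-irregular for $A^*_y$, which by the definition in Section~\ref{sec-thin} is precisely the assertion that $A$ is $\alpha$-thin at $\omega_{\mathbb R^n}$.

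\emph{Step 3: necessity and the ``for every'' refinement.} Under the thinness hypothesis, Section~\ref{sec-thin} provides the extended equilibrium measure $\gamma_A\in\mathfrak M^+(A)$ with $\kappa_\alpha\gamma_A\leq 1$. Working with the exhausting measures $\gamma_n:=\gamma_{\overline B(0,n)}$ of Theorem~\ref{cor-mass'} (for which $\kappa_\alpha\gamma_n\uparrow 1$), the symmetry~(\ref{alternative2}) and dominated convergence (the integrands being dominated by $1$ and $\mu$ bounded) produce the dual identity
\[
\mu(\mathbb R^n)-\mu^A(\mathbb R^n) \;=\; \int_D \bigl(1-\kappa_\alpha\gamma_A\bigr)\,d\mu
\]
valid for every bounded $\mu\in\mathfrak M^+$, with the additional input that $\kappa_\alpha\gamma_n^A\uparrow\kappa_\alpha\gamma_A$ pointwise. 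The Kelvin representation $\gamma_A=(\varepsilon_{y_0}^{A^*})^*$ established in the proof of Corollary~\ref{reg-com}, combined with (\ref{KP}), gives the decay $\kappa_\alpha\gamma_A(z)\to 0$ as $|z|\to\infty$. Since $\alpha$-thinness of $A$ at $\omega_{\mathbb R^n}$ precludes $A$ from containing any neighborhood of $\omega_{\mathbb R^n}$ in $\overline{\mathbb R^n}$ (otherwise the $T_y$-image of such a neighborhood would place $y$ in the interior of $A^*_y$ and make $y$ $\alpha$-regular), the set $D$ is unbounded; choosing $y\in D$ with $|y|$ large enough to force $\kappa_\alpha\gamma_A(y)<1$ and taking $\mu:=\varepsilon_y$ supplies the required measure for necessity. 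The dual identity also reduces the ``for every'' assertion to the pointwise inequality $\kappa_\alpha\gamma_A<1$ on all of $D$. When $D$ is connected this follows from the strong maximum principle applied to the $\alpha$-harmonic function $\kappa_\alpha\gamma_A$ on the single component $D$, bounded above by $1$ and vanishing at infinity. The main technical hurdle is the footnote, i.e.\ dispensing with the connectedness assumption when $\alpha<2$: here one must invoke the \emph{nonlocal} strong maximum principle, which on any bounded component $C$ of $D$ represents $\kappa_\alpha\gamma_A(y)$ for $y\in C$ as a strictly positive Poisson-type integral over the \emph{entire} complement $\mathbb R^n\setminus C$, not merely over $\partial C$; since $\kappa_\alpha\gamma_A<1$ on a set of positive Lebesgue measure near $\omega_{\mathbb R^n}$, this representation rules out the Faraday-cage equality $\kappa_\alpha\gamma_A\equiv 1$ on $C$ that is available for $\alpha=2$ (as witnessed by the classical annulus).
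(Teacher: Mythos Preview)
Your overall architecture matches the paper's: the Kelvin identification $\varepsilon_y^A(\mathbb R^n)=\kappa_\alpha\gamma_{A^*_y}(y)$ for sufficiency, and the dual identity $\mu(\mathbb R^n)-\mu^A(\mathbb R^n)=\int(1-\kappa_\alpha\gamma_A)\,d\mu$ together with the strong maximum principle for necessity and the ``for every'' refinement. Step~2 is correct and is the contrapositive of the paper's sufficiency argument.

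There is, however, a genuine gap in Step~3. The claim that $\kappa_\alpha\gamma_A(z)\to 0$ as $|z|\to\infty$ is \emph{false} in general. If $A$ is unbounded (for instance a cusp extending to infinity that is nevertheless $\alpha$-thin there), then by Corollary~\ref{reg-com} one has $\kappa_\alpha\gamma_A=1$ at every $\alpha$-regular point of $A$, and such points exist arbitrarily far from the origin; hence $\kappa_\alpha\gamma_A$ cannot tend to~$0$. Your Kelvin computation only gives $\kappa_\alpha\gamma_A(z)=|z^*-y_0|^{n-\alpha}\,\kappa_\alpha\varepsilon_{y_0}^{A^*}(z^*)$, and while the first factor tends to $0$ as $z^*\to y_0$, the second is merely bounded above by $|z^*-y_0|^{\alpha-n}$ and need not be $o$ of that quantity (indeed, equality holds along $A^*\setminus A^*_I$). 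So this route to exhibiting a point $y\in D$ with $\kappa_\alpha\gamma_A(y)<1$ fails, and with it your use of ``vanishing at infinity'' in the maximum-principle step.

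The paper closes this gap with a one-line observation that avoids any decay statement: if $\kappa_\alpha\gamma_A\equiv 1$ on all of $D$, then (combined with $\kappa_\alpha\gamma_A=1$ n.e.\ on $A$) one would have $\kappa_\alpha\gamma_A=1$ n.e.\ on $\mathbb R^n$, so $\gamma_A$ would serve as an equilibrium measure for $\mathbb R^n$ itself and $\mathbb R^n$ would be $\alpha$-thin at $\omega_{\mathbb R^n}$ --- absurd. This yields a component $D_i$ of $D$ on which $\kappa_\alpha\gamma_A\not\equiv 1$, and then the strong maximum principle (exactly as you describe) upgrades this to $\kappa_\alpha\gamma_A<1$ everywhere on $D_i$. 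Any nonzero bounded $\mu\in\mathfrak M^+(D_i)$ then witnesses~(\ref{t-mass'}).

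A minor point: your derivation of the dual identity via the exhausting $\gamma_n$ and the limit $\kappa_\alpha\gamma_n^A\uparrow\kappa_\alpha\gamma_A$ is correct (the limit can be justified from Lemma~\ref{l-char} and the characterization~(\ref{sec2-4})) but unnecessarily circuitous. The paper obtains $\mu^A(\mathbb R^n)=\int\kappa_\alpha\gamma_A\,d\mu$ in two lines: since $\mu^A$ is $c_\alpha$-absolutely continuous on $A$ where $\kappa_\alpha\gamma_A=1$ n.e., $\mu^A(\mathbb R^n)=\kappa_\alpha(\mu^A,\gamma_A)$; and since $\gamma_A$ is $c_\alpha$-absolutely continuous on $A$ where $\kappa_\alpha\mu^A=\kappa_\alpha\mu$ n.e., this equals $\kappa_\alpha(\mu,\gamma_A)$.
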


\begin{proof} To prove the sufficiency part of the former assertion of the theorem, assume that on the contrary $A$ is not $\alpha$-thin at $\omega_{\mathbb R^n}$. Fix $y\in D$, and let $A^*$ be the inverse of
$A\cup\{\omega_{\mathbb R^n}\}$ with respect to $S(y,1)$. Then $A^*$ is a compact set, and $y\in A^*$ is an $\alpha$-regular point of $A^*$. According to Corollary~\ref{reg-com}, we have $\kappa_\alpha\gamma_{A^*}(y)=1$, $\gamma_{A^*}$ being the equilibrium measure on $A^*$. For the Kelvin transform $\nu$ of $\gamma_{A^*}$, we thus conclude from relations (\ref{KP})--(\ref{kelv-mmm}) that $\nu\in\mathcal E^+_\alpha(A)$ because $\gamma_{A^*}\in\mathcal E^+_\alpha(A^*)$, and that
\begin{equation}\label{1}\nu(\mathbb R^n)=
\kappa_\alpha\gamma_{A^*}(y)=1,\end{equation}
and also that
\begin{equation}\label{11}\kappa_\alpha\nu(x)=|x-y|^{\alpha-n}\kappa_\alpha\gamma_{A^*}(x^*)=\kappa_\alpha\varepsilon_y(x)\quad\text{for nearly all \ }x\in A,\end{equation}
the last display being valid in view of the fact that the assertions $c_\alpha(E^*)=0$ and $c_\alpha(E)=0$, $E\subset A$, are equivalent, cf.\ \cite[Chapter~IV, Section~5, n$^\circ$\,19]{L}.
Since $\varepsilon_y^A$ is $c_\alpha$-absolutely continuous according to the latter assertion of Theorem~\ref{equiv}, relation (\ref{11}) yields $\nu=\varepsilon_y^A$, cf.\  Corollary~\ref{C1}. Hence, by (\ref{1}), $\varepsilon_y^A(\mathbb R^n)=1$. Combined with (\ref{eq-int-rep}) this gives for every $\mu\in\mathfrak M^+(D)$
\[\mu^A(\mathbb R^n)=\int d\mu^A=\int\,d\mu(y)\int d\varepsilon_y^A(x)=\mu(\mathbb R^n),\]
cf.\ \cite[Section~3, Th\'eor\`eme~1]{Bou}, and the sufficiency part of the theorem follows.

If now $A$ is $\alpha$-thin at $\omega_{\mathbb R^n}$, then there exists the unique (in general unbounded) $c_\alpha$-ab\-sol\-utely continuous $\alpha$-Riesz equilibrium measure $\gamma_A\in\mathfrak M^+(A)$. One can choose a connected component $D_i$ of $D$ so that
$\kappa_\alpha\gamma_A\not\equiv1$ on $D_i$, for if not then $\kappa_\alpha\gamma_A$ equals $1$ everywhere on $D$, hence n.e.\ on $\mathbb R^n$, cf.\ (\ref{sec2-2}).
Thus $\gamma_A$ serves also as the $\alpha$-Riesz equilibrium measure on $\mathbb R^n$, so that $\mathbb R^n$ itself is $\alpha$-thin at $\omega_{\mathbb R^n}$. Contradiction.

We proceed by showing that, for the given $D_i$,
\begin{equation}\label{strless}
\kappa_\alpha\gamma_A<1\quad\text{everywhere on \ }D_i.
\end{equation}
 On the contrary, let this not hold; then by inequality (\ref{sec2-3}) $\kappa_\alpha\gamma_A(x_0)=1$ at some $x_0\in D_i$. Fix an open neighborhood $U$ of $x_0$ so that $C\ell_{\mathbb R^n}U\subset D_i$. Then both $\kappa_\alpha\gamma_A$ and $1$ are $\alpha$-super\-har\-monic on $\mathbb R^n$, $\alpha$-harmonic on $U$, and continuous on $C\ell_{\mathbb R^n}U$, cf.\  \cite[Theorem~1.4]{L} for $\alpha=2$ and \cite[Chapter~I, Section~6, n$^\circ$\,20]{L} for $\alpha<2$.
Since, in consequence of relation (\ref{sec2-3}), $\kappa_\alpha\gamma_A$ takes its maximum value at $x_0$, we infer from \cite[Theorems~1.1, 1.28]{L} that $\kappa_\alpha\gamma_A=1$ everywhere on $U$, hence everywhere on $D_i$, which contradicts the choice of $D_i$.

The theorem will be established once we have shown that inequality (\ref{t-mass'}) holds for every non-ze\-ro bounded $\mu\in\mathfrak M^+(D_i)$.
Since both $\gamma_A$ and $\mu^A$ are $c_\alpha$-ab\-sol\-utely continuous, cf.\ Corollary~\ref{C}, we thus have, by relations (\ref{sec2-2}), (\ref{eq-bala-f1}), cf.\ Theorem~\ref{th-bala-2}, and (\ref{strless}),
\[\mu^A(\mathbb R^n)=\kappa_\alpha(\mu^A,\gamma_A)=\kappa_\alpha(\mu,\gamma_A)<\mu(D_i)\leqslant\mu(\mathbb R^n),\]
as was to be proved.\end{proof}

\begin{remark}Theorem \ref{bal-mass-th} has been announced in earlier papers of the second named author (see \cite[Theorem~4]{Z2}; for $\alpha=2$, see also \cite[Theorem~B]{Z0}). Since in both these papers the integral representation from \cite{L} was essentially used, we consider it pertinent to provide here an independent proof, cf.\ Remark~\ref{La}.\end{remark}

\section{$\alpha$-Green kernel}\label{sec4}

In all that follows, consider a fixed domain $D\subset\mathbb R^n$ with the complement $A:=D^c:=\mathbb R^n\setminus D$, and the (generalized) $\alpha${\it -Green kernel\/} $g=g_D^\alpha$ on $D$ defined by
\[g^\alpha_D(x,y)=\kappa_\alpha\varepsilon_y(x)-\kappa_\alpha\varepsilon_y^A(x)\quad
\text{for all \ }x,y\in D.\]
The second term on the right is called the {\it compensating term\/} for $g$.

The properties of the $\alpha$-Green kernel $g=g_D^\alpha$, to be given below, generalize those of the $\alpha$-Riesz kernel, corresponding to the case $c_\alpha(A)=0$.

\subsection{Basic properties of the $\alpha$-Green kernel}

It is seen from (\ref{symD}) that the compensating term is {\it symmetric\/}, and so is therefore $g$, that is,\ $g(x,y)=g(y,x)$ for all $x,y\in D$.
Furthermore, $\kappa_\alpha\varepsilon_y^A(x)$ is (finite and) continuous as a function of $(x,y)\in D\times D$ (see \cite{Z}). It follows that $g$ is l.s.c.\ on $D\times D$, continuous off the diagonal, and takes the value~$+\infty$ on the diagonal. Thus, the $\alpha$-Green kernel $g=g_D^\alpha$ is a (positive function) kernel on the locally compact space $X=D$ (see Section~\ref{sec-ker}).

For any $Q\subset D$, the assertions $c_\alpha(Q)=0$ and $c_g(Q)=0$, $c_g(\cdot)$ being the inner capacity relative to the kernel~$g$, are equivalent, cf.\ \cite[Lemma~2.6]{Z}. Therefore, if some statement $\mathcal U(x)$ is valid n.e.\ on $B\subset D$, then $c_g(N)=0$, $N$ consisting of all $x\in B$ with $\mathcal U(x)$ not to hold; and also the other way around.

\begin{lemma}\label{lem-posit'}$g^\alpha_D(x,y)>0$ for every\/ $(x,y)\in D\times D$.\end{lemma}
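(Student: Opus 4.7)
The plan is to split into the diagonal and off-diagonal cases. For $x=y\in D$, simply observe that $g_D^\alpha(y,y)=\kappa_\alpha\varepsilon_y(y)-\kappa_\alpha\varepsilon_y^A(y)=+\infty-\kappa_\alpha\varepsilon_y^A(y)=+\infty$, since (as noted in the paragraphs just above the lemma) $\kappa_\alpha\varepsilon_y^A(x)$ is finite and continuous as a function of $(x,y)\in D\times D$. So $g_D^\alpha(y,y)=+\infty>0$, and it remains only to handle the off-diagonal case.

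Fix distinct $x_0,y_0\in D$. Non-negativity $g_D^\alpha(x_0,y_0)\geqslant0$ is immediate from (\ref{eq-bala-f2}) in Theorem~\ref{th-bala-2} applied to $\mu=\varepsilon_{y_0}$. Assume for contradiction that $g_D^\alpha(x_0,y_0)=0$, and consider
\[h(x):=g_D^\alpha(x,y_0)=\kappa_\alpha\varepsilon_{y_0}(x)-\kappa_\alpha\varepsilon_{y_0}^A(x),\quad x\in D.\]
Then $h\geqslant0$ on $D$, $h(x_0)=0$, and $h$ is finite and continuous on $D\setminus\{y_0\}$, with $h(x)\to+\infty$ as $x\to y_0$ (because $\kappa_\alpha\varepsilon_{y_0}(x)=|x-y_0|^{\alpha-n}\to+\infty$ while the compensating term is continuous and finite at $y_0$). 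Moreover, both terms defining $h$ are $\alpha$-harmonic on $D\setminus\{y_0\}$: the first as the $\alpha$-Riesz potential of a Dirac mass concentrated at $y_0$, the second because $\varepsilon_{y_0}^A$ is carried by $A=D^c$ (cf.\ \cite[Chapter~I]{L}). Hence $h$ itself is $\alpha$-harmonic on $D\setminus\{y_0\}$.

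Since $D$ is a domain in $\mathbb R^n$ with $n\geqslant3$, the punctured set $D\setminus\{y_0\}$ is again a connected open set. Apply the strong minimum principle for $\alpha$-harmonic functions (cf.\ \cite[Theorems 1.1 and 1.28]{L}, already invoked in the proof of Theorem~\ref{bal-mass-th}): the non-negative $\alpha$-harmonic function $h$ on the connected open set $D\setminus\{y_0\}$, attaining $0$ at the interior point $x_0$, must vanish identically on $D\setminus\{y_0\}$. This contradicts $h(x)\to+\infty$ as $x\to y_0$, completing the argument.

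The main subtlety is the correct invocation of the strong minimum principle for $\alpha$-harmonic functions when $\alpha<2$, where $\alpha$-harmonicity is a nonlocal notion. Here however $h$ is globally the difference of two $\alpha$-Riesz potentials of positive measures on $\mathbb R^n$ and is continuous off $\{y_0\}$, so the minimum principle of \cite{L} applies in the form needed. An alternative self-contained route is to note that on any ball $\overline B\subset D\setminus\{y_0\}$, $h$ is the $\alpha$-Riesz potential of the signed measure $\varepsilon_{y_0}-\varepsilon_{y_0}^A$ whose support misses $\overline B$; hence $h$ is real analytic on $B$ and satisfies the mean-value property with respect to the $\alpha$-harmonic measure of $B$, from which vanishing at an interior point propagates throughout the connected set $D\setminus\{y_0\}$, producing the same contradiction.
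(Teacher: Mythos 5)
Your proof is correct, and its core is the same as the paper's: both apply a minimum principle to the non-negative function $\kappa_\alpha\varepsilon_{y_0}-\kappa_\alpha\varepsilon_{y_0}^A$, which by (\ref{eq-bala-f2}) is $\geqslant0$ on all of $\mathbb R^n$ and would vanish at the interior point $x_0$. The difference is in the endgame. The paper views this function as $\alpha$-superharmonic near the point where the value $0$ is attained, concludes from the minimum principle that $\kappa_\alpha\varepsilon_{y}^A=\kappa_\alpha\varepsilon_{y}$ a.e.\ on $\mathbb R^n$, and then gets the contradiction from the uniqueness theorem for potentials \cite[Theorem~1.12]{L}: $\varepsilon_y^A=\varepsilon_y$ is absurd because $\varepsilon_y^A$ is carried by $A$ while $y\in D$. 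You instead work on the punctured domain $D\setminus\{y_0\}$, where $h=g(\cdot,y_0)$ is $\alpha$-harmonic, and derive the contradiction from the blow-up of $h$ at $y_0$, thereby bypassing the uniqueness theorem. One caveat: citing \cite[Theorems~1.1, 1.28]{L} as a ``strong minimum principle for $\alpha$-harmonic functions'' is slightly loose, since Theorem~1.28 there concerns $\alpha$-superharmonic functions defined on all of $\mathbb R^n$, whereas your $h$ is only $\alpha$-harmonic on an open subset; but your alternative route closes this gap correctly --- $h(x_0)=0$ together with the mean-value identity $h(x_0)=\int h\,d\varepsilon_{x_0}^{(B)}$ and the strict positivity of the density of the $\alpha$-harmonic measure of a ball on the complement of its closure force $h=0$ Lebesgue-a.e.\ off $\overline B$, which is incompatible with $h>1$ on a punctured neighbourhood of $y_0$ (and for $\alpha=2$ the classical strong maximum principle on the connected set $D\setminus\{y_0\}$ does the job). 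Neither route is materially simpler or more general than the other; yours has the minor advantage of not needing the a.e.-equality-of-potentials-implies-equality-of-measures step.
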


\begin{proof} Suppose that, on the contrary, $g(x,y)=0$ at some $(x,y)\in D\times D$. Then
\[\kappa_\alpha\varepsilon_{y}(x)=\kappa_\alpha\varepsilon_{y}^A(x).\]
Consider an open neighborhood $U\subset D$ of $y$ such that $C\ell_{\mathbb R^n}U\subset D$.
Then $\kappa_\alpha\varepsilon_{y}^A(\cdot)$ is continuous on $D$ (and, hence, on $C\ell_{\mathbb R^n}U$) and $\alpha$-harmonic on $D$, while $\kappa_\alpha\varepsilon_{y}$ is $\alpha$-superharmonic on $\mathbb R^n$, cf.\  \cite[Theorem~1.4]{L} for $\alpha=2$ and \cite[Chapter~I, Section~6, n$^\circ$\,20]{L} for $\alpha<2$. According to relations (\ref{eq-bala-f1}) and (\ref{eq-bala-f2}), cf.\ Theorem~\ref{th-bala-2},
\[\kappa_\alpha\varepsilon_{y}^A\leqslant\kappa_\alpha\varepsilon_{y}\quad\text{everywhere on \ }\mathbb R^n,\]
the equality being valid n.e.\ on~$A$.
In view of the last two displays we therefore conclude from \cite[Theorems~1.1, 1.28]{L} that $\kappa_\alpha\varepsilon_y^A=\kappa_\alpha\varepsilon_{y}$ a.e.\ on $\mathbb R^n$.
By \cite[Theorem 1.12]{L}, this yields $\varepsilon_y^A=\varepsilon_y$, which is impossible.\end{proof}

\begin{remark}Lemma \ref{lem-posit'} can actually be strengthened by \cite[Theorem~3.4]{Ku}, noting that for an open ball $B$ such that $C\ell_{\mathbb R^n}B\subset D$, $g^\alpha_D(x,y)\geqslant g^\alpha_B(x,y)$, $x,y\in B$, the latter being clear from Corollary~\ref{rest'}.	
\end{remark}

\begin{definition}\label{d-ext}A measure $\nu\in\mathfrak M(D)$ is called {\it extendible\/} if its extension by~$0$ to $\mathbb R^n$, denoted again by $\nu$, is a (Radon) measure on $\mathbb R^n$ such that (\ref{1.3.10}) holds for both $\nu^+$ and $\nu^-$.\end{definition}

We identify an extendible measure $\nu\in\mathfrak M(D)$ with its extension by~$0$ to $\mathbb R^n$. A measure $\nu\in\mathfrak M(D)$ is extendible if and only if $|\nu|$ is extendible (or equivalently $\nu^+$ and $\nu^-$ are so).
Every bounded measure is of course extendible. The converse holds if $D$ is bounded, but not in general (e.g., not if $A$ is compact).

\begin{lemma}\label{l-hatg}For any extendible measure\/ $\nu\in\mathfrak M(D)$, $g\nu$ is well-defined and finite n.e.\ on $D$ and given by
\begin{equation}\label{hatg}g\nu=\kappa_\alpha\nu-\kappa_\alpha\nu^A.\end{equation}
\end{lemma}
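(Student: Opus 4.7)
The plan is to extend $\nu$ by zero to a signed Radon measure on $\mathbb R^n$, apply the balayage theory of Section~\ref{sec2} to its positive and negative parts separately, and then derive (\ref{hatg}) by combining the definition $g(x,y)=\kappa_\alpha\varepsilon_y(x)-\kappa_\alpha\varepsilon_y^A(x)$ with the integral representation of the swept potential given by Corollary~\ref{cor-sym}.

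First I would set up the book-keeping. By Definition~\ref{d-ext}, extendibility of $\nu$ means that $\nu^+$ and $\nu^-$, extended by zero to $\mathbb R^n$, each satisfy the growth condition (\ref{1.3.10}), so the theory of Section~\ref{sec2} applies to both. In particular $\kappa_\alpha\nu^\pm$ is finite n.e.\ on $\mathbb R^n$, the balay\'ees $(\nu^\pm)^A$ exist by Theorem~\ref{th-bala-2}, and by (\ref{eq-bala-f2}) the potentials $\kappa_\alpha(\nu^\pm)^A\leqslant\kappa_\alpha\nu^\pm$ are likewise finite n.e.\ on $\mathbb R^n$. Therefore both $\kappa_\alpha\nu=\kappa_\alpha\nu^+-\kappa_\alpha\nu^-$ and $\kappa_\alpha\nu^A:=\kappa_\alpha(\nu^+)^A-\kappa_\alpha(\nu^-)^A$ are well defined and finite n.e.\ on $\mathbb R^n$, so the right-hand side of (\ref{hatg}) makes unambiguous sense n.e.\ on~$D$.

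Next I would fix any $x\in D$ at which the four quantities $\kappa_\alpha\nu^\pm(x)$ and $\kappa_\alpha(\nu^\pm)^A(x)$ are all finite; as noted above, this excludes only an n.e.\ subset of $D$, equivalently only a $c_g$-polar subset by \cite[Lemma~2.6]{Z}. Since $g\geqslant 0$ on $D\times D$ (by (\ref{eq-bala-f2}) applied to $\mu=\varepsilon_y$) and $g(x,y)\leqslant\kappa_\alpha(x,y)$, the potential $g\nu^\pm(x)$ is a well-defined finite nonnegative number, and the pointwise identity $g(x,y)=\kappa_\alpha(x,y)-\kappa_\alpha\varepsilon_y^A(x)$ may be integrated term by term against $\nu^\pm$ to yield
\[g\nu^\pm(x)=\kappa_\alpha\nu^\pm(x)-\int\kappa_\alpha\varepsilon_y^A(x)\,d\nu^\pm(y).\]
Invoking the symmetry $\kappa_\alpha\varepsilon_y^A(x)=\kappa_\alpha\varepsilon_x^A(y)$ from (\ref{symD}) and then the integral representation (\ref{symD2}) applied to $\mu:=\nu^\pm$ (with the dummy variables $x,y$ exchanged), the last integral equals $\kappa_\alpha(\nu^\pm)^A(x)$. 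Subtracting the $+$ and $-$ identities yields (\ref{hatg}) at the chosen $x$, and in particular shows that $g\nu(x)=g\nu^+(x)-g\nu^-(x)$ is unambiguously defined and finite there.

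The only delicate point in the plan is the termwise splitting of $\int g(x,y)\,d\nu^\pm(y)$: a priori both $\kappa_\alpha\varepsilon_y(x)$ and $\kappa_\alpha\varepsilon_y^A(x)$ could equal $+\infty$ for some $y$ (most obviously when $y=x$). This is resolved precisely by restricting at the outset to those $x\in D$ where $\kappa_\alpha\nu^\pm(x)<+\infty$; then the dominating integrand $\kappa_\alpha(x,\cdot)$ is $\nu^\pm$-integrable (and in particular $\nu^\pm(\{x\})=0$), legitimizing the split by elementary measure theory. Together with the continuity of $(x,y)\mapsto\kappa_\alpha\varepsilon_y^A(x)$ on $D\times D$ recorded in the paper, this makes every subsequent manipulation routine.
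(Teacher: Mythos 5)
Your proof is correct and follows essentially the same route as the paper's: both reduce (\ref{hatg}) to the identity $\kappa_\alpha(\nu^\pm)^A(x)=\int\kappa_\alpha\varepsilon_y^A(x)\,d\nu^\pm(y)$ — you via Corollary~\ref{cor-sym}, the paper via (\ref{repr-th1}) — and then integrate the definition of $g$ termwise against $\nu^\pm$. Your explicit justification of the termwise splitting (domination of $0\leqslant\kappa_\alpha\varepsilon_y^A(x)\leqslant\kappa_\alpha(x,y)$ at points where $\kappa_\alpha\nu^\pm(x)<+\infty$) is a welcome detail that the paper leaves implicit.
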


\begin{proof}It is seen from Definition~\ref{d-ext} that $\kappa_\alpha\nu$ is finite n.e.\ on $\mathbb R^n$, cf.\ the beginning of Section~\ref{sec2}, and hence so is $\kappa_\alpha\nu^A$. By identity (\ref{repr-th1}), applied to $\nu^\pm$, we get
 \[g\nu(x)=\int\,\bigl[\kappa_\alpha\varepsilon_y(x)-\kappa_\alpha\varepsilon_y^A(x)\bigr]\,d\nu(y)=\kappa_\alpha\nu(x)-\kappa_\alpha\nu^A(x)\]
for nearly every $x\in D$, and the lemma follows.\end{proof}

\begin{lemma}\label{l-hen}If\/ $\nu\in\mathcal E_g(D)$ is extendible, then
\begin{equation}\label{eq1-l-hen}\|\nu\|^2_g=\kappa_\alpha(\nu-\nu^A,\nu-\nu^A).\end{equation}
If, moreover, $\nu$ has compact support in\/ $D$, then
$\nu\in\mathcal E_\alpha(\mathbb R^n)$ and
\begin{equation}\label{eq1-2-hen}\|\nu\|^2_g=\|\nu-\nu^A\|^2_\alpha=\|\nu\|^2_\alpha-\|\nu^A\|^2_\alpha.\end{equation}
\end{lemma}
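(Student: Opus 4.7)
The plan is to reduce the identity in (\ref{eq1-l-hen}) to the pointwise identity $g\nu=\kappa_\alpha\nu-\kappa_\alpha\nu^A$ n.e.\ on $D$ (Lemma~\ref{l-hatg}) together with the dual identity $\kappa_\alpha(\nu-\nu^A)=0$ n.e.\ on $A$, so that the $\alpha$-Riesz mutual energy of $\nu-\nu^A$ splits cleanly along the partition $\mathbb R^n=D\cup A$.

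First I would record the two pointwise identities. By extendibility, $\kappa_\alpha\nu^\pm$ is finite n.e., and hence so is $\kappa_\alpha(\nu^\pm)^A\leqslant\kappa_\alpha\nu^\pm$; consequently $\kappa_\alpha(\nu-\nu^A):=\kappa_\alpha\nu-\kappa_\alpha\nu^A$ is a well-defined Borel function n.e. Lemma~\ref{l-hatg} then gives
\[\kappa_\alpha(\nu-\nu^A)=g\nu\quad\text{n.e.\ on \ }D.\]
On the other hand, applying (\ref{eq-bala-f1}) to each of $\nu^\pm\in\mathfrak M^+$ yields $\kappa_\alpha(\nu^\pm)^A=\kappa_\alpha\nu^\pm$ n.e.\ on $A$, and subtracting gives $\kappa_\alpha(\nu-\nu^A)=0$ n.e.\ on $A$.

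Next I would evaluate $\kappa_\alpha(\nu-\nu^A,\nu-\nu^A)$ as the iterated integral $\int\kappa_\alpha(\nu-\nu^A)\,d(\nu-\nu^A)$ and split it according to the disjoint supports of $\nu$ (in $D$) and $\nu^A$ (on $A$). Since $\nu\in\mathcal E_g(D)$, the measures $\nu^\pm\in\mathcal E_g^+(D)$ are $c_g$-absolutely continuous, which (as noted after Lemma~\ref{lem-posit'}) is the same as $c_\alpha$-absolutely continuous; therefore ``n.e.\ on $D$'' passes to $\nu^\pm$-a.e., giving
\[\int\kappa_\alpha(\nu-\nu^A)\,d\nu=\int g\nu\,d\nu=\|\nu\|_g^2.\]
Similarly $|\nu^A|\leqslant(\nu^+)^A+(\nu^-)^A$ is $c_\alpha$-absolutely continuous by Corollary~\ref{C}, so ``n.e.\ on $A$'' passes to $\nu^A$-a.e., giving $\int\kappa_\alpha(\nu-\nu^A)\,d\nu^A=0$. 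Combining yields (\ref{eq1-l-hen}).

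For the compact-support case, the continuity of $\kappa_\alpha\varepsilon_y^A(x)$ on $D\times D$ (noted in Section~\ref{sec4}) implies that this function is bounded on $K\times K$, where $K\subset D$ is the compact support of $\nu$. Since
\[\kappa_\alpha(x,y)=g(x,y)+\kappa_\alpha\varepsilon_y^A(x)\quad\text{on \ }D\times D,\]
integration against $d|\nu|(x)\,d|\nu|(y)$ shows $|\nu|\in\mathcal E_\alpha^+$ (as $g(|\nu|,|\nu|)<+\infty$ by the finite $g$-energy of $\nu^\pm$ and Cauchy--Schwarz), hence $\nu\in\mathcal E_\alpha$; a fortiori $\nu^A\in\mathcal E_\alpha$ by Corollary~\ref{ineq-en} applied to $\nu^\pm$. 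All three terms in the expansion
\[\|\nu-\nu^A\|_\alpha^2=\|\nu\|_\alpha^2-2\kappa_\alpha(\nu,\nu^A)+\|\nu^A\|_\alpha^2\]
are finite; the dual identity $\kappa_\alpha\nu=\kappa_\alpha\nu^A$ $\nu^A$-a.e.\ (from above) yields $\kappa_\alpha(\nu,\nu^A)=\kappa_\alpha(\nu^A,\nu^A)=\|\nu^A\|_\alpha^2$, which gives (\ref{eq1-2-hen}). The main technical point is justifying the iterated-integral interpretation of $\kappa_\alpha(\nu-\nu^A,\nu-\nu^A)$ in part~(\ref{eq1-l-hen}) when the individual Jordan pieces need not be of finite $\alpha$-Riesz energy; this is handled precisely because the potential $\kappa_\alpha(\nu-\nu^A)$ is defined n.e.\ and is absolutely integrable with respect to both $\nu$ and $\nu^A$ by the two observations above.
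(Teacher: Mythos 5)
Your proof is correct and follows essentially the same route as the paper's: both reduce (\ref{eq1-l-hen}) to the two pointwise identities $\kappa_\alpha(\nu-\nu^A)=g\nu$ n.e.\ on $D$ (Lemma~\ref{l-hatg}) and $\kappa_\alpha(\nu-\nu^A)=0$ n.e.\ on $A$ (from (\ref{eq-bala-f1})), pass from ``n.e.''\ to a.e.\ statements via the $c_\alpha$-absolute continuity of $\nu^\pm$ and of $\nu^A$ (Corollary~\ref{C}), and then split the energy $\kappa_\alpha(\nu-\nu^A,\nu-\nu^A)$ into the contributions of $\nu$ and $\nu^A$. For the compact-support case, your bound on the compensating term $\kappa_\alpha\varepsilon_y^A(x)$ over $K\times K$ is just the integrated form of the paper's observation that $\kappa_\alpha|\nu|^A$ is continuous, hence bounded, on $S_D^\nu$, so the arguments coincide there as well.
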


\begin{proof} For the former assertion we observe that, by Lemma~\ref{l-hatg}, $g\nu$ is finite $c_g$-n.e.\ on $D$ and given by (\ref{hatg}). Besides, since $\nu\in\mathcal E_g(D)$,  the same holds $|\nu|$-a.e.\ on $D$, cf.\ \cite[Lemma~2.3.1]{Fu1}. Integrating (\ref{hatg}) with respect to $\nu^\pm$, we therefore obtain by subtraction
\begin{equation}\label{l1}+\infty>g(\nu,\nu)=\kappa_\alpha(\nu-\nu^A,\nu).\end{equation}
As $\kappa_\alpha(\nu-\nu^A)=0$ n.e.\ on $A$ by (\ref{eq-bala-f1}), while $\nu^A$ is $c_\alpha$-absolutely continuous by Corollary~\ref{C}, we also have
\begin{equation}\label{l22}\kappa_\alpha(\nu-\nu^A,\nu^A)=0,\end{equation}
which results in (\ref{eq1-l-hen}) when combined with (\ref{l1}). Furthermore, since $|\nu|$ along with $\nu$ is extendible and has finite $\alpha$-Green energy, we likewise get
relation (\ref{l1}) with $|\nu|$ in place of~$\nu$.

If, moreover, $\nu$ has compact support in $D$, then $\kappa_\alpha(|\nu|,|\nu|^A)$ is finite, because $\kappa_\alpha|\nu|^A$ is continuous on $D$ and hence bounded on the compact set $S_D^\nu$. In view of (\ref{l1}) with $|\nu|$ in place of $\nu$, we thus see that $\nu$ and $\nu^A$ have finite $\alpha$-Riesz energy and, hence, relation (\ref{eq1-l-hen}) is in fact the former equality in (\ref{eq1-2-hen}). Furthermore, then $\|\nu^A\|^2_\alpha=\kappa_\alpha(\nu,\nu^A)$, cf.\ (\ref{l22}), and
the former equality in (\ref{eq1-2-hen}) yields the latter.\end{proof}

\subsection{Potential-theoretic principles for the $\alpha$-Green kernel}

We proceed to show that $g_D^\alpha$ satisfies the domination principle, even in a stronger form which includes the complete maximum principle and hence the Frostman maximum principle.

\begin{theorem}\label{th-dom-pr} Let\/ $\mu\in\mathcal E^+_g$, let\/ $\nu\in\mathfrak M^+(D)$ be an extendible measure, and let\/ $w$ be a positive $\alpha$-superharmonic function on\/ $\mathbb R^n$. Suppose that
\[g\mu\leqslant g\nu+w\quad\mu\text{-a.e.\ on \ }D.\] Then the same inequality holds on all of\/~$D$.\end{theorem}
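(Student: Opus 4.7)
The strategy is to translate the $\alpha$-Green inequality into an $\alpha$-Riesz inequality on all of $\mathbb{R}^n$ by means of the identity $g\sigma=\kappa_\alpha\sigma-\kappa_\alpha\sigma^A$ from Lemma~\ref{l-hatg}, and then to invoke the $\alpha$-Riesz complete maximum principle, thereby reducing the $\alpha$-Green case to the known $\alpha$-Riesz case.

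Granting the extendibility of $\mu\in\mathcal{E}_g^+$ (which should be arranged by truncation to compact subsets $K\subset D$ if it is not secured by an auxiliary lemma), Lemma~\ref{l-hatg} applied to $\mu$ and to $\nu$ gives
\[
g\mu=\kappa_\alpha\mu-\kappa_\alpha\mu^A,\qquad g\nu=\kappa_\alpha\nu-\kappa_\alpha\nu^A
\]
n.e.\ on $D$. Since $\mu\in\mathcal{E}_g^+$ is $c_g$-absolutely continuous and $c_g$ coincides with $c_\alpha$ on $D$, these identities hold $\mu$-a.e. Substituting into the hypothesis and rearranging, and using that $\mu$ is carried by $D$, one obtains
\[
\kappa_\alpha(\mu+\nu^A)\leqslant\kappa_\alpha(\nu+\mu^A)+w\quad\mu\text{-a.e.\ on }\mathbb{R}^n.
\]

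Next I enlarge the exceptional set to one which is $(\mu+\nu^A)$-negligible. On $A$, relation~(\ref{eq-bala-f1}) applied to $\mu$ and to $\nu$ yields $\kappa_\alpha\mu^A=\kappa_\alpha\mu$ and $\kappa_\alpha\nu^A=\kappa_\alpha\nu$ n.e., so $\kappa_\alpha(\mu+\nu^A)=\kappa_\alpha(\nu+\mu^A)$ n.e.\ on $A$. Since $\nu^A$ is carried by $A$ and $c_\alpha$-absolutely continuous by Corollary~\ref{C}, the inequality holds trivially $\nu^A$-a.e.\ (using $w\geqslant0$), hence $(\mu+\nu^A)$-a.e.\ on $\mathbb{R}^n$. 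The $\alpha$-Riesz complete maximum principle, in a form that allows a positive $\alpha$-superharmonic perturbation $w$ in place of a constant, then propagates this inequality to everywhere on $\mathbb{R}^n$. For any $x\in D$ the measures $\mu^A$ and $\nu^A$ are supported on $A$ away from $x$ and are extendible (being dominated in potential by $\mu$ and $\nu$), so $\kappa_\alpha\mu^A(x)$ and $\kappa_\alpha\nu^A(x)$ are finite; the pointwise Riesz inequality may therefore be rearranged, via Lemma~\ref{l-hatg} applied pointwise on $D$, to $g\mu(x)\leqslant g\nu(x)+w(x)$ for every $x\in D$.

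The main obstacle lies in justifying the invocation of the Riesz complete maximum principle: the measure $\mu+\nu^A$ need not have finite $\alpha$-Riesz energy, and the perturbation $w$ is a superharmonic function rather than a constant. Both difficulties are addressed by the same monotone approximation used in the proof of Theorem~\ref{th-bala-2}. Namely, choose increasing sequences $\mu_k,\nu_k\in\mathcal{E}_\alpha^+$ with $\kappa_\alpha\mu_k\uparrow\kappa_\alpha\mu$ and $\kappa_\alpha\nu_k\uparrow\kappa_\alpha\nu$ (so that also $\kappa_\alpha\mu_k^A\uparrow\kappa_\alpha\mu^A$ and $\kappa_\alpha\nu_k^A\uparrow\kappa_\alpha\nu^A$ pointwise, by the $\kappa_\alpha$-domination principle as in the proof of Theorem~\ref{th-bala-2}), apply the classical finite-energy version of the complete maximum principle to the approximations, and pass to the monotone pointwise limit. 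The Cartan--Deny generalisation from a constant $c$ to a positive $\alpha$-superharmonic $w$ is standard, following from the Riesz representation of $w$ and the domination principle applied to its representing measure.
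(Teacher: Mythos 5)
Your argument is correct, and its core coincides with the paper's: rewriting the hypothesis as $\kappa_\alpha(\mu+\nu^A)\leqslant\kappa_\alpha(\mu^A+\nu)+w$, enlarging the exceptional set to one that is negligible for the left-hand measure by means of (\ref{eq-bala-f1}) on $A$ and Corollary~\ref{C}, invoking the $\kappa_\alpha$-domination principle with an $\alpha$-superharmonic majorant (this is exactly what \cite[Theorems 1.27, 1.29]{L} provide, so no detour through the Riesz representation of $w$ is needed), and passing to a monotone limit of finite-energy approximants as in the proof of Theorem~\ref{th-bala-2}. Where you genuinely diverge is in the treatment of a general closed $A=D^c$: the paper first proves the theorem under the extra hypothesis that $A$ is compact, so that $\mu^A$ and $\nu^A$ are bounded measures carried by a compact set at positive distance from each $x\in D$, whence $\kappa_\alpha(\mu^A+\nu^A)<+\infty$ on $D$ and the final rearrangement back to $g\mu\leqslant g\nu+w$ is legitimate; the general case is then reduced to this one by a Kelvin transformation centred at a point of $D$ charged by neither $\mu$ nor $\nu$, which requires verifying $(\sigma^*)^{A^*}=(\sigma^A)^*$ and the covariance of Green potentials and $\alpha$-superharmonic functions. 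You instead observe that any $x\in D$ lies at positive distance from $A$ and that $\mu^A,\nu^A$ satisfy (\ref{1.3.10}), so their potentials are finite at every point of $D$; this observation is correct and does make the Kelvin step dispensable, a genuine shortening. Two points you leave parenthetical should be made explicit: (i) a general $\mu\in\mathcal E^+_g$ need not be extendible, so the truncation to $1_K\mu$, $K\subset D$ compact, must actually be performed first (as in the paper's closing paragraph, using $g(1_K\mu)\leqslant g\mu$ and $g(1_K\mu)\uparrow g\mu$); and (ii) the final pointwise rearrangement needs $g\sigma(x)=\kappa_\alpha\sigma(x)-\kappa_\alpha\sigma^A(x)$ at \emph{every} $x\in D$, not merely n.e.\ as stated in Lemma~\ref{l-hatg} --- this follows from (\ref{repr-th1}) once the finiteness of $\kappa_\alpha\sigma^A(x)$ is secured. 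Finally, approximating both $\mu$ and $\nu$ is admissible but superfluous: after truncation $\mu$ is extendible with compact support, so Lemma~\ref{l-hen} already places $\mu$ in $\mathcal E^+_\alpha(\mathbb R^n)$ and only $\nu$ requires the approximation.
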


\begin{proof} Suppose first that $S^\mu_D$ is compact (in $D$) and that $A=D^c$ is compact.
Then both $\mu$ and $\nu$ extend uniquely by $0$ to similarly denoted (Radon) measures on $\mathbb R^n$, and $\kappa_\alpha\mu\not\equiv+\infty$ and $\kappa_\alpha\nu\not\equiv+\infty$ according to Definition~\ref{d-ext}. Applying Lemma~\ref{l-hen} to $\mu$, we get $\mu\in\mathcal E^+_\alpha(\mathbb R^n)$ and $\mu^A\in\mathcal E^+_\alpha(A)$.
Furthermore, (\ref{hatg}) applied to $\mu$ and $\nu$ gives
\[
\kappa_\alpha\mu=\kappa_\alpha\mu^A+g\mu,\quad\kappa_\alpha\nu=\kappa_\alpha\nu^A+g\nu,
\]
and consequently
\begin{align}
\label{eqn-1}\kappa_\alpha(\mu+\nu^A)&=\kappa_\alpha(\mu^A+\nu^A)+g\mu,\\
\kappa_\alpha(\mu^A+\nu)&=\kappa_\alpha(\mu^A+\nu^A)+g\nu\label{eqn-2}
\end{align}
n.e.\ on $D$, and hence from $g\mu\leqslant g\nu+w$ $\mu$-a.e.\ on $D$
\begin{equation}\label{4.9}
\kappa_\alpha(\mu+\nu^A)\leqslant\kappa_\alpha(\mu^A+\nu)+w
\end{equation}
$\mu$-a.e.\ on $D$, and actually $\mu$-a.e.\ on $\mathbb R^n$ because $\mu(A)=0$.

As seen from the proof of Theorem \ref{th-bala-2}, $\kappa_\alpha\nu^A$ is the pointwise limit of an increasing sequence $\kappa_\alpha\nu^A_k$, $k\in\mathbb N$, where $\nu_k\in\mathcal E_\alpha^+(\mathbb R^n)$ and $\kappa_\alpha\nu_k\uparrow\kappa_\alpha\nu$ (as $k\to+\infty$). From relation (\ref{4.9}) we have in particular
\[\kappa_\alpha(\mu+\nu^A_k)\leqslant\kappa_\alpha(\mu^A+\nu)+w\quad\mu\text{-a.e.\ on \ }\mathbb R^n.\]
The same inequality holds $\nu^A_k$-a.e.\ on $D$ since $S^{\nu^A_k}_{\mathbb R^n}\subset A$ and also n.e.\ on $A$ (because so do both relations $\kappa_\alpha\mu=\kappa_\alpha\mu^A$ and $\kappa_\alpha\nu^A_k=\kappa_\alpha\nu_k\leqslant\kappa_\alpha\nu$), and consequently $\nu^A_k$-a.e.\ on $\mathbb R^n$ because $\nu^A_k\in\mathcal E^+_\alpha(A)$. Altogether
\[
\kappa_\alpha(\mu+\nu^A_k)\leqslant\kappa_\alpha(\mu^A+\nu)+w\quad(\mu+\nu^A_k)\text{-a.e.\ on \ }\mathbb R^n.
\]
Since the right-hand member of this inequality is a positive $\alpha$-superharmonic function on $\mathbb R^n$ while $\mu+\nu^A_k\in\mathcal E^+_\alpha(\mathbb R^n)$, we infer by the $\kappa_\alpha$-domination principle \cite[Theorems~1.27, 1.29]{L} followed by making $k\to+\infty$ that
\[
\kappa_\alpha(\mu+\nu^A)\leqslant\kappa_\alpha(\mu^A+\nu)+w\quad\text{everywhere on \ }\mathbb R^n.
\]
Combining this with (\ref{eqn-1}) and (\ref{eqn-2}) and noting that $\kappa_\alpha(\mu^A+\nu^A)<+\infty$ on $D$ leads to
\begin{equation}\label{4.9a}
g\mu\leqslant g\nu+w\quad\text{everywhere on \ }D.\end{equation}

If we drop the above extra hypothesis that $A$ be compact, we choose $y\in D$ neither charging $\nu$ nor $\mu$, and apply the Kel\-vin transformation with respect to $S(y,1)$. Then $A^*$, the inverse of
$A\cup\{\omega_{\mathbb R^n}\}$ with respect to $S(y,1)$, becomes compact; we denote by $D^*$ the (connected) complement of $A^*$ to $\mathbb R^n$. Observe that
\begin{equation}\label{star}(\nu^*)^{A^*}=(\nu^A)^*,\quad (\mu^*)^{A^*}=(\mu^A)^*.\end{equation}
Indeed, by relations (\ref{KP}) and (\ref{eq-bala-f1}),
\begin{align*}\kappa_\alpha(\nu^A)^*(x^*)&=|x-y|^{n-\alpha}\kappa_\alpha\nu^A(x)=|x-y|^{n-\alpha}\kappa_\alpha\nu(x)\\
{}&=\kappa_\alpha\nu^*(x^*)=\kappa_\alpha(\nu^*)^{A^*}(x^*)\end{align*}
for nearly every $x\in A$, or equivalently for nearly every $x^*\in A^*$. Here we have used the fact that the properties $c_\alpha(E^*)=0$ and $c_\alpha(E)=0$, $E\subset A$, are equivalent, cf.\ \cite[Chapter~IV, Section~5, n$^\circ$\,19]{L}. When combined with Corollary~\ref{C} this fact also yields that $(\nu^A)^*$ and $(\nu^*)^{A^*}$ are both $c_\alpha$-absolutely continuous. Therefore, by Corollary~\ref{C1}, the very last display
establishes the former equality (\ref{star}). The proof of the latter is similar.

By Lemma~\ref{l-hen} and identities (\ref{K}) and (\ref{star}), in our assumptions
\begin{align*}+\infty>g^\alpha_D(\mu,\mu)&=\|\mu\|^2_\alpha-\|\mu^A\|^2_\alpha=\|\mu^*\|^2_\alpha-\|(\mu^A)^*\|^2_\alpha\\
{}&=\|\mu^*\|^2_\alpha-\|(\mu^*)^{A^*}\|^2_\alpha=g^\alpha_{D^*}(\mu^*,\mu^*),\end{align*}
so that $g^\alpha_{D^*}(\mu^*,\mu^*)<+\infty$. Furthermore, $\nu^*\in\mathfrak M^+(D^*)$ remains extendible from $D^*$ along with $\nu$ from $D$ since by (\ref{KP}), $\kappa_\alpha\nu^*\not\equiv+\infty$ along with
$\kappa_\alpha\nu$. Besides, by (\ref{hatg}), (\ref{KP}) and (\ref{star}),
\begin{align*}g^\alpha_{D^*}\mu^*(x^*)&=\kappa_\alpha\mu^*(x^*)-\kappa_\alpha(\mu^*)^{A^*}(x^*)\\
  {}&=|x-y|^{n-\alpha}\bigl(\kappa_\alpha\mu(x)-\kappa_\alpha\mu^A(x)\bigr)=|x-y|^{n-\alpha}g_D^\alpha\mu(x)
\end{align*}
and likewise $g^\alpha_{D^*}\nu^*(x^*)=|x-y|^{n-\alpha}g^\alpha_{D}\nu(x)$.

Following Riesz \cite{R} (see also \cite{Ca2} for $\alpha=2$), we define the Kelvin transformation $u^*$ of an $\alpha$-superharmonic function $u$ on $\mathbb R^n$ with respect to $S(y,1)$ by $u^*(x^*)=|x-y|^{n-\alpha}u(x)$; then $(u^*)^*=u$ and $u^*$ is $\alpha$-superharmonic on $\mathbb R^n$, like $u$, cf.\ \cite[pp.~13--14]{R} and \cite[p.~275]{Ca2}. In view of the assumption $g^\alpha_{D}\mu\leqslant g^\alpha_{D}\nu+w$ $\mu$-a.e.\ on $D$, we therefore conclude from the above paragraph that $g^\alpha_{D^*}\mu^*\leqslant g^\alpha_{D^*}\nu^*+w^*$ $\mu^*$-a.e.\ on $D^*$, cf.\ (\ref{kelv-m}). According to what we have already proved, this implies $g^\alpha_{D^*}\mu^*\leqslant g^\alpha_{D^*}\nu^*+w^*$, or equivalently $g\mu\leqslant g\nu+w$ everywhere on $D$, and thus inequality (\ref{4.9a}) as claimed.

Finally, if we also drop the extra hypothesis that $\mu$ have compact support, there is an increasing sequence of compact sets $K$ with the union $D$. For each $K$ we have $g(1_K\mu)\leqslant g\nu+w$ $\mu$-a.e.\ on $D$, in particular $\bigl(1_K\mu\bigr)$-a.e., and therefore everywhere on $D$ as shown above. By varying $K$, the theorem follows.\end{proof}

\begin{remark}\label{rem-pr} The complete maximum principle corresponds to the case where the function $w$ in Theorem~\ref{th-dom-pr} reduces to a constant $c\geqslant0$, the domination principle to that where $w=0$, and the Frostman maximum principle to $\nu=0$ and $w=c$.\end{remark}

\begin{corollary}{\rm(Continuity principle)}\label{cont-pr} If the support\/ $S^\mu_D$ of\/ $\mu\in\mathfrak M^+(D)$ is compact and the restriction of\/ $g\mu$ to\/ $S^\mu_D$ is continuous, then\/ $g\mu$ is continuous on all of\/~$D$.\end{corollary}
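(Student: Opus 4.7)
The plan is to split $g_D^\alpha\mu$ via Lemma~\ref{l-hatg} into $g\mu=\kappa_\alpha\mu-\kappa_\alpha\mu^A$ and to reduce the assertion to the Evans--Vasilesco continuity principle for the $\alpha$-Riesz kernel itself, which $\kappa_\alpha$ satisfies by the summary recorded at the beginning of Section~\ref{sec2}.

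The first and only substantive step is to show that the compensating term $\kappa_\alpha\mu^A$ is finite and continuous on all of $D$. By the integral representation of Theorem~\ref{th-int-rep} together with (\ref{repr-th1}), one has $\kappa_\alpha\mu^A(x)=\int\kappa_\alpha\varepsilon_y^A(x)\,d\mu(y)$ for every $x\in D$. Recall from the beginning of Section~\ref{sec4} that the map $(x,y)\mapsto\kappa_\alpha\varepsilon_y^A(x)$ is finite and continuous on $D\times D$. Given $x_0\in D$, fix a compact neighbourhood $V\subset D$ of $x_0$; this map is then uniformly continuous on the compact set $V\times S_D^\mu$, and since $\mu$ is bounded (being compactly supported), one may pass the limit $x\to x_0$ under the integral sign to conclude continuity of $\kappa_\alpha\mu^A$ at $x_0$.

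With this in hand, on $S_D^\mu$ both $g\mu$ (by hypothesis) and $\kappa_\alpha\mu^A$ are finite and continuous, hence so is their sum $\kappa_\alpha\mu=g\mu+\kappa_\alpha\mu^A$. Regarding $\mu$ (extended by $0$) as a bounded positive Radon measure on $\mathbb R^n$ with compact support $S_D^\mu$, the continuity principle for $\kappa_\alpha$ forces $\kappa_\alpha\mu$ to be continuous on all of $\mathbb R^n$. Subtracting $\kappa_\alpha\mu^A$, already continuous on $D$ by the previous step, yields continuity of $g\mu=\kappa_\alpha\mu-\kappa_\alpha\mu^A$ throughout $D$. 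I do not foresee any genuine obstacle: the continuity of $\kappa_\alpha\mu^A$ is the only piece of real work, and it is a routine interchange-of-limit argument given the joint continuity of $(x,y)\mapsto\kappa_\alpha\varepsilon_y^A(x)$ on $D\times D$ and the compactness of $S_D^\mu$; the remaining steps are direct appeals to results already in place.
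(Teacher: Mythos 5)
Your proof is correct, but it takes a genuinely different route from the paper's. The paper disposes of this corollary in one line: $g$ is continuous off the diagonal and $+\infty$ on it (noted at the start of Section~\ref{sec4}), it satisfies Frostman's maximum principle (Theorem~\ref{th-dom-pr} with $\nu=0$ and $w=c$, cf.\ Remark~\ref{rem-pr}), and for such kernels the implication (ii)$\Rightarrow$(i) recorded at the beginning of Section~\ref{sec-pr} (Ohtsuka, Choquet) yields the continuity principle. You instead split $g\mu=\kappa_\alpha\mu-\kappa_\alpha\mu^A$, prove the compensating term is continuous on $D$ by uniform continuity of $(x,y)\mapsto\kappa_\alpha\varepsilon_y^A(x)$ on compacta, and reduce to the Evans--Vasilesco principle for $\kappa_\alpha$ itself. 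What your route buys is independence from the $g$-maximum principle, i.e.\ from the comparatively heavy Theorem~\ref{th-dom-pr}; what it costs is reliance on the joint continuity of $\kappa_\alpha\varepsilon_y^A(x)$ on $D\times D$, which the paper only cites from an external source, plus a slightly longer argument. One small point to make explicit: Lemma~\ref{l-hatg} asserts the identity $g\mu=\kappa_\alpha\mu-\kappa_\alpha\mu^A$ only nearly everywhere on $D$, and agreement n.e.\ would not by itself transfer continuity; but since you have shown $\kappa_\alpha\mu^A(x)=\int\kappa_\alpha\varepsilon_y^A(x)\,d\mu(y)$ to be finite at \emph{every} $x\in D$, the integral $\int\bigl[\kappa_\alpha\varepsilon_y(x)-\kappa_\alpha\varepsilon_y^A(x)\bigr]\,d\mu(y)$ splits at every such $x$, so the identity in fact holds everywhere on $D$ and your subtraction step is legitimate.
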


\begin{proof} As observed at the beginning of Section 2.2,  $g$ satisfies the continuity principle in consequence of Frostman's maximum principle, cf.\ Theorem~\ref{th-dom-pr} and Remark~\ref{rem-pr}.\end{proof}

\begin{theorem} {\rm(Energy principle)}\label{th-pos-def} $g=g_D^\alpha$ is strictly positive definite.\end{theorem}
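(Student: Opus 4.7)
The plan is to derive, for a signed measure $\nu\in\mathcal{E}_g$, the energy identity
\[
\|\nu\|_g^2=\kappa_\alpha(\nu-\nu^A,\nu-\nu^A),\qquad\nu^A:=(\nu^+)^A-(\nu^-)^A,
\]
and then invoke the strict positive definiteness of $\kappa_\alpha$ (Theorem~1.15 in \cite{L}). Since $\nu$ (extended by~$0$) is concentrated on $D$ while $\nu^A$ is concentrated on $A=D^c$, with $D\cap A=\emptyset$, the vanishing of $\nu-\nu^A$ as a signed Radon measure on $\mathbb{R}^n$ forces $\nu=0$ (and $\nu^A=0$); that is the heart of the argument.

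First I would handle the compactly supported case. Assume $\nu\in\mathcal{E}_g$ has compact support $K\subset D$. Then $|\nu|=\nu^++\nu^-$ is extendible with compact support, so the latter part of Lemma~\ref{l-hen} applies to $\nu^\pm$: both $\nu^\pm$ and their balay\'ees $(\nu^\pm)^A$ lie in $\mathcal{E}_\alpha(\mathbb R^n)$. Using (\ref{hatg}) to write $g\lambda=\kappa_\alpha\lambda-\kappa_\alpha\lambda^A$, integrating against $d\nu^\pm$, and exploiting both the symmetry relation (\ref{alternative2}) and the equality $\kappa_\alpha\sigma^A=\kappa_\alpha\sigma$ (valid n.e.\ on $A$, hence $\sigma^A$-a.e.\ by Corollary~\ref{C}), I obtain the bilinear identity $g(\mu,\lambda)=\kappa_\alpha(\mu-\mu^A,\lambda-\lambda^A)$ for compactly supported $\mu,\lambda\in\mathcal{E}_g^+$. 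Polarizing gives the displayed identity for signed $\nu$, and the Riesz energy principle yields $g(\nu,\nu)\geqslant 0$ with equality only when $\nu-\nu^A=0$, which as noted above forces $\nu=0$.

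For a general $\nu\in\mathcal{E}_g$ I would exhaust $D$ by compact sets $K_j\uparrow D$ and set $\nu_j^\pm:=1_{K_j}\nu^\pm$, $\nu_j:=\nu_j^+-\nu_j^-$. Monotone convergence applied to the positive l.s.c.\ kernel $g$ gives $g(\nu_j^\pm,\nu_j^\pm)\uparrow g(\nu^\pm,\nu^\pm)$ and $g(\nu_j^+,\nu_j^-)\uparrow g(\nu^+,\nu^-)$, so $g(\nu_j,\nu_j)\to g(\nu,\nu)$. The compact-support case gives $g(\nu_j,\nu_j)\geqslant 0$, hence $g(\nu,\nu)\geqslant 0$; positive definiteness of $g$ on $\mathcal{E}_g$ is thus established and Cauchy--Schwarz becomes available.

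The main obstacle is upgrading positive definiteness to strictness for $\nu\in\mathcal{E}_g$ that need not be extendible (e.g.\ when $D$ is unbounded and the Riesz-decay condition (\ref{1.3.10}) fails for $\nu^\pm$), since Lemma~\ref{l-hen} requires extendibility. Supposing $\|\nu\|_g=0$, Cauchy--Schwarz gives $g(\nu,\mu)=0$ for every $\mu\in\mathcal{E}_g$, in particular for every compactly supported $\mu\in\mathcal{E}_g^+$; this translates to $\int(g\nu^+-g\nu^-)\,d\mu=0$ for a rich class of such $\mu$, whence $g\nu^+=g\nu^-$ n.e.\ on $D$. Two applications of the domination principle (Theorem~\ref{th-dom-pr}, with $w=0$), once with $\mu=\nu^+$ and once with $\mu=\nu^-$, both being in $\mathcal{E}_g^+$, upgrade this to $g\nu^+=g\nu^-$ everywhere on $D$; equivalently, $g(\nu^+ - \nu^-)\equiv 0$. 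Applying the strict positive definiteness from the compact-support case to each approximation $\nu_j$ and passing to the limit (using $\|\nu_j\|_g\to 0$) then closes the argument and yields $\nu=0$.
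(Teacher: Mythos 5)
Your reduction to the compactly supported case is sound and in substance reproduces Lemma~\ref{l-hen}: for $\nu\in\mathcal E_g$ with compact support one gets $\|\nu\|_g^2=\|\nu-\nu^A\|_\alpha^2$, and since $\nu$ lives on $D$ while $\nu^A$ lives on $A=D^c$, the $\alpha$-Riesz energy principle forces $\nu=0$ when this vanishes. The monotone exhaustion argument for positive definiteness of $g$ on all of $\mathcal E_g$ is also fine. Note that this is already a different route from the paper, which obtains positive definiteness from Frostman's maximum principle via Ninomiya--Choquet and, for strictness, simply defers to Doob (for $\alpha=2$) and to the latter part of the proof of Theorem~2.2 in \cite{Z}; so a self-contained argument would be a genuine addition --- if it closed.

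It does not close, and the gap is exactly where you flag ``the main obstacle''. First, Theorem~\ref{th-dom-pr} requires the \emph{dominating} measure to be extendible; you invoke it with dominating measures $\nu^{\mp}$, which for a general $\nu\in\mathcal E_g$ need not be extendible (that is the very case you are trying to treat), so the upgrade from ``$g\nu^+=g\nu^-$ n.e.'' to ``everywhere'' is not licensed. Second, even granting $g\nu^+=g\nu^-$ everywhere on $D$, concluding $\nu^+=\nu^-$ needs a unicity principle for the kernel $g$; without extendibility you cannot pass through (\ref{hatg}) to the Riesz unicity theorem, and the only consequence you can extract directly (integrating against $\nu^{\pm}$) is $\|\nu\|_g^2=0$, which is circular. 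Third, the closing sentence is not an argument: strict positive definiteness of each compactly supported trace $\nu_j$ tells you $\nu_j=0$ only if $\|\nu_j\|_g$ is \emph{exactly} zero, whereas you only have $\|\nu_j\|_g\to0$. What you actually obtain is that $\sigma_j:=\nu_j-\nu_j^A\to0$ strongly in the signed space $\mathcal E_\alpha$, and the whole difficulty is that $\mathcal E_\alpha$ is strongly incomplete (see the Remark in Section~\ref{sec-pr}) and strong convergence of signed measures does not control their vague limits; the Riesz energies $\|\nu_j^{\pm}\|_\alpha$ may blow up, so no cluster-point argument on the positive and negative parts of $\sigma_j$ is available. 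Bridging this is precisely what the cited proof in \cite[Theorem~2.2]{Z} does, using Deny's completion of $\mathcal E_\alpha$ by tempered distributions; some such input (or a restriction of the claim to extendible measures) is needed to finish your proof.
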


\begin{proof} It is enough to consider the case $\alpha\ne2$, for the $2$-Green kernel is strictly positive definite by \cite[Chapter~XIII, Section~7]{Doob}.

As noted at the beginning of Section 2.2 with reference to \cite{N,Ch3}, $g^\alpha_D$ is positive definite in view of the Frostman maximum principle. For strict positive definiteness we refer to the latter part of the proof of \cite[Theorem~2.2]{Z}.\end{proof}

\subsection{Consistency of $g=g^\alpha_D$} We refer to Section~\ref{sec-pr}, Definition~\ref{def-cons}, for the notion of a consistent kernel introduced in \cite{Fu1,Fu2}.

\begin{lemma}\label{l-cons}$g$ is consistent if and only if, for every\/ $\lambda\in\mathcal E^+_g$ of compact support\/ $S^\lambda_D$, the map\/ $\mu\to g(\lambda,\mu)$ is vaguely continuous on the\/ {\rm(}vaguely compact\/{\rm)} truncated cone\/ $\mathcal E^\circ_g$.\end{lemma}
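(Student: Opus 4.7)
The plan is first to dispose of the ``only if'' direction, which is immediate: Definition~\ref{def-cons} demands vague continuity of $\mu\mapsto g(\lambda,\mu)$ on $\mathcal E_g^\circ$ for \emph{every} $\lambda\in\mathcal E_g^+$, in particular for those of compact support.

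For the ``if'' direction the strategy is strong approximation followed by a uniform-limit argument. Given $\lambda\in\mathcal E_g^+$, I would fix an increasing exhaustion $(K_n)$ of the locally compact space $D$ by compact sets and set $\lambda_n:=1_{K_n}\lambda$, which is compactly supported in $D$ and lies in $\mathcal E_g^+$ (since $\lambda_n\leqslant\lambda$ yields $g(\lambda_n,\lambda_n)\leqslant g(\lambda,\lambda)<+\infty$). The first key step is to verify that $\|\lambda-\lambda_n\|_g\to 0$. Positivity of $g$ (Lemma~\ref{lem-posit'}) allows monotone convergence: $g\lambda_n\uparrow g\lambda$ pointwise on $D$, and a second application gives $g(\lambda_n,\lambda_n)\uparrow g(\lambda,\lambda)$ as well as $g(\lambda,\lambda_n)\uparrow g(\lambda,\lambda)$. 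Expanding
\[\|\lambda-\lambda_n\|_g^2=g(\lambda,\lambda)-2g(\lambda,\lambda_n)+g(\lambda_n,\lambda_n)\]
then drives the left-hand side to $0$.

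With the strong density of compactly supported measures in $\mathcal E_g^+$ in hand, I would invoke the Cauchy--Schwarz inequality (available because $g$ is strictly positive definite, Theorem~\ref{th-pos-def}) to estimate, for every $\mu\in\mathcal E_g^\circ$,
\[|g(\lambda,\mu)-g(\lambda_n,\mu)|\leqslant\|\lambda-\lambda_n\|_g\,\|\mu\|_g\leqslant\|\lambda-\lambda_n\|_g,\]
where the bound on $\|\mu\|_g$ is precisely the point of truncating to $\mathcal E_g^\circ$. Hence $g(\lambda_n,\cdot)\to g(\lambda,\cdot)$ \emph{uniformly} on $\mathcal E_g^\circ$. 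By hypothesis each $g(\lambda_n,\cdot)$ is vaguely continuous on $\mathcal E_g^\circ$, and a uniform limit of continuous functions on the vaguely compact set $\mathcal E_g^\circ$ is continuous; this delivers condition (CW) in Definition~\ref{def-cons}.

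The only non-routine ingredient is the strong density of compactly supported measures in $\mathcal E_g^+$, and I expect that to be the main point of the argument; however, thanks to the positivity of $g$ it reduces to two applications of monotone convergence, and no deeper potential-theoretic input is needed.
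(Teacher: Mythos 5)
Your proposal is correct and follows essentially the same route as the paper: the ``only if'' part is immediate from Definition~\ref{def-cons}, and the ``if'' part rests on showing that the traces $1_{K}\lambda$ converge strongly to $\lambda$ (the paper gets $\|\lambda_K\|_g\to\|\lambda\|_g$ and $g(\lambda,\lambda_K)\to\|\lambda\|_g^2$ via positivity and lower semicontinuity of $g$, which is the same content as your monotone-convergence argument) and then transferring vague continuity by Cauchy--Schwarz. Your uniform-convergence phrasing of the last step is just a repackaging of the paper's $3\varepsilon$ estimate, so there is no substantive difference.
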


\begin{proof} According to Definition~\ref{def-cons} it suffices to establish the sufficiency part of the assertion.
Fix $\lambda\in\mathcal E_g^+$, and first observe that, for any increasing sequence of compact subsets $K\subset D$ with the union~$D$,
\[g(\lambda,\lambda)
  \leqslant\liminf_{K\uparrow D}\,g(\lambda_K,\lambda_K)
  \leqslant\limsup_{K\uparrow D}\,g(\lambda_K,\lambda_K)
  \leqslant g(\lambda,\lambda),\]
where $\lambda_K$ is the trace of $\lambda$ on~$K$. Indeed, since $g$ is positive and lower semicontinuous on $D\times D$ while $\lambda_K\to\lambda$ vaguely as $K\uparrow D$,  this follows from Lemma~\ref{lemma-semi} (cf.\ also \cite[Lemma~2.2.1]{Fu1}). Hence $\|\lambda\|_g=\lim_{K\uparrow D}\,\|\lambda_K\|_g$, and similarly
$\|\lambda\|^2_g=\lim_{K\uparrow D}\,g(\lambda,\lambda_K)$. Combining these two relations yields
\[\lambda_K\to\lambda\quad\text{strongly in \ } \mathcal E_g.\]

Let now $\mu_i\to\mu$ vaguely as $i\to+\infty$, where $\mu_i,\mu\in\mathcal E^\circ_g$. According to the last display, for any $\varepsilon>0$ there exists a compact set
$K\subset D$ such that $\|\lambda-\lambda_K\|_g<\varepsilon$. For this $K$ choose $i_0$ so that $|g(\lambda_K,\mu_i-\mu)|<\varepsilon$ for all $i\geqslant i_0$.
By the Cauchy--Schwarz (Bunyakovski) inequality, we thus have
\[|g(\lambda,\mu_i)-g(\lambda,\mu)|\leqslant|g(\lambda_K,\mu_i-\mu)|+|g(\mu_i,\lambda_K-\lambda)|+|g(\mu,\lambda_K-\lambda)|<3\varepsilon\]
for all $i\geqslant i_0$, and the lemma follows.\end{proof}

\begin{theorem}\label{th-cons} $g=g_D^\alpha$ is consistent, and hence altogether perfect.\end{theorem}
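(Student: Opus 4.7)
By Lemma~\ref{l-cons}, it suffices to show that, for every $\lambda\in\mathcal E_g^+$ of compact support $K\subset D$, the map $\mu\mapsto g(\lambda,\mu)$ is vaguely continuous on the vaguely compact cone $\mathcal E_g^\circ$; perfectness will then follow at once by combining this with the energy principle already proved in Theorem~\ref{th-pos-def}. The strategy is to approximate $\lambda$ strongly in $\mathcal E_g$ by measures whose $g$-potentials are well behaved, and then to combine a Cauchy--Schwarz error bound with the direct vague continuity of $\mu\mapsto g(\lambda_n,\mu)$ for the approximants.

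Concretely, I would take mollifications $\lambda_n:=\lambda\ast\phi_n$ with $\phi_n$ a standard smooth mollifier supported in $B(0,1/n)$. For $n$ large enough, $\lambda_n$ has compact support in $D$ with $C^\infty$ density, so $\kappa_\alpha\lambda_n$ is continuous on $\mathbb R^n$; the continuity principle for $g$ (Corollary~\ref{cont-pr}), applied to $\lambda_n$, then forces $g\lambda_n=\kappa_\alpha\lambda_n-\kappa_\alpha\lambda_n^A$ to be continuous on $D$. Standard mollification gives $\lambda_n\to\lambda$ strongly in $\mathcal E_\alpha$, which via Lemma~\ref{l-hen} and the contractivity of balayage in $\mathcal E_\alpha$ translates into strong convergence $\lambda_n\to\lambda$ in $\mathcal E_g$. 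For any net $\mu_i\to\mu$ vaguely in $\mathcal E_g^\circ$, Cauchy--Schwarz yields
\[
|g(\lambda-\lambda_n,\mu_i-\mu)|\leq\|\lambda-\lambda_n\|_g\bigl(\|\mu_i\|_g+\|\mu\|_g\bigr)\leq 2\|\lambda-\lambda_n\|_g,
\]
uniformly in $i$, reducing the task to showing $g(\lambda_n,\mu_i)\to g(\lambda_n,\mu)$ for each fixed~$n$.

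For this last step, $g\lambda_n$ is bounded and continuous on $D$; it vanishes at $\omega_{\mathbb R^n}$ by \eqref{1.3.10} applied to both $\lambda_n$ and $\lambda_n^A$, and it vanishes as $y\to y_0\in\partial D$ for every $\alpha$-regular boundary point $y_0$, since $\varepsilon_y^A\to\varepsilon_{y_0}^A=\varepsilon_{y_0}$ by Theorem~\ref{equiv}. The main technical obstacle I anticipate is the possible non-vanishing of $g\lambda_n$ at $\alpha$-irregular boundary points, which form a set $A_I$ of $\alpha$-capacity zero and hence obstruct the clean conclusion $g\lambda_n\in C_0(D)$. The plan for overcoming it is to exploit the uniform energy bound $\|\mu_i\|_g\leq 1$ via the standard capacity estimate $\mu(E)\leq\sqrt{c_g(E)}\,\|\mu\|_g$ for $\mu\in\mathcal E_g^+$: the $\mu_i$-mass of a sufficiently thin neighborhood of $A_I\cap\partial D$ is then small uniformly in $i$, so its contribution to $\int g\lambda_n\,d\mu_i$ is negligible, while on the remaining compact portion of $D$ (where $g\lambda_n$ is bounded and continuous, and eventually small) vague convergence of $\mu_i\to\mu$ gives the desired convergence. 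Sending $n\to\infty$ in the Cauchy--Schwarz bound then closes the argument, yielding consistency and thereby perfectness of $g$.
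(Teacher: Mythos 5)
Your overall skeleton (reduce via Lemma~\ref{l-cons} to $\lambda\geqslant0$ of compact support, approximate $\lambda$ strongly in $\mathcal E_g$ by measures $\lambda_n$ with well-behaved potentials, and absorb the error term by Cauchy--Schwarz) matches the paper's strategy, which invokes the equivalent criterion of \cite[Lemma~3.4.2]{Fu1}: it suffices to approximate $\lambda$ strongly by measures whose $g$-potentials lie in $C_0(D)$. The mollification step and the passage from strong $\mathcal E_\alpha$-convergence to strong $\mathcal E_g$-convergence via Lemma~\ref{l-hen} and contractivity of balayage are sound. The gap is in the last step, and you have correctly located \emph{where} it is but not how to close it: for fixed $n$ the function $g\lambda_n$ is bounded and continuous on $D$ and tends to $0$ at $\omega_{\mathbb R^n}$ and at every $\alpha$-regular point of $\partial D$, but it need not tend to $0$ at the $\alpha$-irregular boundary points, so $g\lambda_n\notin C_0(D)$ in general and vague convergence $\mu_i\to\mu$ alone does not yield $\int g\lambda_n\,d\mu_i\to\int g\lambda_n\,d\mu$.

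Your proposed repair --- bounding the $\mu_i$-mass of a thin neighborhood $U$ of $A_I\cap\partial D$ by $\sqrt{c_g(U\cap D)}\,\|\mu_i\|_g$ --- does not work. The set $A_I$ lies on $\partial D\subset A$, outside $D$, so the relevant set is $U\cap D$, a full-dimensional open subset of $D$, and its $g$-capacity is not controlled by $c_\alpha(A_I)=0$. Even though one can choose $U\supset A_I$ with $c_\alpha(U)$ small, the comparison between the two capacities goes the wrong way: $g\leqslant\kappa_\alpha$ on $D\times D$ forces $c_g(Q)\geqslant c_\alpha(Q)$ for $Q\subset D$, and subsets of $D$ clustering at $\partial D$ can have infinite $g$-capacity while their $\alpha$-capacity is small (this is precisely why Theorem~\ref{th-equi} must assume $c_g(F)<+\infty$ rather than $c_\alpha(F)<+\infty$). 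Trying instead $\mu_i(U\cap D)\leqslant\sqrt{c_\alpha(U\cap D)}\,\|\mu_i\|_\alpha$ fails because $\|\mu_i\|_\alpha$ is not bounded on $\mathcal E^\circ_g$. The paper sidesteps the irregular points entirely by a different choice of approximants: it exhausts $D$ by compacts $L_j$ (finite unions of small cubes) whose complements $F_j=\mathbb R^n\setminus L_j^\circ$ have \emph{no} $\alpha$-irregular points, puts $\mu_j:=\lambda^{F_j}$, and approximates $\lambda$ by $\lambda-\mu_{j,D}$; then $g(\lambda-\mu_{j,D})=\kappa_\alpha\lambda-\kappa_\alpha\mu_j$ vanishes identically off $L_j$ (with no exceptional set, by Corollary~\ref{cor-bal-reg}), so these potentials genuinely belong to $C_0(D)$, and $\mu_{j,D}\to0$ strongly in $\mathcal E_g$ follows from the monotonicity of $\kappa_\alpha\mu_j$ together with Cartan's lemma on decreasing sequences of potentials. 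Some such device for killing the contribution of the irregular boundary points is indispensable; your argument as written does not supply one.
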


For $\alpha=2$, perfectness is due to Cartan \cite{Ca} for $D=\mathbb R^n$, $n>2$,
and to Edwards \cite{E} when $D$ is a hyperbolic Riemann surface, in particular a
regular domain in $\mathbb R^n$, $n=2$.
The following proof is inspired by \cite{Ca} and \cite{E}.

\begin{proof} According to \cite[Lemma~3.4.2]{Fu1} it suffices to show that every $\lambda\in\mathcal E_g$ can be approximated strongly by (signed Radon) measures $\mu\in\mathcal E_g$ with $g\mu\in C_0(D)$. Without loss of generality we assume that $\lambda\geqslant0$, and by the proof of Lemma~\ref{l-cons} that $\lambda$ has compact support.

We begin by proving that this measure $\lambda$ can be approximated strongly in $\mathcal E^+_g$ by measures $\lambda_k\in\mathcal E^+_g$, $k\in\mathbb N$, majorized by $\lambda$ and such that the potentials $g\lambda_k$ are bounded and continuous (on $D$). According to the latter part of Lemma~\ref{l-hen} we have $\lambda\in\mathcal E^+_\alpha(\mathbb R^n)$ and hence there exists by \cite[Theorem~3.7]{L} an increasing sequence of measures $\lambda_k\in\mathcal E^+_\alpha(\mathbb R^n)$ possessing the following two properties:
\begin{itemize}
\item[\rm(a)] $\lambda_k\to\lambda$ vaguely and strongly in $\mathcal E_\alpha^+(\mathbb R^n)$,
\item[\rm(b)] $\kappa_\alpha\lambda_k$ belong to $C(\mathbb R^n)$ and $\kappa_\alpha\lambda_k\uparrow\kappa_\alpha\lambda$.
\end{itemize}
It follows that, for any $f\in C^+_0(\mathbb R^n)$,
\[\lambda(f)=\lim_{k}\,\lambda_k(f)\geqslant\lambda_k(f)\quad\text{for every\ }k,\]
and so, indeed, $\lambda_k\leqslant\lambda$. This implies that $\lambda_k$ has compact support $S_{\mathbb R^n}^{\lambda_k}\subset S_D^\lambda$, hence $\kappa_\alpha\lambda_k$ is (continuous and) bounded on $S_{\mathbb R^n}^{\lambda_k}$. Since $\kappa_\alpha\lambda^A_k$ is continuous and bounded on $S_{\mathbb R^n}^{\lambda_k}$ as well, so is $g\lambda_k$. Application of Frostman's maximum principle and the continuity principle for the kernel $g$, cf.\ Theorem~\ref{th-dom-pr}, Remark~\ref{rem-pr}, and Corollary~\ref{cont-pr}, shows that each of $g\lambda_k$, $k\in\mathbb N$, is continuous and bounded on all of~$D$.

Furthermore, as seen from the proof of Theorem~\ref{th-bala-2}, $\kappa_\alpha\lambda_k^A\uparrow\kappa_\alpha\lambda^A$. Since $\lambda^A$ and $\lambda_k^A$, $k\in\mathbb N$, belong to $\mathcal E^+_\alpha(\mathbb R^n)$, it follows from an analogue of \cite[Proposition~4]{Ca} for the (perfect) kernel $\kappa_\alpha$ that $\|\lambda_k^A-\lambda^A\|_\alpha\to0$ (as $k\to+\infty$). Thus, by the latter part of Lemma~\ref{l-hen},
\[\|\lambda_k-\lambda\|_g^2=\|\lambda_k-\lambda\|_\alpha^2-\|\lambda^A_k-\lambda^A\|_\alpha^2\to0\]
as was to be proved.

We may therefore assume from the beginning that, for the given measure $\lambda\in\mathcal E^+_g$ with compact support $S^\lambda_D$ in $D$, both $g\lambda$ and $\kappa_\alpha\lambda$ are bounded and continuous (on $D$ and $\mathbb R^n$, respectively).

We next exhaust $D$ by an increasing sequence of compact sets $L_j$ contained in the
interior $L_{j+1}^\circ$ of $L_{j+1}$ and such that the (closed) sets $F_j:=\mathbb R^n\setminus L_j^\circ$ have no $\alpha$-irregular points.\footnote{For example, let $L_j$ be the (finite) union of all translates of the cube $K_j:=[0,2^{-j}]^n$ by vectors whose coordinates are $2^{-j}$ multiplied by integers $h$ with $|h|\leqslant j$ and such that the translated cubes are contained in $D$. Then $F_j$ consists of only $\alpha$-reg\-ular points, for so does any cube $Q:=[0,a]^n$ in $\mathbb R^n$. In fact, fix $x\in Q$ and choose $0<r<a$ small enough that, for some $i=0,1,2,\dots,n$, $B(x,r)\cap Q$ is one of $2^i$ congruent non-overlapping sets exhausting the ball $B(x,r)$, and hence $c_\alpha\bigl(Q\cap B(x,r)\bigr)\geqslant2^{-i}c_\alpha\bigl(B(x,r)\bigr)$ by subadditivity of $c_\alpha$. By the Wiener criterion in the form used in \cite[p.~289, Eq.~5.1.7]{L} (taking $r:=q^k$, $k\in\mathbb N$, where $0<q<1$), it follows that indeed $x$ is an $\alpha$-reg\-ular point of~$Q$.}
Denote by $\mu_j\in\mathcal E^+_\alpha(F_j)$ the
sweeping of $\lambda\in\mathcal E_\alpha^+(\mathbb R^n)$ onto $F_j$.
Then $\kappa_\alpha\mu_j=\kappa_\alpha\lambda$ everywhere on $F_j$, cf.\ Corollary~\ref{cor-bal-reg}, and consequently $\kappa_\alpha\mu_j$ restricted to $S^{\mu^j}_{\mathbb R^n}$ is continuous. According to the continuity principle for the kernel $\kappa_\alpha$, cf.\ \cite[Theorem~1.7]{L}, $\kappa_\alpha\mu_j$ therefore belongs to $C(\mathbb R^n)$. Furthermore, since $\mu_j$ is the sweeping of $\lambda$ on $F_j$, they both have the same sweeping $\lambda^A=\mu^A_j$ on $A\subset F_j$, cf.\ Corollary~\ref{rest-fin}.

Write $\lambda_j:=\lambda-\mu_{j,D}$ where $\mu_{j,D}$ denotes the trace of $\mu_j$ on $D$. Being bounded, $\lambda$ and $\mu_{j,D}$ are both extendible, and we obtain from Lemma~\ref{l-hatg} on all of $D$,
\begin{align*}g\lambda&=\kappa_\alpha\lambda-\kappa_\alpha\lambda^A=\kappa_\alpha\lambda-\kappa_\alpha\mu_j^A,\\
g\mu_{j,D}&=\kappa_\alpha\mu_{j,D}-\kappa_\alpha\mu_{j,D}^A,\end{align*}
and therefore
\[g\lambda_j=[\kappa_\alpha\lambda-\kappa_\alpha\mu_{j,D}]-[\kappa_\alpha\mu_j^A-\kappa_\alpha\mu_{j,D}^A].\]
But
\[\kappa_\alpha\mu_j^A-\kappa_\alpha\mu_{j,D}^A=\kappa_\alpha\mu_{j,A}^A=\kappa_\alpha\mu_{j,A}\]
everywhere on $\mathbb R^n$ because $\mu_{j,A}^A=\mu_{j,A}$ in consequence of $\mu_{j,A}\in\mathcal E^+_\alpha(A)$. Combining the last two displays gives
\[g\lambda_j=\kappa_\alpha\lambda-\kappa_\alpha\mu_{j,D}-\kappa_\alpha\mu_{j,A}=\kappa_\alpha\lambda-\kappa_\alpha\mu_j\]
on all of $D$, and hence $g\lambda_j$ is indeed of the class $C_0(D)$ since it equals $0$ off the compact set $L_j\subset D$.

It thus remains to show that $\lambda_j\to\lambda$ strongly in $\mathcal E_g$, or equivalently, $\mu_{j,D}\to0$ strongly in $\mathcal E_g$. (Note that $\mu_{j,D}\in\mathcal E^+_g(D)$ since $g\leqslant\kappa_\alpha$ on $D\times D$.)
The proof at this
point in \cite{E} uses $2$-harmonic functions, but cannot be adapted to the present
case $\alpha\leqslant2$ because $\alpha$-har\-mon\-icity for $\alpha<2$ is not a local property.
Instead we use the fact that sweeping of any
measure $\nu\in\mathcal E^+_\alpha(\mathbb R^n)$ onto a closed set $F\subset\mathbb R^n$ amounts to orthogonal projection in the pre-Hil\-bert space $\mathcal E_\alpha(\mathbb R^n)$ onto the convex cone $\mathcal E^+_\alpha(F)$ of all $\mu\in\mathcal E^+_\alpha(\mathbb R^n)$ supported by $F$, cf.\ Theorems~\ref{th-bala-f}, \ref{th-bala-2} and Remark~\ref{ba-finite}. This cone is also strongly closed in $\mathcal E^+_\alpha(\mathbb R^n)$ because $\kappa_\alpha$ is perfect and hence
the strong topology on $\mathcal E^+_\alpha(\mathbb R^n)$ is
finer than the vague topology (cf.\ Definition~\ref{def-cons}).

By Lemma~\ref{l-hen} and the above equality $\mu_{j,A}^A=\mu_{j,A}$,
\begin{equation}\label{end}\|\mu_{j,D}\|^2_g=\|\mu_{j,D}-\mu_{j,D}^A\|^2_\alpha=\|\mu_j-\mu_j^A\|^2_\alpha=\|\mu_j\|^2_\alpha-\|\mu_j^A\|^2_\alpha.\end{equation}
The potentials $\kappa_\alpha\mu_j$, $j\in\mathbb N$,
form a decreasing sequence because, by Corollary~\ref{rest-fin},
\[\mu_{j+1}=\lambda^{F_{j+1}}=(\lambda^{F_j})^{F_{j+1}}=\mu_j^{F_{j+1}}\]
and hence
\[\kappa_\alpha\mu_{j+1}(x)\leqslant\kappa_\alpha\mu_{j}(x)\quad\text{for all \ }x\in\mathbb R^n, \ j\in\mathbb N.\]
As in \cite[Proposition~4]{Ca}, $(\mu_j)$ is therefore Cauchy in $\mathcal E^+_\alpha(\mathbb R^n)$ and hence converges strongly to any of its vague cluster points $\mu$. Since $\mu$ belongs to $\mathcal E^+_\alpha(F_j)$ for every $j$, it is supported by $A=\bigcap_{j}\,F_j$, while
\[\kappa_\alpha\mu=\lim_{j\to+\infty}\,\kappa_\alpha\mu_{j}=\kappa_\alpha\lambda\quad\text{n.e.\ on \ } A,\]
the first equality being valid even n.e.\ on $\mathbb R^n$ (see, e.g., \cite[p.~166, Remark]{Fu1}).
This yields $\lambda^A=\mu$, cf.\ Corollary~\ref{C1}.

Furthermore, $\kappa_\alpha\mu_{j+1}^A=\kappa_\alpha\mu_{j+1}\leqslant\kappa_\alpha\mu_j=\kappa_\alpha\mu_j^A$ n.e.\ on $A$, which according to the $\kappa_\alpha$-dom\-ination principle \cite[Theorems~1.27, 1.29]{L} gives
$\kappa_\alpha\mu_{j+1}^A\leqslant\kappa_\alpha\mu_j^A$ everywhere on $\mathbb R^n$. We thus have the decreasing sequence $\bigl(\kappa_\alpha\mu_j^A\bigr)$ and, likewise as above, an analogue of \cite[Proposition~4]{Ca} for $\kappa_\alpha$ shows that the sequence $\bigl(\mu_j^A\bigr)$ is Cauchy in $\mathcal E^+_\alpha(A)$. Hence, $\mu_j^A\to\lambda^A$ in $\mathcal E^+_\alpha(A)$.
Letting $j\to+\infty$ in (\ref{end}) we see that $\mu_{j,D}\to0$ strongly in $\mathcal E^+_g(D)$ as desired.\end{proof}

\subsection{$\alpha$-Green equilibrium measure. Principle of positivity of mass}

\begin{theorem}\label{th-equi} For any relatively closed subset\/ $F$ of\/ $D$ with\/ $c_g(F)<+\infty$ there exists a unique\/ $\alpha$-Green equilibrium measure on\/ $F$, that is, a measure\/ $\gamma_{F,g}\in\mathcal E^+_g(F)$ such that\/ $\gamma_{F,g}(D)=\|\gamma_{F,g}\|_g^2=c_g(F)$ and
\begin{align}
\label{geq-1}g\gamma_{F,g}&=1\quad\text{n.e.\ on \ }F,\\
g\gamma_{F,g}&\leqslant1\quad\text{everywhere on \ }D.\notag
\end{align}
The measure\/ $\gamma_{F,g}$ is characterized uniquely within\/ $\mathcal E^+_g(F)$ by {\rm(\ref{geq-1})}, and it is the\/ {\rm(}unique\/{\rm)} solution to the problem of minimizing\/ $\alpha$-Green energy over the class\/ $\Gamma_F$ of all\/ $\nu\in\mathcal E_g(D)$ with\/ $g\nu\geqslant1$ n.e.\ on\/ $F$, i.e.
\begin{equation}\label{alt}c_g(F)=\|\gamma_{F,g}\|_g^2=\min_{\nu\in\Gamma_F}\,\|\nu\|_g^2.\end{equation}
Furthermore, relation {\rm(\ref{geq-1})} can be specified as follows:
\begin{equation}\label{geq-2}g\gamma_{F,g}(x)=1\quad\text{for every $\alpha$-regular \ }x\in F.\end{equation}
\end{theorem}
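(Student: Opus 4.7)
The plan is to derive everything except (\ref{geq-2}) from the general theory of perfect kernels (Theorem~\ref{th-cons}), the energy principle (Theorem~\ref{th-pos-def}), and the complete maximum principle (Theorem~\ref{th-dom-pr}); the sharpening (\ref{geq-2}) then requires an additional $\alpha$-fine-continuity argument that exploits the explicit decomposition $g\mu=\kappa_\alpha\mu-\kappa_\alpha\mu^A$.

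Since $g$ is perfect on the locally compact space $D$ and $F$ is closed in $D$ with $c_g(F)\in(0,+\infty)$, \cite[Theorem~4.1]{Fu1} yields a unique minimizer $\lambda_F\in\mathcal E_g^+(F,1)$ of $\|\cdot\|_g^2$, with $\|\lambda_F\|_g^2=1/c_g(F)$ and $g\lambda_F=1/c_g(F)$ n.e.\ on $F$ (standard Gauss variational argument). Rescaling, $\gamma_{F,g}:=c_g(F)\lambda_F$ lies in $\mathcal E_g^+(F)$ and satisfies $\gamma_{F,g}(D)=\|\gamma_{F,g}\|_g^2=c_g(F)$ together with (\ref{geq-1}). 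Because $\gamma_{F,g}\in\mathcal E_g^+$ is $c_g$-absolutely continuous, the n.e.\ equality in (\ref{geq-1}) is also $\gamma_{F,g}$-a.e., and Theorem~\ref{th-dom-pr} applied with $\mu=\gamma_{F,g}$, $\nu=0$, $w\equiv 1$ upgrades it to $g\gamma_{F,g}\leq 1$ everywhere on $D$. Uniqueness in $\mathcal E_g^+(F)$ under (\ref{geq-1}) follows by applying Theorem~\ref{th-dom-pr} symmetrically to any two candidate measures, yielding $g\gamma_1=g\gamma_2$ on $D$, and then invoking Theorem~\ref{th-pos-def}. For (\ref{alt}), any $\nu\in\Gamma_F$ satisfies $g\nu\geq g\gamma_{F,g}$ n.e.\ on $F$, hence $\gamma_{F,g}$-a.e.; integrating gives $g(\nu,\gamma_{F,g})\geq\|\gamma_{F,g}\|_g^2=c_g(F)$, and the Cauchy--Schwarz inequality $g(\nu,\gamma_{F,g})\leq\|\nu\|_g\,\sqrt{c_g(F)}$ then yields $\|\nu\|_g^2\geq c_g(F)$, with equality forcing $\nu=\gamma_{F,g}$ by the equality case of Cauchy--Schwarz combined with the normalization.

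The main obstacle is the pointwise strengthening (\ref{geq-2}). Since $\gamma_{F,g}$ is bounded, it is extendible by $0$ to $\mathbb R^n$ in the sense of Definition~\ref{d-ext}; Lemma~\ref{l-hatg} combined with the everywhere bound $g\gamma_{F,g}\leq 1$ gives the pointwise identity
\[g\gamma_{F,g}(x)=\kappa_\alpha\gamma_{F,g}(x)-\kappa_\alpha\gamma_{F,g}^A(x)\]
with both terms finite at every $x\in D$. The compensating potential $\kappa_\alpha\gamma_{F,g}^A$ is continuous on $D$, whereas $\kappa_\alpha\gamma_{F,g}$ is $\alpha$-super\-har\-monic on $\mathbb R^n$ and therefore $\alpha$-finely continuous at each point where it is finite; consequently $g\gamma_{F,g}$ is $\alpha$-finely continuous at every $x\in D$. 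Let $N\subset F$ denote the exceptional set on which (\ref{geq-1}) can fail, so $c_\alpha(N)=c_g(N)=0$. If $x\in F$ is $\alpha$-regular, then $F$ is not $\alpha$-thin at $x$, and by the Wiener criterion (\ref{W}) the removal of a set of $c_\alpha$-capacity zero does not affect thinness, so $F\setminus N$ is still not $\alpha$-thin at $x$. Thus $x$ is an $\alpha$-fine limit point of $F\setminus N$, on which $g\gamma_{F,g}\equiv 1$, and $\alpha$-fine continuity forces $g\gamma_{F,g}(x)=1$, establishing (\ref{geq-2}).
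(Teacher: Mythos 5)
Your proof is correct, and for everything except (\ref{geq-2}) it follows essentially the same route as the paper: both arguments reduce the existence, the normalization $\gamma_{F,g}(D)=\|\gamma_{F,g}\|_g^2=c_g(F)$, the everywhere bound, the uniqueness under (\ref{geq-1}) and the dual characterization (\ref{alt}) to the general theory of perfect kernels in \cite[Section~4.1]{Fu1} combined with Theorems~\ref{th-cons}, \ref{th-pos-def} and the maximum principle of Theorem~\ref{th-dom-pr}. Where you genuinely diverge is the pointwise sharpening (\ref{geq-2}). The paper first treats $F=K$ compact: it identifies $\gamma_{K,g}=\gamma_{K,\alpha}+(\gamma_{K,g}^A)^K$ by comparing $\alpha$-Riesz potentials n.e.\ on $K$ and invoking uniqueness within $\mathcal E_\alpha^+(K)$, then reads off equality at every point of $K\setminus K_{I,\alpha}$ from Corollaries~\ref{cor-bal-reg} and~\ref{reg-com} (the exactness of swept and equilibrium $\alpha$-Riesz potentials at regular points, themselves proved via the Kelvin transform), and finally passes to relatively closed $F$ by a compact exhaustion, using the $g$-domination principle for the lower bound and vague convergence plus lower semicontinuity for the upper bound. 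You instead work directly on $F$: you use the pointwise decomposition $g\gamma_{F,g}=\kappa_\alpha\gamma_{F,g}-\kappa_\alpha\gamma_{F,g}^A$ on $D$, the $\alpha$-fine continuity of the $\alpha$-superharmonic function $\kappa_\alpha\gamma_{F,g}$ at points of finiteness, and the stability of non-thinness under removal of a set of capacity zero. This is shorter and avoids the exhaustion, but it imports the $\alpha$-fine topology (fine continuity of $\alpha$-superharmonic functions, and non-thinness at $x$ meaning that $x$ is a fine limit point), which the paper defines through Brelot's framework but deliberately never exploits beyond the regular/irregular dichotomy; the paper's route stays entirely inside the balayage apparatus it has already built. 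Two details you should make explicit: the continuity on $D$ of $\kappa_\alpha\gamma_{F,g}^A$ for a measure that is bounded but not compactly supported in $D$ requires the (routine) domination $\sup_{x\in L}\kappa_\alpha\varepsilon_y^A(x)\leqslant C_L$ for all $y\in D$ and compact $L\subset D$ before passing the limit through the integral; and the exceptional set $N$ is Borel, hence capacitable, so the Wiener-criterion argument showing that $F\setminus N$ remains non-thin at $x$ is indeed legitimate with inner capacities.
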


\begin{proof}Except for the very last assertion the stated theorem is obtained from the perfectness of the kernel $g=g^\alpha_D$ (Theorem~\ref{th-cons}) and the Frostman maximum principle (cf.\ Theorem~\ref{th-dom-pr} and Remark~\ref{rem-pr}) in view of \cite[Chapter~II, Section~4.1]{Fu1}.

For the proof of (\ref{geq-2}) one can certainly assume that $c_g(F)>0$, or equivalently  $c_\alpha(F)>0$, cf.\ \cite[Lemma~2.6]{Z}, for if not then there is no $\alpha$-reg\-ular point of $F$. There is also no loss of generality in assuming $c_\alpha(D^c)>0$ since otherwise the relation in question reduces to~(\ref{eq-reg2}).

Assume first that $F=K$ is compact; then $\gamma_{K,g}\in\mathcal E^+_\alpha(K)$ by the latter part of Lemma~\ref{l-hen}.
Consider the $\alpha$-Riesz equilibrium measure $\gamma_{K,\alpha}$ on $K$, and write
\[\chi:=\gamma_{K,\alpha}+(\gamma_{K,g}^A)^K.\]
According to Lemma~\ref{l-hatg} we get from relation (\ref{geq-1}) $\kappa_\alpha\gamma_{K,g}=1+\kappa_\alpha\gamma_{K,g}^A$ n.e.\ on $K$. When combined with (\ref{sec2-2}) and (\ref{eq-bala-f1}) this yields
\[\kappa_\alpha\chi=\kappa_\alpha\gamma_{K,\alpha}+\kappa_\alpha(\gamma_{K,g}^A)^K=\kappa_\alpha\gamma_{K,g}\quad\text{n.e.\ on \ }K.\]
Having observed that $\chi$ and $\gamma_{K,g}$ are both of the class $\mathcal E^+_\alpha(K)$ we thus have $\chi=\gamma_{K,g}$ by
\cite[p.~178, Remark]{L}, and consequently $\kappa_\alpha\chi=\kappa_\alpha\gamma_{K,g}$ everywhere on $\mathbb R^n$, in particular on $K\setminus K_{I,\alpha}$. Applying Corollaries~\ref{cor-bal-reg} and~\ref{reg-com} to $\kappa_\alpha(\gamma_{K,g}^A)^K$ and $\kappa_\alpha\gamma_{\alpha,K}$, respectively, we obtain from the last display
\[\kappa_\alpha\gamma_{K,g}=1+\kappa_\alpha\gamma_{K,g}^A\quad\text{everywhere on \ }K\setminus K_{I,\alpha},\]
which yields (\ref{geq-2}) for $F=K$ compact.

To establish (\ref{geq-2}) for $F$ relatively closed in $D$, consider an increasing sequence of compact sets $K_i\subset F$ such that $\bigcup_i K_i=F$. Then by (\ref{geq-1}), \[1=g\gamma_{K_i,g}=g\gamma_{K_{i+1},g}=g\gamma_{F,g}\quad\text{n.e.\ on \ }K_i,\]
which according to the $g$-domination principle (cf.\ Theorem~\ref{th-dom-pr} and Remark~\ref{rem-pr}) yields
\[g\gamma_{K_i,g}\leqslant g\gamma_{K_{i+1},g}\leqslant g\gamma_{F,g}\quad\text{everywhere on \ }D,\]
and consequently
\begin{equation}\label{greg1}g\gamma_{F,g}(x)\geqslant1\quad\text{for every $\alpha$-regular \ }x\in F.\end{equation}
On the other hand, $\gamma_{K_{i+1},g}\in\Gamma_{K_i}$, and application of \cite[Lemma~4.1.1]{Fu1} gives
\[\|\gamma_{K_{i+1},g}-\gamma_{K_i,g}\|_g^2\leqslant\|\gamma_{K_{i+1},g}\|_g^2-\|\gamma_{K_i,g}\|_g^2.\]
The sequence $\gamma_{K_i,g}\subset\mathcal E^+_g(F)$, $i\in\mathbb N$, is thus Cauchy, and it converges in $\mathcal E_g$ strongly and vaguely to $\gamma_{F,g}$ (see \cite[Proof of Theorem~4.1]{Fu1}). Therefore, by (\ref{geq-2}) applied to $K_i$ and Lemma~\ref{lemma-semi},
\[g\gamma_{F,g}(x)\leqslant\lim_{i\to+\infty}\,g\gamma_{K_i,g}(x)=1\quad\text{for every $\alpha$-regular \ }x\in F.\]
When combined with inequality (\ref{greg1}) this leads to (\ref{geq-2}).\end{proof}

\begin{theorem} \label{cor-mass''} {\rm(Principle of positivity of mass)} For\/ $\mu,\nu\in\mathfrak M^+(D)$ such that\/ $g^\alpha_D\mu\geqslant g^\alpha_D\nu$ everywhere on\/ $D$ we have\/ $\mu(D)\geqslant\nu(D)$.
\end{theorem}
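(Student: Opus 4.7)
My plan is to mirror the proof of Theorem \ref{cor-mass'}, replacing the Riesz equilibrium measures of closed balls by $\alpha$-Green equilibrium measures of a suitable compact exhaustion of~$D$. The goal is to construct positive measures $\gamma_k \in \mathcal E_g^+$ whose $g$-potentials $g\gamma_k$ increase pointwise to the constant $1$ on all of~$D$; granted this, integrating the hypothesis $g\mu \geqslant g\nu$ against $\gamma_k$ and invoking monotone convergence will yield $\nu(D) \leqslant \mu(D)$ almost mechanically.

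The key preparatory step is to exhaust $D$ by an increasing sequence of compact sets $F_k \subset F_{k+1}^\circ$ with $\bigcup_k F_k = D$ such that every point of every $F_k$ is $\alpha$-regular. The union-of-closed-cubes construction from the footnote in the proof of Theorem \ref{th-cons} supplies exactly such $F_k$, since any point of a closed cube is $\alpha$-regular. Each $F_k$ is compact with $c_g(F_k) \leqslant c_\alpha(F_k) < +\infty$ (because $g \leqslant \kappa_\alpha$ on $D \times D$), so Theorem \ref{th-equi} furnishes an $\alpha$-Green equilibrium measure $\gamma_k := \gamma_{F_k, g} \in \mathcal E_g^+(F_k)$, and from (\ref{geq-2}) we have the sharp equality $g\gamma_k \equiv 1$ on $F_k$. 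Since $F_k \subset F_{k+1}$ and any $\alpha$-regular point of $F_k$ remains $\alpha$-regular for the larger set $F_{k+1}$, it follows that $g\gamma_{k+1} = 1 = g\gamma_k$ on $F_k$, hence $\gamma_k$-a.e.; the $g$-domination principle (Theorem \ref{th-dom-pr}, Remark \ref{rem-pr}) then gives $g\gamma_k \leqslant g\gamma_{k+1}$ throughout~$D$, and combined with $g\gamma_k \leqslant 1$ and the fact that every $x \in D$ eventually lies in some $F_k$, this forces $g\gamma_k(x) \uparrow 1$ for every $x \in D$.

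With this in hand, the conclusion is immediate. Tonelli's theorem, applicable because $g \geqslant 0$ (Lemma \ref{lem-posit'}) and is symmetric, gives
\[\int g\gamma_k \, d\nu = \int g\nu \, d\gamma_k \leqslant \int g\mu \, d\gamma_k = \int g\gamma_k \, d\mu,\]
and letting $k \to +\infty$, monotone convergence on each side produces $\nu(D) \leqslant \mu(D)$. The only real point of care is arranging the exhaustion so that each $F_k$ contains only $\alpha$-regular points, because that is what delivers the sharp equality $g\gamma_k \equiv 1$ on $F_k$ (rather than merely n.e.) and hence the monotone convergence to the constant $1$ on all of~$D$; everything else is a direct parallel to the Riesz case.
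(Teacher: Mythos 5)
Your proof is correct and follows the paper's own argument essentially verbatim: exhaust $D$ by an increasing sequence of compact sets free of $\alpha$-irregular points (the cube construction from the footnote to the proof of Theorem~\ref{th-cons}), take their $\alpha$-Green equilibrium measures, use (\ref{geq-2}) together with the $g$-domination principle to get $g\gamma_k\uparrow 1$ pointwise on $D$, and finish by symmetry of $g$ (Tonelli) and monotone convergence. One small slip in a side justification: since $g\leqslant\kappa_\alpha$ on $D\times D$ the capacity inequality actually goes the other way, $c_g(F_k)\geqslant c_\alpha(F_k)$; the finiteness of $c_g(F_k)$ that you need follows instead from the fact that the l.s.c.\ kernel $g$, being strictly positive, is bounded below by a positive constant on the compact set $F_k\times F_k$.
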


\begin{proof} Likewise as in the proof of Theorem~\ref{th-cons} (see the footnote therein), one can choose an increasing sequence of compact sets $K_i$ without $\alpha$-ir\-reg\-ular points such that $\bigcup_i K_i=D$. Having denoted by $\gamma_i$ the $g$-equilibrium measure on $K_i$ we have $1=g\gamma_i=g\gamma_{i+1}$ everywhere on $K_i$, cf.\ (\ref{geq-2}), and by the $g$-domination principle (cf.\ Theorem~\ref{th-dom-pr} and Remark~\ref{rem-pr}), $g\gamma_i\leqslant g\gamma_{i+1}$
on all of $D$. The rest of the proof runs in a way similar to that in the proof of Theorem~\ref{cor-mass'}.\end{proof}

{\sl Acknowledgements.} The authors thank to Krzysztof Bogdan for useful comments to the paper. The second named author expresses her sincere gratitude to the Department of Mathematical Sciences of the University of Copenhagen for providing a conducive research atmosphere during her stay when part of this manuscript was written.

\end{document}